\newtheorem{definition}{Definition}
\newtheorem{lemma}{Lemma}
\newtheorem{theorem}{Theorem}
\def\wh{\widehat}
\def\wt{\widetilde}
\def\x{\times}
\def\op{\oplus}
\def\Span{\mathrm{span}}        
\def\mod{\mathrm{mod}}        
\def\re{\mathrm{e}}        
\def\<{\leqslant}           
\def\>{\geqslant}           
\def\rT{\mathrm{T}}
\def\rd{\mathrm{d}}
 \newcommand{\bone}{{\bf 1}}
\def\mC{\mathbb{C}}
\def\bF{\mathbf{F}}
\def\bE{\mathbf{E}}
\def\bP{\mathbf{P}}
\def\cQ{\mathcal{Q}}
\def\cW{\mathcal{W}}
\def\cD{\mathcal{D}}
\def\cN{\mathcal{N}}
\def\cE{\mathcal{E}}
\def\cB{\mathcal{B}}
\def\cM{\mathcal{M}}
\def\cL{\mathcal{L}}
\def\bA{\mathbf{A}}
\def\bD{\mathbf{D}}
\def\cI{\mathcal{I}}
\def\cK{\mathcal{K}}
\def\Re{\mathrm{Re}}
\def\Im{\mathrm{Im}}
\def\mN{\mathbb{N}}
\def\cP{\mathcal{P}}
\def\mR{\mathbb{R}}
\def\cS{\mathcal{S}}
\def\mZ{\mathbb{Z}}
\def\eps{\varepsilon}
\def\phi{\varphi}
\def\mes{\mathrm{mes}}
\def\Tr{\mathrm{Tr}}
\def\lfp{\{\!\!\{}
\def\rfp{\}\!\!\}}
\def\sn{| \! | \! |}
\begin{document}
\title{\bf
Quantized linear systems on integer lattices: \\ a frequency-based approach\footnote{This work is a slightly edited version of two research reports: I.Vladimirov, ``Quantized linear systems on integer lattices: frequency-based approach'', Parts I, II, Centre for Applied Dynamical Systems and Environmental Modelling, CADSEM Reports 96--032, 96--033, October 1996, Deakin University, Geelong, Victoria, Australia, which were issued while the author was with the Institute for Information Transmission Problems, the Russian Academy of Sciences, Moscow, 127994, GSP--4, Bolshoi Karetny Lane 19. None of the original results have been removed, nor have new results been added in the present version except for a numerical example on p.~58 of the last section.
}}
\author{Igor G. Vladimirov\thanks{Present address: UNSW Canberra, ACT 2600, Australia. E-mail: {\tt
         igor.g.vladimirov@gmail.com}.}
}
\date{}

\maketitle
\thispagestyle{empty}

\centerline{\emph{In loving memory of my father, Gennadiy Ivanovich Vladimirov (21.04.1933 -- 24.12.2014)}}

\begin{abstract}
The roundoff errors in computer simulations of continuous dynamical systems, which are caused by finiteness of machine arithmetic, can lead to qualitative discrepancies between phase portraits of the resulting spatially discretized systems and the original systems. These effects can be modelled on a multidimensional integer lattice by using a dynamical system    obtained by composing the transition operator of the original system with a quantizer representing the computer discretization procedure. Such model systems manifest pseudorandomness which can be studied using a rigorous  probability theoretic approach.  To this end, the lattice $\mathbb{Z}^n$ is endowed with a class of frequency measurable subsets and a spatial frequency functional as a finitely additive probability measure describing the limit fractions of such sets in large rectangular fragments of the lattice. Using a multivariate version of
Weyl's equidistribution  criterion and a related nonresonance condition,  we introduce an algebra of frequency  measurable quasiperiodic subsets of the lattice. The frequency-based approach is applied to quantized linear systems  with the transition operator $R \circ L$, where $L$ is a nonsingular matrix of the original linear system in $\mathbb{R}^n$, and $R:\mR^n\to \mZ^n$ is a quantizer (in an idealized fixed-point arithmetic with no overflow) which commutes with the additive group of translations of the lattice. It is shown that, for almost every $L$, the events associated with the deviation of trajectories of the quantized  and original systems are frequency measurable quasiperiodic subsets of the lattice whose frequencies are amenable to computation involving  geometric probabilities on finite-dimensional tori.  Using the skew products of measure preserving toral automorphisms, we prove
mutual independence and uniform distribution of the quantization errors and investigate statistical properties of invertibility loss for the quantized linear system, thus extending the results obtained by V.V.Voevodin in the 1960s. In the case when $L$ is similar to an orthogonal matrix, we establish a functional central limit theorem for the deviations of trajectories of the quantized and original systems. As an example, these results are demonstrated for rounded-off planar rotations.
\end{abstract}




\newpage
\tableofcontents

\section{Introduction}

The roundoff errors in computer simulations of continuous dynamical systems, which are  caused by finiteness of machine arithmetic, can lead to dramatic discrepancies between phase portraits of the resulting spatially discrete systems and the original systems \cite{Blank,Kozyakin,Kuznetsov}. A model class of these spatially discretized systems is provided by dynamical systems on multidimensional integer lattices,  obtained by composing the transition operator of the original system with a quantizer  which represents the computer discretization procedure \cite{Diamond1,Vivaldi1,Vivaldi2}. Trajectories of such model systems exhibit pseudorandomness. This  suggests that their statistical properties can be studied from a probability theoretic point of view  by equipping the integer lattice  with a probability measure so as to quantify the discretization effects for a sufficiently wide class of dynamical systems.

The present study is concerned with those ``events'' on the lattice which are associated with the deviation of trajectories of the discretized and original systems. We develop a rigorous probabilistic approach to quantized linear systems on the $n$-dimensional integer lattice $\mZ^n$ with the transition operator $R \circ L$, where $L$ is a nonsingular matrix of the original linear system in $\mR^n$, and the map $R$ is a quantizer in an idealized fixed-point arithmetic with no overflow.
To this end, the lattice $\mZ^n$ is endowed with a class of frequency measurable subsets and a spatial frequency functional as a finitely additive probability measure on such sets. We introduce an algebra of quasiperiodic subsets of the lattice, which are frequency  measurable under a nonresonance  condition, and apply  this frequency-based  approach to the quantized linear systems.   We show that almost every matrix $L$ satisfies the nonresonance condition, and the events associated with the deviation of trajectories of the quantized  and original systems over a bounded time interval are representable as frequency measurable quasiperiodic subsets of the lattice. The frequencies of these sets are amenable to computation which involves geometric probabilities on finite-dimensional tori.  In particular, for the generic matrices $L$, satisfying the nonresonance condition, we prove the mutual independence and uniform distribution of quantization errors, and investigate the statistical properties of the invertibility loss for the transition operator. In the case where $L$ is similar to an orthogonal matrix, this allows a functional central limit theorem to be established for the deviations of trajectories of the quantized and original systems. As an illustration, we apply these results to a two-dimensional quantized linear system associated with the problem of rounded-off planar rotations.

Given the length of this report, we will now outline its structure and main results. In Section~\ref{PSIL}, the $n$-dimensional integer lattice $\mZ^n$ is endowed with a \emph{frequency} functional $\bF$. The frequency $\bF(A)$ of a set $A \subset \mZ^n$ is defined as the limit fraction of  points of the set in unboundedly increasing rectangular fragments of the lattice. The class $\cS$ of \textit{frequency measurable sets}, for which this limit exists, is closed under the union of disjoint sets and the complement of a set to the lattice, and $\bF$ is a \emph{finitely} additive probability measure on the class $\cS$ os such subsets of the lattice. The frequency $\bF$ is defined in Section \ref{FMS} as an \emph{average value} of the indicator function of a set, with the average value functional  $\bA$ on a class of \emph{averageable functions} being introduced in Section~\ref{AF}. Since $\cS$ is not a $\sigma$-algebra, and $\bF$ is not a \emph{countably} additive measure, the triple $(\mZ^n, \cS, \bF)$ can be regarded as  an unusual probability space (where the average value functional $\bA$ plays the role of expectation) which does not satisfy Kolmogorov's axioms \cite{Shiryayev}.  Nevertheless, this  probability space allows a random element with values in a metric space $X$ to be defined as a map $g: \mZ^n\to X$ for which there exists a countably additive probability measure $D$ such that the preimage $g^{-1}(B)$ of every $D$-continuous \cite{Billingsley} Borel set $B \subset X$ is frequency measurable with frequency $\bF(g^{-1}(B)) = D(B)$.  Any such map $g$ is called $D$-\emph{distributed} and generates an algebra $\cS_g \subset \cS$ of events related to the behaviour of $g$.  The distributed maps are discussed in Section~\ref{DM}. The averagability of a function, the frequency measurability of a set and the property of a map to be distributed are closely related to each other and can be formulated in terms of the weak convergence of probability measures in metric spaces \cite{Billingsley}.

Section~\ref{QOIL} introduces a class of \emph{quasiperiodic} objects which are nontrivial examples of averageable functions, frequency measurable sets and  distributed maps. Section~\ref{QS} describes quasiperiodic subsets of the lattice $\mZ^n$.  Any such set is specified by a Jordan measurable set $G \subset [0,1)^m$ and a matrix $\Lambda \in \mR^{m \x n}$, and is denoted by $Q_m(G,\Lambda)$. The set $Q_m(G,\Lambda)$ is formed by those points $x \in \mZ^n$ for which the $m$-dimensional vector $\Lambda x$, whose entries are considered modulo one, belongs to $G$. The resulting class $\cQ_m(\Lambda)$ of $\Lambda$-quasiperiodic sets is an algebra with respect to set theoretic operations. Furthermore, Section \ref{FMQS}  shows that if the matrix $\Lambda$ is \emph{nonresonant} in the sense that the rows of the matrix $\begin{bmatrix}I_n \\ \Lambda\end{bmatrix}$  are rationally independent (with $I_n$ the identity matrix of order $n$), then every $\Lambda$-quasiperiodic set $Q_m(G,\Lambda)$ is frequency measurable and its frequency coincides with the $m$-dimensional Lebesgue measure $\mes_m G$ of the set $G$. Similarly, a $\Lambda$-quasiperiodic function on  $\mZ^n$ is defined as the composition $f \circ \Lambda$ of the linear map, specified by a matrix $\Lambda \in \mR^{m \x n}$, and a $\mes_m$-continuous bounded function $f: \mR^m \to \mR$, unit periodic in its $m$ variables. In a similar vein, Section~\ref{DQM} defines a $\Lambda$-quasiperiodic map from $\mZ^n$ to a metric space $X$ as the composition $g \circ \Lambda$ of the linear map $\Lambda$ with a unit periodic $\mes_m$-continuous map $g: \mR^m \to X$. It is shown that if the matrix $\Lambda$ is nonresonant,  then every $\Lambda$-quasiperiodic function $f \circ \Lambda$ is averageable, with $\bA(f \circ \Lambda) = \int_{[0,1]^m} f(u)\rd u$, and any $\Lambda$-quasiperiodic map $g \circ \Lambda$ is distributed, with the algebra $\cS_{g \circ \Lambda}$ consisting of of $\Lambda$-quasiperiodic sets. It turns out that $\mes_{mn}$-almost all matrices $\Lambda \in \mR^{m \x n}$ are nonresonant, and the corresponding algebras $\cQ_m(\Lambda)$ of $\Lambda$-quasiperiodic sets are parameterized by Jordan measurable subsets of $[0,1)^m$. Therefore, a typical matrix $\Lambda$ leads to sufficiently rich classes of $\Lambda$-quasiperiodic frequency measurable sets, averageable functions and distributed maps, which can be studied in the framework of the probability space $(\mZ^n, \cS, \bF)$. The above properties of quasiperiodic objects are established by using a multivariate version of what is known as the method of trigonometric sums in number theory \cite{Vinogradov} and Weyl's equidistribution criterion in the theory of weak convergence of probability measures \cite{Billingsley}. The quasiperiodic sets are then extended to infinite dimensional matrices $\Lambda$. More precisely, for a matrix $\cL \in \mR^{\infty \x n}$, the class $\cQ_{\infty}(\cL)$ of $\cL$-quasiperiodic sets is defined as the union of  the algebras $\cQ _m(\Lambda)$ over all positive integers $m$ and all submatrices $\Lambda \in \mR^{m \x n}$ of the matrix $\cL$. The class $\cQ _{\infty}(\cL)$ is also an algebra and consists of frequency measurable subsets of the lattice $\mZ^n$, provided  $\cL\in \mR^{\infty\x n}$ is nonresonant in the sense that all its submatrices $\Lambda\in \mR^{m\x n}$ are nonresonant. The discussion of quasiperiodic objects in Sections~\ref{QS}--\ref{DQM} employs the  notion of a \emph{cell} in $\mR^m$, which extends that of a space-filling polytope~\cite{Conway} and is given in Section~\ref{Cells}.

Section~\ref{QLSGC}   applies the results of Section~\ref{QOIL} to a frequency-based probabilistic analysis  of \emph{quantized linear systems}, which is the main theme of the present report. A quantized linear system is defined in Section~\ref{DQLS} as a dynamical system in $\mZ^n$  with the transition operator $T := R\circ L$, where $L \in \mR^{n \x n}$ is a nonsingular matrix, and $R: \mR^n \to \mZ^n$ is a \emph{quantizer} which commutes with the additive group of translations of $\mZ^n$ and such that  $R^{-1}(0)$ is a Jordan measurable set. An example of a quantizer is the \emph{roundoff operator} $R_*$ with $R_*^{-1}(0) = [-1/2, 1/2)^n$. The quantized linear $(R,L)$-system is a model of the spatial discretization of a linear dynamical system in fixed-point arithmetic with no overflow. Section~\ref{ABFA} defines an \emph{associated algebra} $\cQ $ of $\cL$-quasiperiodic subsets of $\mZ^n$, with the matrix $\cL \in \mR^{\infty \x n}$ formed from nonzero integer powers of $L$. By splitting $\cL$ into two submatrices $\cL^-$ and $\cL^+$ formed from negative  and positive integer powers of $L$, respectively, we define a \emph{backward algebra}  $\cQ ^-$ of $\cL^-$-quasiperiodic subsets of $\mZ^n$ and a \emph{forward algebra} $\cQ ^+$ of $\cL^+$-quasiperiodic sets. It turns out that the associated algebra $\cQ $ is invariant with respect to both the transition operator $T$ and its set-valued inverse $T^{-1}$, and that the backward and forward algebras $\cQ ^-$ and $\cQ ^+$ are invariant with respect to $T$ and $T^{-1}$, respectively. Section~\ref{PFFA} shows that a $\mes_{n^2}$-almost
every nonsingular matrix $L \in \mR^{n \x n}$ is \emph{iteratively nonresonant} in the sense that its positive and negative integer powers form a nonresonant matrix
$\cL:= (L^k)_{k\in\mZ\setminus \{0\}}
\in \mR^{\infty \x n}.
$ Hence, for any such matrix $L$, the associated algebra $\cQ $ (including its subalgebras
$\cQ ^-$ and $\cQ ^+$) consists of frequency measurable subsets of the lattice $\mZ^n$. Moreover, it turns out that for an iteratively  nonresonant matrix $L$,   the corresponding
transition operator $T$  is not only $\cQ ^+$-measurable,
 but also preserves the frequency $\bF$ on the forward algebra $\cQ^+$,
thus allowing the quadruple $(\mZ^n, \cQ ^+, \bF,
T)$ to be regarded as a dynamical system with an invariant finitely additive probability measure $\bF$.
Section~\ref{IUDQE} is concerned with \emph{quantization errors} which are defined as the compositions $E_k := E \circ T^{k-1}$ for $k = 1, 2, 3, \ldots$,  where the first quantization error $E$ maps a point $x \in \mZ^n$ to the vector $E(x)  := Lx - T(x)
\in R^{-1}(0)$. It is shown that, under the condition that the matrix $L$ is iteratively nonresonant,
the quantization errors $E_1, E_2, E_3, \ldots$ are mutually independent,
uniformly distributed on the set $R^{-1}(0)$ and are $\cQ ^+$-measurable in the sense that for any positive integer $N$ and any
Jordan measurable set $G \subset (R^{-1}(0))^N$, the following set is an element of the forward algebra $\cQ^+$ and its frequency is $\mes_{Nn} G$:
\begin{equation}
    \label{intro1}
    \big\{
        x \in \mZ^n:\
        (
            E_k(x)
        )_{1 \< k \< N}
        \in
        G
    \big\}
    \in \cQ ^+.
\end{equation}
These properties of the quantization errors, which extend V.V.Voevodin's results of \cite{V_1967}, are closely related to the property that the transition operator $T$ is not only $\cQ ^+$-measurable and preserves the frequency $\bF$ on the forward algebra $\cQ ^+$,  but is also mixing. Hence, the quadruple $(\mZ^n, \cQ ^+, \bF, T)$ can be regarded as an ergodic dynamical system. Using the property that the quantization errors are uniformly distributed over $R^{-1}(0)$, we define a \emph{supporting} dynamical system in $\mR^n$ with the affine transition operator $T_*(x) := Lx - \mu$,  where $\mu := \int_{R^{-1}(0)} u\rd u$ is the  mean vector of the uniform distribution.  It is more convenient to study the deviation of trajectories of the quantized linear system from those of the supporting system whose phase portrait is qualitatively similar to that of the original linear system. Section~\ref{DDPSQLSS} shows that the vector $ (T_*^k(x) - T^k(x))_{1 \< k \< N} \in \mR^{Nn} $, which describes  the deviation of \emph{positive} semitrajectories of the quantized linear and the supporting systems during the first $N$ steps of their evolution from a common starting point $x \in \mZ^n$, is  an affine function of the quantization errors $E_1(x), \ldots, E_N(x)$, thus clarifying the role of the latter.  Moreover, the sequence $(T_*^k(x) - T^k(x))_{k \> 1}$, driven by the quantization errors, is  a homogeneous Markov chain on the probability space $(\mZ^n, \cQ ^+, \bF)$.
Therefore, in view of the results of Sections~\ref{IUDQE}
and~\ref{DDPSQLSS}, for any iteratively nonresonant matrix $L$, the events, which pertain to the deviation of
positive semitrajectories of the discretized system and the original linear
system in a finite number of steps of their evolution, are representable as
frequency measurable quasiperiodic subsets of the lattice in (\ref{intro1})   which
belong to the forward algebra $\cQ ^+$.

In contrast to the original linear system, the nonlinear transition operator $T$ is, in general,  neither surjective nor injective. There are ``holes'' $x \in \mZ^n$ in the lattice, which are not reachable for $T$, that is, $T^{-1}(x) = \emptyset$, and there also exist points $x \in \mZ^n$ for which the set $T^{-1}(x)$ consists of more than two elements. Therefore, the negative semitrajectory $(T^{-k}(x))_{k \> 1}$ of the quantized linear system  is a set-valued sequence with values from the class of finite (possibly, empty) subsets of the lattice. Section~\ref{DDNSQLSS} defines a \emph{compensating system}
as the quantized linear $(\wt{R}, L^{-1})$-system
with the transition operator $\wt{T}:= \wt{R}\circ L^{-1}$, where the quantizer
$\wt{R}$ is given by $\wt{R}(u) := R(u + (I_n
+ L^{-1}) \mu)$. This allows the preimage
$T^{-N}(x)$ to be represented as the Minkowski sum $\wt{T}^N(x) + \Sigma_N(x)$
of the singleton $\wt{T}^N(x)$ and a finite (possibly, empty)
set $\Sigma_N(x) \subset \mZ^n$. The set $\Sigma_N(x)$ is completely determined
by the quantization errors $\wt{E}_1(x), \ldots, \wt{E}_N(x)$
for the compensating system which are defined similarly to the
quantization errors of the quantized linear $(R,L)$-system.
Under the assumption that the matrix $L$ is iteratively nonresonant, the
quantization errors of the compensating system are  mutually
independent, uniformly distributed on the set
$\wt{R}^{-1}(0)$ and $\cQ ^-$-measurable in the sense that, for any
positive integer $N$ and any Jordan measurable set $H
\subset (\wt{R}^{-1}(0))^N$, the following set is an element of the backward algebra $\cQ^-$ and its frequency is $\mes_{Nn} H$:
\begin{equation}
\label{intro2}
    \big\{
        x \in \mZ^n:\
        (
            \wt{E}_k(x)
        )_{1 \< k \< N}
        \in
        H
    \big\}
    \in
    \cQ ^-.
\end{equation}
Due to these properties, the set-valued sequence $(T_*^{-k}(x) - T^{-k}(x))_{k \> 1}$ consists of the Min\-kow\-ski sums of the corresponding elements of the  sequence $(T_*^{-k}(x) - \wt{T}^k(x))_{k \> 1}$ and the set-valued sequence $(- \Sigma_k(x))_{k \> 1}$, with the last two sequences being homogeneous Markov chains on the probability space $(\mZ^n, \cQ ^-, \bF)$. As a corollary, a martingale property \cite{Shiryayev} is established for the sequence $(|\det L|^k \# T^{-k}(x))_{k \> 1}$ which describes the cardinality structure of the set-valued negative semitrajectory of the quantized linear system. In view of the results of Section~\ref{DDNSQLSS}, under the assumption that the matrix $L$ is iteratively nonresonant, any event, pertaining to the deviation of negative semitrajectories of the discretized and linear systems in a finite number of steps of their evolution, is representable as a frequency measurable quasiperiodic subset of the lattice in (\ref{intro2}) which belongs to the backward algebra $\cQ ^-$.

Therefore, Section~\ref{QLSGC} can be summarized as follows. The phase portrait of the original linear system with a nonsingular matrix $L \in \mR^{n \x n}$ is distorted when the system is replaced with the quantized linear $(R,L)$-system. This distortion can be described in terms of the deviations of positive and negative semitrajectories of the quantized linear system from those of the original linear system. For an iteratively nonresonant matrix $L$ (such matrices are typical), these deviations can be studied in the framework of the probability space $(\mZ^n, \cQ , \bF)$, since all the events, related to the deviations in a finite number of steps of system evolution, are representable as frequency measurable quasiperiodic  subsets of the lattice belonging to the associated algebra $\cQ$.

Section~\ref{QLSNC} considers a particular class of \emph{neutral} quantized linear $(R,L)$-systems on the lattice $\mZ^n$ of even dimension $n = 2r$, with a matrix $L$ being similar to an orthogonal matrix with a nondegenerate spectrum. This class of systems is specified in Section~\ref{CQLSBC}. The special structure of the matrix $L$ is used in Section~\ref{FCLTDPSQLSS} in order to establish a functional central limit theorem for the deviation of positive semitrajectories of the quantized linear system and the supporting system. Section~\ref{ERPR} applies the above results to quantitative analysis of the phase portrait of a two-dimensional neutral quantized linear system with which the problem on rounded-off planar rotations is concerned \cite{Blank,Diamond1,Kuznetsov}.  We compute the frequencies of some events pertaining to the phase portrait of the system and compare these theoretical predictions with a numerical experiment on a moderately large fragment of the lattice.

The results of this report (some of them were partially announced in \cite{Kozyakin}) can be used for rigorous justification  of ad hoc models \cite{Diamond2} for spatial discretizations of dynamical systems.  The apparatus of frequency measurable quasiperiodic sets, proposed in the present study,  is also applicable to quantitative analysis of aliasing structures \cite{Izmailov1,Izmailov2}.

\section{A probability structure on the integer lattice}
\label{PSIL}
\setcounter{equation}{0}

\subsection{Averageable functions}
\label{AF}

In what follows,  $\mZ^n$ denotes the integer lattice in the real
Euclidean space $\mR^n$ of $n$-dimensional column-vectors with the
standard inner product $x^{\rT} y$ and Euclidean norm $|x| :=
\sqrt{x^{\rT} x}$, where $(\cdot)^{\rT}$ is the transpose. Let $M$ denote
the linear space of bounded functions $f: \mZ^n \to \mR$ with
the uniform norm
$$
    \| f \| :=
    \sup_{x \in \mZ^n} |f(x)|.
$$
Denoting the class of nonempty finite subsets of the lattice $\mZ^n$ by
\begin{equation}
\label{totPi}
    \Pi
    :=
    \left\{
        P \subset \mZ^n:\
        0 <  \# P < +\infty
    \right\},
\end{equation}
with $\#(\cdot)$ the number of elements in a set, we define a functional $W: M \x \Pi \to \mR$  which maps a function $f\in M$ to its average value over a set $P \in \Pi$:
\begin{equation}
\label{WfP}
    W(f, P)
    :=
    \frac{1}{\# P}
    \sum_{x \in P} f(x).
\end{equation}
This quantity is also representable as the expectation $\bE f(\xi)$, where $\xi$ is a random vector with uniform probability distribution over the set $P$. For any vectors $a := (a_k)_{1 \< k \< n} \in \mZ^n$ and $\ell := (\ell_k)_{1 \< k \< n} \in \mN^n$, with $\mN$ the set of positive integers, let
\begin{equation}
\label{Pal}
    P_{a,\ell}
    :=
    \big\{
        x
        :=
        (x_k)_{1 \< k \< n} \in \mZ^n:\
        a_k \< x_k < a_k+\ell_k\
        {\rm for\ all}\
        1 \< k \< n
    \big\}
\end{equation}
denote a discrete parallelepiped which consists of  $\prod_{k=1}^{n} \ell_k$ points of the lattice. For any given $N \in \mN$, we also define a class
\begin{equation}
\label{calPN}
    \cP_N
    :=
    \left\{
        P_{a,\ell}:\
        a, \ell \in \mZ^n,\
        \min_{1 \< k \< n} \ell_k \> N
    \right\}
\end{equation}
of sufficiently large parallelepipeds  whose edge lengths are bounded below by $N$. These sets form a decreasing sequence: $\cP_1 \supset \cP_2 \supset \ldots$, with the class
\begin{equation}
\label{Pfilter}
    \bP
    :=
    \{
        \cP_N:\ N \in \mN
    \}
\end{equation}
specifying a topological filter base on the set $\Pi$ in (\ref{totPi}). For any function $f\in M$,  consider the lower and upper
limits of the average $W(f,P)$ in (\ref{WfP}) as the set $P$ tends to infinity in the sense of (\ref{Pfilter}):
\begin{align}
\label{lowerlim}
    \bA_*(f)
    & :=
    \liminf_{P \nearrow \infty}
    W(f,P)
    :=
    \sup_{N \> 1} \inf_{P \in \scriptsize{\cP}_N} W(f,P),\\
\label{upperlim}
    \bA^*(f)
    & :=
    \limsup_{P \nearrow \infty}
    W(f,P)
    :=
    \inf_{N \> 1} \sup_{P \in \scriptsize{\cP}_N} W(f,P).
\end{align}

\begin{definition}
\label{averfun}
For a given function $f: \mZ^n \to \mR$, the limits (\ref{lowerlim}) and (\ref{upperlim}) are called the lower and upper average values of the function, respectively. If these limits coincide, their common value is written as $\bA(f)$ and is called the  average value of $f$, and the function is called averageable.
\end{definition}
Therefore, the average value of an averageable function $f$ is the limit $
\bA(f) := \lim_{P \nearrow \infty} W(f,P) $. Constants are trivial
examples of averageable functions. In particular, the identically unit function
$\bone: \mZ^n \to \{1\}$ is averageable, and its average value is $\bA(\bone) = 1$. Less
trivial averageable functions will be studied in Section~\ref{QOIL}.

\begin{lemma}
\label{averfunprop}
\begin{itemize}
\item[{}]
\item[{\bf (a)}]
The functionals $\bA_*, \bA^*: M \to \mR$ are concave and convex on
the space $M$, are monotone with respect to the set $M^+$ of
nonnegative-valued bounded functions, are positively homogeneous and
satisfy a Lipschitz condition, \begin{equation}\label{Lipschitz}
    |\bA_*(f)-\bA_*(g)| ,\,
    |\bA^*(f)-\bA^*(g)|
    \< \bA^*(|f-g|)
    \< \|f-g\|;
\end{equation}
\item[{\bf (b)}]
the class of averageable functions  $\cM$ is a linear subspace of
$M$;
\item[{\bf (c)}]
the functional $\bA:  \cM \to \mR$ is linear, positive~\cite{Kras} with respect to the set $\cM^+$ of nonnegative-valued averageable functions and satisfies $ |\bA(f)| \< \bA^*(|f|) \< \|f\| $.
\end{itemize}
\end{lemma}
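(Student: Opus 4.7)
The strategy is to reduce everything to properties of the elementary averaging functional $W(\cdot, P)$ from (\ref{WfP}) for each fixed $P \in \Pi$, combined with standard properties of $\liminf$ and $\limsup$ along the filter base $\bP$ from (\ref{Pfilter}). For any $P \in \Pi$, the map $f \mapsto W(f,P)$ is a linear, monotone, positively homogeneous functional on $M$ satisfying $W(\bone, P) = 1$ and $|W(f,P)| \< \|f\|$. All properties asserted for $\bA_*, \bA^*, \bA$ will be obtained by passing to the limit along $\bP$.

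For part \textbf{(a)}, concavity of $\bA_*$ will follow from the linearity of $W(\cdot, P)$ together with the superadditivity $\liminf_P (u_P + v_P) \> \liminf_P u_P + \liminf_P v_P$, and convexity of $\bA^*$ from the dual subadditivity of $\limsup$. Monotonicity on $M^+$ is immediate since $W(f, P) \> 0$ whenever $f \> 0$. Positive homogeneity comes from $W(\alpha f, P) = \alpha W(f, P)$ for $\alpha \> 0$, as $\liminf$ and $\limsup$ commute with multiplication by nonnegative scalars. For the Lipschitz bound (\ref{Lipschitz}), the key identity is $W(f,P) \< W(g,P) + W(|f-g|, P)$; applying $\liminf_{P \nearrow \infty}$ and using $\liminf_P (u_P + v_P) \< \liminf_P u_P + \limsup_P v_P$ yields $\bA_*(f) \< \bA_*(g) + \bA^*(|f-g|)$, and symmetry in $f, g$ gives the two-sided estimate; the analogous argument handles $\bA^*$. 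The rightmost estimate $\bA^*(|f-g|) \< \|f-g\|$ follows from monotonicity applied to $|f-g| \< \|f-g\|\, \bone$ together with $\bA^*(\bone)=1$.

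For parts \textbf{(b)} and \textbf{(c)}, if $f, g \in \cM$ and $\alpha, \beta \in \mR$, then from the identity $W(\alpha f + \beta g, P) = \alpha W(f, P) + \beta W(g, P)$ and the fact that the scalar limits $\lim_{P \nearrow \infty} W(f, P) = \bA(f)$ and $\lim_{P \nearrow \infty} W(g, P) = \bA(g)$ exist along the filter, the linear combination has a well-defined limit equal to $\alpha \bA(f) + \beta \bA(g)$; hence $\cM$ is a linear subspace and $\bA: \cM \to \mR$ is linear. Positivity of $\bA$ on $\cM^+$ follows directly from the monotonicity of $\bA_*$ established in (a), since $\bA(f) = \bA_*(f) \> \bA_*(\bzero) = 0$ for $f \in \cM^+$. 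The inequality $|\bA(f)| \< \bA^*(|f|)$ is obtained either from (\ref{Lipschitz}) with $g = \bzero$, or directly from $-|f| \< f \< |f|$, linearity of $\bA$, and monotonicity of $\bA^*$; the final bound $\bA^*(|f|) \< \|f\|$ is the same monotonicity/homogeneity argument as above.

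The only mild subtlety is choosing the correct inequality direction when passing to $\liminf$ or $\limsup$ on a sum or difference; in particular, the Lipschitz estimate requires the mixed bound $\liminf(u_P + v_P) \< \liminf u_P + \limsup v_P$ rather than the (false in general) additivity of $\liminf$. Beyond this, the argument is routine and essentially amounts to transporting, term by term, the linear, monotone, positive and Lipschitz properties of $W(\cdot, P)$ through the filter limit.
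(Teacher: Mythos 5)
Your proof is correct and follows essentially the same approach as the paper: the paper's own proof is a one-line observation that $W(\cdot,P)$ is linear, positive, and satisfies $|W(f,P)| \< W(|f|,P) \< \|f\|$, from which the lemma "follows". You have correctly unpacked that sketch, passing each property through the filter limit with the appropriate $\liminf/\limsup$ inequalities, including the mixed bound $\liminf(u_P+v_P) \< \liminf u_P + \limsup v_P$ needed for the Lipschitz estimate.
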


\begin{proof}  From (\ref{WfP}), it follows that for any given nonempty finite set $P
\subset \mZ^n$, the functional $W(\cdot, P): M \to \mR$ is linear,
positive with respect to the cone $M^+$ and satisfies $ |W(f,P)| \<
W(|f|, P) \< \|f\| $. These properties  imply the assertion of the
lemma. \end{proof}

The average value functional $\bA$ is extended in a standard fashion  to the complex
linear space $\cM + i \cM$ as $ \bA(f) := \bA(\Re f) + i \bA(\Im f)$.
Furthermore, if $f: \mZ^n \to X$ is a map with values in an
$r$-dimensional real linear space $X$, given by $ f := \sum_{k=1}^{r}
f_k b_k $, where $f_k: \mZ^n \to \mR$ are averageable functions and
$b_1, \ldots, b_r$ is a basis in $X$, then the corresponding
average value is defined to be $ \bA(f) := \sum_{k=1}^{r} \bA(f_k) b_k
$. This definition does not depend on a particular choice of the
basis in $X$.

\subsection{Frequency measurable sets}
\label{FMS}

For any set $A \subset \mZ^n$, let
\begin{align}
\label{lowerfreq}
    \bF_*(A) & :=
    \bA_*(\cI_A)  =
    \liminf_{P \nearrow \infty}
    \frac{\#\left(A \bigcap P\right)}{\# P},\\
\label{upperfreq}
    \bF^*(A) & :=  \bA^*(\cI_A)  =
    \limsup_{P \nearrow \infty}
    \frac{\#\left(A \bigcap P\right)}{\# P}
\end{align}
denote the lower and upper average values of the indicator function
$
    \cI_A(x) :=
    \left\{
        \begin{matrix}
        1 & {\rm for} & x \in A \\ 0 & {\rm for} &
        x \not\in A
        \end{matrix}
    \right.
$ of the set.
In (\ref{lowerfreq}) and (\ref{upperfreq}), use is made of the equality
\begin{equation}
\label{ratio}
    W
    (
        \cI_A, P
    ) =
    \frac{\# (A \bigcap P)}{\# P}
\end{equation}
for the relative fraction of points of the set $A$ in (a discrete parallelepiped) $P$,
which follows from (\ref{WfP}). Note that
$$
    0 \< \bF_*(A) =
    1- \bF^* (\mZ^n \setminus A)
    \< \bF^*(A) \< 1.
$$

\begin{definition}\label{averset}
For a given set $A \subset \mZ^n$, the quantities $\bF_*(A)$ and $\bF^*(A)$ in (\ref{lowerfreq}) and (\ref{upperfreq}) are called the lower and upper frequencies of the set, respectively. If they coincide, then their common value is denoted by $\bF(A)$ and is called the frequency of $A$, and the set is called frequency measurable.
\end{definition}
Therefore, the frequency measurability of a set $A$ is equivalent to the averagability of its indicator function $\cI_A$. The frequency $\bF(A)$ of a frequency measurable set $A$ is the limit of the ratio (\ref{ratio}) in the sense of (\ref{Pfilter})--(\ref{upperlim}):
$$
    \bF(A)
    :=
    \lim_{P \nearrow \infty}
    \frac
    {\#(A \bigcap P)}{\# P}.
$$
That is, $\bF(A)$ is the limit fraction of points of a given set $A$ in unboundedly increasing rectangular fragments of the lattice.
A straightforward example of a frequency measurable set is $\mZ^n$, with $\bF(\mZ^n) = 1$. Also note that any finite subset of $\mZ^n$ has zero frequency. A class of nontrivial frequency measurable sets, relevant to the context of spatial discretizations of  dynamical systems, will be considered in Section~\ref{QOIL}.

\begin{lemma}\label{aversetprop}
\begin{itemize}
\item[{}]
\item[{\bf (a)}]
For any disjoint sets $A, B \subset \mZ^n$, their lower and upper frequencies satisfy the inequalities
$$
    \bF_*(A \bigcup B)
    \> \bF_*(A)+\bF_*(B),
    \qquad
    \bF^*(A \bigcup B)
    \< \bF^*(A)+\bF^*(B).
$$  Furthermore, $
    \bF_*(A) \< \bF_*(B)$ and
$
    \bF^*(A) \< \bF^*(B)
$ hold for any sets $A \subset B \subset \mZ^n$;
\item[{\bf (b)}]
the class $\cS$ of frequency measurable subsets of $\mZ^n$
contains the lattice and is closed under the union of disjoint sets
and  complement of a set to the lattice;
\item[{\bf (c)}]
the frequency functional  $\bF: \cS \to [0,1]$ is a finitely additive probability
measure,  which satisfies $\bF(\mZ^n) = 1$ and
$
    \bF(A \bigcup B) =
    \bF(A)  + \bF(B)
$ for any disjoint sets $A, B \in \cS$;
\item[{\bf (d)}]
if the symmetric difference $A \triangle B$ of sets  $A, B \subset
\mZ^n$ is frequency measurable with zero frequency $\bF(A \triangle B) = 0$, then
$A$ is frequency measurable if and only if so is $B$, with $\bF(A) = \bF(B)$ in the
case of frequency measurability.
\end{itemize}
\end{lemma}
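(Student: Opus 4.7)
\proof The plan is to reduce every claim to the corresponding statement in Lemma~\ref{averfunprop} by means of the identities
$$
    \cI_{A \cup B} = \cI_A + \cI_B \quad (\text{for disjoint } A, B),
    \qquad
    \cI_{\mZ^n \setminus A} = \bone - \cI_A,
    \qquad
    |\cI_A - \cI_B| = \cI_{A \triangle B},
$$
together with the trivial monotonicity $A \subset B \Longrightarrow \cI_A \< \cI_B$.

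For part (a), I would first note that the concavity of $\bA_*$ combined with its positive homogeneity implies superadditivity, $\bA_*(f+g) \> \bA_*(f) + \bA_*(g)$, and convexity plus positive homogeneity of $\bA^*$ implies subadditivity $\bA^*(f+g) \< \bA^*(f) + \bA^*(g)$. Applying these to $f = \cI_A$, $g = \cI_B$ and using $\cI_{A \cup B} = \cI_A + \cI_B$ for disjoint $A, B$ yields the two inequalities for $\bF_*(A \cup B)$ and $\bF^*(A \cup B)$. Monotonicity of $\bF_*$ and $\bF^*$ in the set argument follows from the monotonicity of $\bA_*, \bA^*$ on $M^+$ asserted in Lemma~\ref{averfunprop}(a).

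Parts (b) and (c) are then straightforward. Since $\bone$ is averageable with $\bA(\bone) = 1$ and $\cM$ is a linear subspace of $M$ by Lemma~\ref{averfunprop}(b), the identity $\cI_{\mZ^n \setminus A} = \bone - \cI_A$ shows that $\cS$ is closed under complementation in $\mZ^n$, while $\cI_{A \cup B} = \cI_A + \cI_B$ for disjoint $A, B \in \cS$ shows that $\cS$ is closed under disjoint unions. The lattice itself is frequency measurable since $\cI_{\mZ^n} = \bone$. Finite additivity of $\bF$ in part (c) is obtained by applying the linearity of $\bA$ on $\cM$ from Lemma~\ref{averfunprop}(c) to the same identity $\cI_{A \cup B} = \cI_A + \cI_B$, and $\bF$ takes values in $[0,1]$ because of the elementary bounds $0 \< \bF_*(A) \< \bF^*(A) \< 1$ already recorded above Definition~\ref{averset}.

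Part (d) is the one that requires the slightly stronger Lipschitz bound~(\ref{Lipschitz}) from Lemma~\ref{averfunprop}(a). Setting $f := \cI_A$, $g := \cI_B$ in (\ref{Lipschitz}) and using $|\cI_A - \cI_B| = \cI_{A \triangle B}$ yields
$$
    |\bF_*(A) - \bF_*(B)| \< \bF^*(A \triangle B),
    \qquad
    |\bF^*(A) - \bF^*(B)| \< \bF^*(A \triangle B).
$$
If $A \triangle B \in \cS$ with $\bF(A \triangle B) = 0$, then $\bF^*(A \triangle B) = 0$, which forces $\bF_*(A) = \bF_*(B)$ and $\bF^*(A) = \bF^*(B)$. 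Hence the equality $\bF_*(A) = \bF^*(A)$ holds if and only if $\bF_*(B) = \bF^*(B)$, and in that case the common value is the same. The only step requiring a modicum of care is the verification of the Lipschitz inequality (\ref{Lipschitz}) itself, but this has already been established in Lemma~\ref{averfunprop}(a) and can be invoked directly; there is no genuine obstacle in the present lemma.
\endproof
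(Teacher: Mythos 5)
Your proof is correct and follows essentially the same route as the paper: reduce each claim to Lemma~\ref{averfunprop} via the indicator identities $\cI_{A\cup B}=\cI_A+\cI_B$ (disjoint case), $\cI_{\mZ^n\setminus A}=\bone-\cI_A$, and $\cI_{A\triangle B}=|\cI_A-\cI_B|$, invoking concavity/convexity with positive homogeneity for (a), linearity of $\cM$ and $\bA$ for (b)--(c), and the Lipschitz bound (\ref{Lipschitz}) for (d). You merely spell out the super/subadditivity step that the paper leaves implicit; there is no substantive difference.
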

\begin{proof}  The assertion (a) of the lemma follows from the concavity, convexity, monotonicity and positive homogeneity of the functionals $\bA_*$ and $\bA^*$ (see the assertion (a) of Lemma~\ref{averfunprop}) and from the equality \begin{equation}\label{sumofind} \cI_{A \bigcup  B} = \cI_A + \cI_B \end{equation} for any disjoint sets $A, B \subset \mZ^n$. The assertions (b) and (c) of the lemma follow from the linearity of the space $\cM$ and of the functional $\bA$ (see the assertions (b) and (c) of Lemma~\ref{averfunprop}), and from the equality (\ref{sumofind}) for disjoint sets. The assertion (d) of the lemma is established by using the inequalities
$$
    |\bF_*(A) -\bF_*(B)|, \ |\bF^*(A) -\bF^*(B)| \< \bF^*(A \triangle B)
$$ which follow from (\ref{Lipschitz}) and from the identity $\cI_{A \triangle B} = |\cI_A - \cI_B|$, thus completing the proof of the lemma.  \end{proof}

The assertions (b) and (c) of Lemma~\ref{aversetprop} show that the triple $(\mZ^n,\cS,\bF)$ can be regarded as a probability space, with the average value functional $\bA: \cM \to \mR$ playing the role of expectation.  In view of Lemma~\ref{aversetprop}(d), this probability space is complete. However, this space does not satisfy the axiomatics of A.N.Kolmogorov \cite{Shiryayev} since $\cS$ is not a $\sigma$-algebra and $\bF$ is not a countably additive measure. Despite this pathology, frequency measurable sets and the frequency functional on them possess geometric properties  which correspond to those of Lebesgue measurable sets and the Lebesgue measure in $\mR^n$.

 \begin{theorem}\label{aversetprop1}
For any set $A \in \cS $, any nonsingular matrix $F \in \mZ^{n \x n}$ and any vector $z \in \mZ^n$, the set $FA + z$ is also frequency measurable, with
 \begin{equation}\label{FFAz}
    \bF(FA+z) =
    \frac{\bF(A)}{|\det F|}.
 \end{equation}
\end{theorem}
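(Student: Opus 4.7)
The plan is to dispose of the translation first, decompose $F$ via the Smith normal form into an easy diagonal part and a harder unimodular part, and then chain everything together. First I would observe that translation by $z \in \mZ^n$ leaves frequency invariant: since $\#((A+z) \cap P_{a,\ell}) = \#(A \cap P_{a-z,\ell})$ and the reindexing $a \mapsto a - z$ is a bijection on $\mZ^n$, the class $\cP_N$ from (\ref{calPN}) is preserved, so the lower and upper limits (\ref{lowerlim})--(\ref{upperlim}) for $\cI_{A+z}$ and $\cI_A$ agree. This reduces the claim to proving $\bF(FA) = \bF(A)/|\det F|$.

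Next I would write $F = UDV$ in Smith normal form with unimodular $U, V \in \GL(n,\mZ)$ and $D = \mathrm{diag}(d_1,\ldots,d_n)$, $d_k \in \mN$, so that $|\det D| = |\det F|$. For the diagonal factor, a direct coordinatewise calculation shows that $\{x \in \mZ^n : Dx \in P_{a,\ell}\} = P_{a',\ell'}$ with $\ell'_k \in \{\lfloor \ell_k/d_k\rfloor, \lceil \ell_k/d_k\rceil\}$; consequently $\min_k \ell'_k \to \infty$ and $\#P_{a',\ell'}/\#P_{a,\ell} \to 1/|\det D|$ as $\min_k \ell_k \to \infty$, whence
$$
    W(\cI_{DA},P_{a,\ell})
    =
    W(\cI_A,P_{a',\ell'}) \cdot \frac{\#P_{a',\ell'}}{\#P_{a,\ell}}
    \longrightarrow
    \frac{\bF(A)}{|\det D|}.
$$

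The main obstacle lies in the unimodular case $\bF(UA) = \bF(A)$. Since $U^{-1} \in \mZ^{n\x n}$, one has $\#(UA \cap P_{a,\ell}) = \#(A \cap U^{-1}P_{a,\ell})$, but now $U^{-1}P_{a,\ell}$ consists of $\prod_k \ell_k$ integer points in the \emph{skewed} parallelepiped $\Omega := U^{-1}([a_1,a_1+\ell_1) \x \cdots \x [a_n,a_n+\ell_n)) \subset \mR^n$, so frequency measurability of $A$ cannot be applied directly. For $\eps > 0$, I would choose $N$ so that $|W(\cI_A,P') - \bF(A)| < \eps$ for every $P' \in \cP_N$, tile $\mR^n$ by axis-aligned cubes of side $N$, and split these into cubes entirely contained in $\Omega$ versus cubes meeting its boundary. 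The surface area of $\Omega$ is $O(\sum_i \prod_{j\neq i}\ell_j)$ with constants depending on $U$, so the boundary strip of width $O(N)$ has Lebesgue volume $O(N \sum_i \prod_{j\neq i}\ell_j) = o(\prod_k\ell_k)$, and a standard geometry-of-numbers bound controls the integer points in that strip by the same order. Applying the frequency estimate on the interior cubes and absorbing the boundary contribution gives $W(\cI_{UA},P_{a,\ell}) \to \bF(A)$.

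Chaining $A \mapsto VA \mapsto DVA \mapsto UDVA = FA$ through the unimodular, diagonal, and unimodular cases, and then combining with the translation step, yields (\ref{FFAz}). The only nontrivial geometric input is the boundary-strip estimate for the skewed parallelepiped, which is where the unimodular case differs qualitatively from the diagonal one and where the care is required.
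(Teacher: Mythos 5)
Your proposal is correct but takes a genuinely different route from the paper. The paper never factors $F$: it handles an arbitrary nonsingular $F\in\mZ^{n\x n}$ directly by partitioning $\mZ^n$ into the discrete $\nu$-cubes $P_\alpha^{(\nu)}$ of (\ref{Palfnu}), comparing $\#\big((FA)\cap P\big)$ to the sums of $\#\big(A\cap P_\alpha^{(\nu)}\big)$ over the cubes whose $F$-image is contained in, respectively meets, $P$, and then pinning down the two counts $\nu^n\#\Phi_\nu(P)$ and $\nu^n\#\Psi_\nu(P)$ via the explicit Minkowski-sum containments (\ref{inc1})--(\ref{inc2}), which turn the boundary error into a clean volume bound involving $\sn F\sn$. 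Your Smith normal form $F = UDV$ instead localizes the difficulty: the diagonal factor is dispatched by a one-line coordinate-wise count, and all the geometric work is pushed into the unimodular case, where $U^{-1}P_{a,\ell}$ is a skewed fundamental domain with the same volume and the same number of lattice points as $P_{a,\ell}$, which simplifies the bookkeeping somewhat. But the unimodular step is, in substance, the same covering-and-boundary-strip estimate that the paper performs directly for general $F$, so the decomposition reframes the hard part rather than avoiding it. The one soft spot is the phrase ``a standard geometry-of-numbers bound'': what you actually need, and should state, is that the union of axis-aligned $N$-cubes meeting a hyperplane face of $U^{-1}\wt{P}_{a,\ell}$ contains $O\big(N\prod_{j\ne i}\ell_j\big)$ integer points with a constant depending only on $U$ and $n$ (a cube-counting argument, not anything deep); the paper's (\ref{inc1})--(\ref{inc2}) deliver exactly this estimate in a ready-to-use form without appealing to surface-area heuristics, which is arguably the tidier path.
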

\begin{proof}
We will first verify the equality (\ref{FFAz}) for the identity matrix $F = I_n$ of order $n$. This is equivalent to that the translated set $A + z$ inherits frequency measurability  from $A \in \cS$, with
\begin{equation}\label{FAz}
    \bF(A+z) = \bF(A)
\end{equation}
for any $z \in \mZ^n$. Note that $ P_{a,\ell} + z = P_{a+z, \ell} $ for any $a \in \mZ^n$ and $\ell \in \mN^n$, where $P_{a,\ell}$ is the discrete parallelepiped defined by (\ref{Pal}). Hence, each of the classes $\cP_N$ in (\ref{calPN}) is invariant under translations of the constituent parallelepipeds:
$$
    \cP_N + z := \{ P + z:\ P \in \cP_N\} = \cP_N.
$$
In combination with the identities $ \#((A+z) \bigcap P) = \#(A \bigcap (P-z))$ and $\# (P-z) = \# P$, the translation invariance implies that
\begin{align*}
    \inf_{P \in \scriptsize{\cP}_N}
    \frac{\#((A+z) \bigcap P)}{\# P} & =
    \inf_{P \in \scriptsize{\cP}_N}
    \frac{\#(A \bigcap P)}{\# P},\\
    \sup_{P \in \scriptsize{\cP}_N}
    \frac{\#((A+z) \bigcap P)}{\# P} & =
    \sup_{P \in \scriptsize{\cP}_N}
    \frac{\#(A \bigcap P)}{\# P}.
\end{align*}
From the arbitrariness of $N \in \mN$ in the last two equalities and
from the  assumption that $A \in \cS$, it follows that $ \bF_*(A+z) =
\bF^*(A+z) = \bF(A) $, which implies (\ref{FAz}).
Therefore, it now remains to prove the property (\ref{FFAz}) for $z = 0$.
More precisely, we will show that the set $FA$ is frequency measurable and
\begin{equation}
    \label{FFA}
    \bF(FA)
    =
    \frac{\bF(A)}{|\det F|}
\end{equation}
for any $A \in \cS$ and nonsingular $F \in \mZ^{n \x n}$. To this end, for every $\nu \in \mN$ and $\alpha := (\alpha_k)_{1 \< k \< n} \in \mZ^n$, consider a discrete cube
\begin{equation}
    \label{Palfnu}
    P_{\alpha}^{(\nu)}
    :=
    \left\{
        x :=(x_k)_{1 \< k \< n}\in \mZ^n:\
        \nu \alpha_k \< x_k < \nu(\alpha_k +1)\
        {\rm for\ all}\
        1 \< k \< n
    \right\}
\end{equation}
which consists of $\nu^n$ points. For any given $\nu$, the cubes $P_{\alpha}^{(\nu)}$ belong to the class $\cP_{\nu}$ in (\ref{calPN}) and form a partition of $\mZ^n$. Therefore, the sets $ A_{\nu, \alpha} = A \bigcap P_{\alpha}^{(\nu)} $,\ $ \alpha \in \mZ^n $, form a partition of the set $A \subset \mZ^n$, and for any nonempty finite set $P \subset \mZ^n$,
\begin{equation}\label{inout}
    F \bigcup_{\alpha \in \Phi_{\nu}(P)} A_{\nu,\alpha}
    \subset
    (FA) \bigcap P
    \subset
    F \bigcup_{\alpha \in \Psi_{\nu}(P)} A_{\nu, \alpha},
\end{equation}
where
$$
    \Phi_{\nu}(P)
    :=
    \left\{
        \alpha \in \mZ^n:\
        F P_{\alpha}^{(\nu)} \subset P
    \right\}
    \subset
    \Psi_{\nu}(P)
    :=
    \left\{
        \alpha \in \mZ^n:\
        (F P_{\alpha}^{(\nu)})
        \bigcap P \ne \emptyset
    \right\}.
$$
In view of the assumption that $\det F \ne 0$, it follows from (\ref{inout}) that
\begin{equation}\label{auxineq}
    \nu^n
    \# \Phi_{\nu}(P) \inf_{Q \in \scriptsize{\cP}_{\nu}} \frac{\# (A
    \bigcap Q)}{\# Q}
    \<
    \# ((F A) \bigcap P)
    \<
    \nu^n
    \#
    \Psi_{\nu}(P) \sup_{Q \in \scriptsize{\cP}_\nu}
    \frac{\# (A \bigcap Q)}{\# Q}.
\end{equation}
We will now prove  that the limits
\begin{equation}\label{auxlimits}
    \lim_{P \nearrow \infty}
    \frac{\nu^n \# \Phi_{\nu}(P)}{\# P} =
    \lim_{P \nearrow \infty}
    \frac{\nu^n \# \Psi_{\nu}(P)}{\# P} =
    \frac{1}{|\det F|}
\end{equation}
in the sense of (\ref{Pfilter})--(\ref{upperlim}) hold for any $\nu \in \mN$. To this end, consider the Minkowski sums $ \wt{P}_{a,\ell} := P_{a,\ell} + V $ and $ \wt{P}_{\alpha}^{(\nu)} := P_{\alpha}^{(\nu)} + V $ of the discrete parallelepipeds $P_{a,\ell}, P_{\alpha}^{(\nu)} \subset \mZ^n$ with the cube $ V := [-1/2,1/2)^n \subset \mR^n$. By using the matrix norm $ \sn F \sn := \max_{1 \< j \< n} \sum_{k=1}^{n} |f_{jk}| $, which is  induced by the Chebyshev norm in $\mR^n$, with $f_{jk}$ denoting the entries of the matrix $F$, it follows that
\begin{equation}\label{inc1}
    \wt{P}_{a,\ell}\ \ {}^{\underline{*}}\ \ ((2\nu-1)
    \sn
    F \sn +1)  V )
    \subset
    \bigcup_{\alpha \in \Phi_{\nu}(P_{a,\ell})}
    ( F \wt{P}_{\alpha}^{(\nu)})
\end{equation}
and
\begin{equation}\label{inc2}
     \bigcup_{\alpha \in \Psi_{\nu}(P_{a,\ell})}
    ( F \wt{P}_{\alpha}^{(\nu)} ) \subset
    \wt{P}_{a,\ell} + ((2\nu -1) \sn F \sn - 1) V
\end{equation}
where
$
    G \ ^{\underline{*}} \  H =
    \{
        u \in \mR^n:\
        u + H \subset G
    \}
$ denotes  the Minkowski subtraction of sets $G, H \subset \mR^n$. The
inclusions (\ref{inc1}) and (\ref{inc2}) imply that
\begin{align*}
    \inf_{P \in \cP_N}
    \frac{\nu^n \# \Phi_{\nu}(P)}{\# P}
    & \>
    \frac{1}{|\det F|} \left(1 - \frac{(2\nu-1) \sn
    F \sn +1}{N}\right)^n,\\
    \sup_{P \in \cP_N}
    \frac{\nu^n \# \Psi_{\nu}(P)}{\# P} & \<  \frac{1}{|\det F|}
    \left(1 + \frac{(2\nu -1) \sn F \sn - 1}{N}\right)^n.
\end{align*}
These inequalities lead to the limits in (\ref{auxlimits}). Now, from (\ref{auxineq}) and (\ref{auxlimits}), it follows that for any $\nu \in \mN$,
$$
    \frac{1}{|\det F|} \inf_{Q \in \scriptsize{\cP}_\nu}
    \frac{\# (A \bigcap
    Q)}{\# Q}
     \< \bF_*(FA) \< \bF^*(FA) \<
    \frac{1}{|\det F|} \sup_{Q \in \scriptsize{\cP}_\nu}
    \frac{\# (A \bigcap
    Q)}{\# Q},
$$
and hence,
$$
    \frac{\bF_*(A)}{|\det F|}  \< \bF_*(FA) \<
\bF^*(FA) \< \frac{\bF^*(A)}{|\det F|}.
$$
The last inequalities and the assumption $A \in \cS$ imply (\ref{FFA}), thereby completing the proof of the theorem. \end{proof}

A geometric interpretation of (\ref{FFAz}) is that the frequency functional $\bF$ is translation invariant, and a linear transformation of a frequency measurable set $ A$ with an integer matrix $F$, satisfying $|\det F|>1$, leads to a ``sparser'' set $FA$.

\subsection{Distributed  maps}
\label{DM}

Suppose $X$  is a metric space equipped with the Borel $\sigma$-algebra
$\cB$, and let $D$ be a countably additive probability measure on
the measurable  space $(X, \cB)$. Recall that a map $g: X \to Y$
with values in another metric space $Y$ is said to be $D$-continuous
\cite{Billingsley} if the discontinuity set of $g$ has zero
$D$-measure. Accordingly, a set $B \in \cB$ is called $D$-continuous
if its boundary $\partial B$ (which is the set of discontinuity points of the indicator function $\cI_B$  of the set $B$) satisfies $D(\partial B) = 0$.

\begin{definition}
\label{distribution}
Suppose $g:\mZ^n \to X$ is a map with values in a metric space $X$, and  $D$ is a countably additive probability measure on $(X,\cB)$. The map $g$ is said to be $D$-distributed if the preimage
$$
    g^{-1}(B) :=
    \left\{
        x \in \mZ^n:\ g(x) \in B
    \right\}
$$
of any $D$-continuous set $B \in \cB$ under the map $g$ is frequency measurable and its frequency satisfies
\begin{equation}
\label{calFg-1B}
    \bF
    (
        g^{-1}(B)
    ) =
    D(B).
\end{equation}
In this case, the algebra
$$
     \cS_g
     :=
     \left\{
        g^{-1}(B):\
        B \in \cB,\
        D(\partial B) = 0
     \right\},
$$
which consists of frequency measurable sets, is referred to as the algebra generated  by the map $g$.
\end{definition}

\begin{lemma}
\label{distrcriteria}
Suppose $g: \mZ^n \to X$ is a map with values in a metric space $X$, and let $D$ be a countably additive probability measure on $(X,\cB)$. Then the following properties are equivalent:
\begin{itemize}
\item[{\bf (a)}]
    for any bounded continuous function $f: X \to \mR$, the composition
    $f \circ g: \mZ^n \to \mR$  is averageable, with
    \begin{equation}
    \label{AfgD}
        \bA(f\circ g)
        =
        \int_X
        f(x)
        D(\rd x);
    \end{equation}
\item[{\bf (b)}]
    for any bounded $D$-continuous function $f:  X \to \mR$, the
    composition $f \circ g$  is averageable  and (\ref{AfgD}) holds;
\item[{\bf (c)}]
    the map $g$ is $D$-distributed.
\end{itemize}
\end{lemma}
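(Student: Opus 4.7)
The plan is to run the classical Portmanteau cycle (b) $\Rightarrow$ (a) $\Rightarrow$ (c) $\Rightarrow$ (b), with the sequential $\liminf/\limsup$ replaced by the functionals $\bA_*$ and $\bA^*$ associated with the filter base $\bP$. The implication (b) $\Rightarrow$ (a) is immediate, since every bounded continuous $f: X \to \mR$ has empty discontinuity set and is therefore $D$-continuous.

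For (a) $\Rightarrow$ (c), fix a Borel set $B \subset X$ with $D(\partial B) = 0$; since $\partial B = \overline{B} \setminus B^{\circ}$, this forces $D(\overline{B}) = D(B^{\circ}) = D(B)$. Using the metric $d$ on $X$, for each $\eps > 0$ I would put
\[
    \phi_\eps(y) := \min\bigl(1, d(y, X \setminus B^{\circ})/\eps\bigr),
    \qquad
    \psi_\eps(y) := \max\bigl(0, 1 - d(y, \overline{B})/\eps\bigr),
\]
which are bounded continuous functions on $X$ satisfying $\phi_\eps \< \cI_B \< \psi_\eps$; they converge pointwise to $\cI_{B^{\circ}}$ and $\cI_{\overline{B}}$ respectively, so by dominated convergence $\int_X \phi_\eps(u) D(\rd u) \to D(B^{\circ}) = D(B)$ and $\int_X \psi_\eps(u) D(\rd u) \to D(\overline{B}) = D(B)$. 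Applying (a) to $\phi_\eps$ and $\psi_\eps$ and using the monotonicity of $\bA_*$, $\bA^*$ from Lemma~\ref{averfunprop}(a) gives
\[
    \int_X \phi_\eps(u) D(\rd u)
    = \bA(\phi_\eps \circ g)
    \< \bF_*(g^{-1}(B))
    \< \bF^*(g^{-1}(B))
    \< \bA(\psi_\eps \circ g)
    = \int_X \psi_\eps(u) D(\rd u),
\]
and sending $\eps \downarrow 0$ yields $\bF_*(g^{-1}(B)) = \bF^*(g^{-1}(B)) = D(B)$, which is (\ref{calFg-1B}).

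For (c) $\Rightarrow$ (b), let $f: X \to \mR$ be bounded and $D$-continuous with $\sup_X |f| \< C$. The pushforward measure $\mu(\cdot) := D(f^{-1}(\cdot))$ on $\mR$ has at most countably many atoms, so given $\delta > 0$ I can choose a partition $-C = c_0 < c_1 < \cdots < c_K$ with $c_K > C$, $\max_k (c_k - c_{k-1}) < \delta$, and $\mu(\{c_k\}) = 0$ for every $k$. Each level set $B_k := f^{-1}([c_{k-1}, c_k))$ is then a $D$-continuity set, because at any $x \in \partial B_k$ where $f$ is continuous one must have $f(x) \in \{c_{k-1}, c_k\}$, so $\partial B_k$ lies inside the union of the discontinuity set of $f$ (of $D$-measure zero by $D$-continuity of $f$) and $f^{-1}(\{c_{k-1}, c_k\})$ (of $D$-measure zero by choice of the $c_k$). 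Hypothesis (c) and the linearity of $\bA$ on $\cM$ from Lemma~\ref{averfunprop}(c) then make the simple approximant $f_\delta := \sum_{k=1}^{K} c_{k-1} \cI_{B_k}$ satisfy $\bA(f_\delta \circ g) = \sum_{k=1}^{K} c_{k-1} D(B_k) = \int_X f_\delta(u) D(\rd u)$. The uniform bound $\sup_X |f - f_\delta| \< \delta$ and the Lipschitz inequality (\ref{Lipschitz}) give both $|\bA_*(f \circ g) - \bA(f_\delta \circ g)|$ and $|\bA^*(f \circ g) - \bA(f_\delta \circ g)|$ bounded by $\delta$, and similarly $|\int_X(f - f_\delta)(u) D(\rd u)| \< \delta$; letting $\delta \downarrow 0$ proves (b).

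I expect the principal technical point to be the sandwich construction in (a) $\Rightarrow$ (c), where the hypothesis $D(\partial B) = 0$ is what collapses the inner and outer continuous approximants of $\cI_B$ onto the common $D$-mass $D(B)$; the level-set argument in (c) $\Rightarrow$ (b) is by comparison a routine simple-function approximation once the $D$-continuity of each $B_k$ has been established from the boundary containment described above.
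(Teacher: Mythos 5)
Your proposal is correct in substance, and it takes a genuinely different route from the paper. The paper's own proof is essentially a reduction: it introduces, for each finite $P$, the pushforward measure $U_P\circ g^{-1}$ on $(X,\cB)$, observes via (\ref{expUPinvg}) and (\ref{UPinvgB}) that $W(f\circ g,P)$ and $\#(g^{-1}(B)\cap P)/\#P$ are the integral of $f$ and the mass of $B$ under this measure, and then invokes the "well-known criteria" for weak convergence of probability measures --- i.e.\ the Portmanteau theorem, transported to the filter base topology $\bD$ --- to conclude that (a), (b), (c) are equivalent. You instead prove the Portmanteau cycle from scratch, adapted to the functionals $\bA_*$, $\bA^*$: the sandwich of $\cI_B$ between the Urysohn-type approximants $\phi_\eps,\psi_\eps$ for (a)$\Rightarrow$(c), and the simple-function approximation along $D$-continuity level sets for (c)$\Rightarrow$(b). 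The trade-off is the expected one: the paper's argument is shorter because it delegates the nontrivial implications to a black box, while yours is self-contained and makes the role of $D(\partial B)=0$ explicit in pinching the two continuous approximants onto a common value. Both are legitimate; yours is closer to a first-principles treatment and is a perfectly acceptable substitute.

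One small slip worth flagging in (c)$\Rightarrow$(b): you fix $c_0=-C$ and simultaneously demand $\mu(\{c_k\})=0$ for \emph{every} $k$, but if $D(f^{-1}(\{-C\}))>0$ these two requirements are incompatible. This causes no harm because the endpoint $c_0$ never actually needs the no-atom property: since $f\geqslant -C$ everywhere, any continuity point $x$ of $f$ with $f(x)=c_0=-C$ has a neighbourhood mapped into $[c_0,c_1)$, so $x\in B_1^{\circ}$ and hence $x\notin\partial B_1$; thus $\partial B_1\cap f^{-1}(\{c_0\})$ is already contained in the discontinuity set of $f$ and is $D$-null for free. (Likewise $c_K>C$ gives $f^{-1}(\{c_K\})=\emptyset$, so the top endpoint is also automatic.) Either restrict the no-atom requirement to $k=1,\ldots,K-1$, or take $c_0$ strictly below $-C$; with either fix the argument is airtight.
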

\begin{proof}  For any nonempty finite set $P \subset \mZ^n$, consider a
countably additive probability measure $U_P$ on $(\mZ^n, 2^{\mZ^n})$
defined by
$$
    U_P(A)
    :=
    W(\cI_A, P)
    =
    \frac
    {\# (A \bigcap P)}
    {\# P},
$$
where use is made of (\ref{WfP}).
Any such set $P$ and any map $g:  \mZ^n \to X$ generate a countably
additive probability measure  $U_P \circ g^{-1}$ on $(X, \cB)$. The
expectation of a Borel measurable function $f: X \to \mR$, interpreted
as a random variable on the probability space $(X,\cB,U_P \circ g^{-1})$, takes the form
\begin{equation}
\label{expUPinvg}
    \int_X
    f(x)
    (
        U_P \circ g^{-1}
    )(\rd x)
    =
    W(f \circ g, P).
\end{equation}
In particular, application of the equality (\ref{expUPinvg})  to the indicator function $f= \cI_B$ of a set $B \subset
X$ yields
\begin{equation}
\label{UPinvgB}
    (
        U_P \circ g^{-1}
    )(B) =
    \frac{
    \#
    (
        g^{-1}(B) \bigcap P
    )}{\# P}.
\end{equation}
Now, we assume the map $g$ to be fixed but otherwise arbitrary, and, similarly to (\ref{calPN}) and
(\ref{Pfilter}), define a  topological filter base
\begin{equation}\label{Dfilter}
    \bD
    :=
    \{
        \cD_N:\
        N \in \mN
    \},
    \qquad
    \cD_N := \{ U_P
\circ g^{-1}:\ P \in \cP_N\},
\end{equation}
on the class of countably additive probability measures on $(X, \cB)$. The existence of a limit $D$ in the sense of (\ref{Dfilter}) in the topology  of weak convergence \cite{Billingsley} of probability measures on $(X, \cB)$ means that
\begin{itemize}
    \item[{\bf (a')}]
    for any bounded continuous function $f: X \to \mR$,
    \begin{equation}\label{limexpUPinvg}
        \lim_{P \nearrow \infty}
        \int_{X}
        f(x)
        (
            U_P \circ g^{-1}
        )(\rd x) =
        \int_{X}
        f(x)
        D(\rd x).
    \end{equation}
\end{itemize}
By the well-known criteria for the weak convergence, the
last property is equivalent to each of the following ones:
\begin{itemize}
\item[{\bf (b')}]
the convergence (\ref{limexpUPinvg}) holds for any bounded
$D$-continuous function $f: X \to \mR$;
\item[{\bf (c')}]
for any $D$-continuous set $B \in \cB$,
\begin{equation}\label{limUPinvgB}
    \lim_{P \nearrow \infty}
    (
        U_P \circ g^{-1}
    )(B) =
    D(B).
\end{equation}
\end{itemize}
The properties (a')--(c') are equivalent to the corresponding
properties (a)--(c) stated in the lemma. Indeed, in view of
(\ref{expUPinvg}), the left-hand side of (\ref{limexpUPinvg}) is
$\bA(f \circ g)$, and by (\ref{UPinvgB}), the left-hand side of
(\ref{limUPinvgB}) is $\bF(g^{-1}(B))$. Hence, the equivalence of
the properties (a')--(c') implies that (a)--(c) are also equivalent to each other, thereby
completing the proof of the lemma. \end{proof}

From the proof of Lemma~\ref{distrcriteria}, it follows that for any
map $g:  \mZ^n \to X$ with values in a metric space $X$, there
exists at most one countably additive probability measure $D$ on
$(X,\cB)$ such that $g$ is $D$-distributed.

\begin{lemma}\label{mapcomposition}
Suppose $X_1$ and $X_2$ are two metric spaces with Borel
$\sigma$-algebras $\cB_1$ and $\cB_2$, respectively. Also, let $g_1: \mZ^n \to X_1$ and $g_2: X_1 \to X_2$ be two maps which satisfy the following
conditions:
\begin{itemize}
\item[{\bf (a) }]
$g_1$ is $D_1$-distributed, with $D_1$ a countably additive
probability measure on $(X_1,\cB_1)$;
\item[{\bf (b) }]
$g_2$ is $D_1$-continuous.
\end{itemize}
Then the composition of the maps $g_2 \circ g_1: \mZ^n \to X_2$ is
$D_2$-distributed, with the probability measure $D_2$ on $(X_2, \cB_2)$
given by
    \begin{equation}\label{D1invg2B}
        D_2(B) =
        D_1
        (
            g_2^{-1}(B)
        ),
        \qquad
        B \in \cB_2,
    \end{equation}
and the generated algebras satisfy the inclusion
    \begin{equation}\label{calSg2g1}
        \cS_{g_2 \circ g_1}
        \subset
        \cS_{g_1}.
    \end{equation}
\end{lemma}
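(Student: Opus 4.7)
The plan is to reduce everything to the functional characterisation of distributedness in Lemma~\ref{distrcriteria}, with the preliminary observation that $D_2$ in~(\ref{D1invg2B}) is well defined as a countably additive probability measure on $(X_2,\cB_2)$: the $D_1$-continuity of $g_2$ means its discontinuity set $N\subset X_1$ is Borel with $D_1(N)=0$, and $g_2$ restricted to the Borel complement $X_1\setminus N$ is continuous, so $g_2^{-1}(B)$ belongs to the $D_1$-completion of $\cB_1$ for every $B\in\cB_2$; the pushforward then yields $D_2$ with the normalisation and countable additivity inherited from $D_1$.

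To prove that $g_2\circ g_1$ is $D_2$-distributed, I would verify condition~(a) of Lemma~\ref{distrcriteria} for the composition. Fix any bounded continuous $f:X_2\to\mR$ and set $h:=f\circ g_2:X_1\to\mR$. Then $h$ is bounded, and its discontinuity set is contained in $N$, so $h$ is bounded and $D_1$-continuous. Applying condition~(b) of Lemma~\ref{distrcriteria} to the $D_1$-distributed map $g_1$ and the function $h$ yields the averagability of $h\circ g_1=f\circ(g_2\circ g_1)$ together with
$$
\bA\bigl(f\circ(g_2\circ g_1)\bigr)
=\int_{X_1}f(g_2(x))\,D_1(\rd x)
=\int_{X_2}f(y)\,D_2(\rd y),
$$
where the last equality is the standard change of variables under the pushforward~(\ref{D1invg2B}). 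Since $f$ was an arbitrary bounded continuous function on $X_2$, this is precisely condition~(a) of Lemma~\ref{distrcriteria} for $g_2\circ g_1$ with respect to $D_2$, establishing the required distributedness.

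For the inclusion~(\ref{calSg2g1}), take $A\in\cS_{g_2\circ g_1}$, so that $A=g_1^{-1}(C)$ with $C:=g_2^{-1}(B)$ for some $B\in\cB_2$ satisfying $D_2(\partial B)=0$. The key geometric step is the inclusion $\partial C\subset N\cup g_2^{-1}(\partial B)$: if $x\notin N$ and $g_2(x)\notin\partial B$, then $g_2(x)$ lies in the interior of $B$ or of its complement, and continuity of $g_2$ at $x$ transports a neighbourhood of $x$ entirely into $C$ or into $X_1\setminus C$, ruling out $x\in\partial C$. Since $D_1(N)=0$ and $D_1(g_2^{-1}(\partial B))=D_2(\partial B)=0$, we obtain $D_1(\partial C)=0$, so $C$ qualifies as a $D_1$-continuous set in $\cB_1$ and $A=g_1^{-1}(C)\in\cS_{g_1}$. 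The main obstacle is the mild measurability technicality that $g_2^{-1}(B)$ need not be strictly Borel when $g_2$ is only $D_1$-continuous; I would handle it by working throughout in the $D_1$-completion of $\cB_1$, or equivalently by replacing $C$ with a Borel representative that differs from it by a $D_1$-null subset of $N$, noting that, by Lemma~\ref{aversetprop}(d), such a replacement does not affect frequency measurability or the value of the frequency of $g_1^{-1}(C)$, and hence does not affect membership in $\cS_{g_1}$.
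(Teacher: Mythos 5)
Your proof follows the same route as the paper's: verify condition (a) of Lemma~\ref{distrcriteria} for $g_2\circ g_1$ by composing an arbitrary bounded continuous $f$ on $X_2$ with $g_2$ and invoking condition (b) of the same lemma for $g_1$, then handle the algebra inclusion by controlling $\partial g_2^{-1}(B)$. Two places where you are in fact more careful than the printed proof deserve comment. First, the paper's proof of (\ref{calSg2g1}) asserts the equality $D_2(\partial B) = D_1(\partial g_2^{-1}(B))$, but this equality can fail (one only has $\partial g_2^{-1}(B) \subset N \cup g_2^{-1}(\partial B)$, where $N$ is the discontinuity set of $g_2$; the reverse inclusion need not hold). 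What you prove — the inclusion, and the resulting one-sided bound $D_1(\partial g_2^{-1}(B)) \leqslant D_1(N) + D_1(g_2^{-1}(\partial B)) = D_2(\partial B)$ — is exactly what is needed and is the correct statement. Second, you flag the technicality that $g_2^{-1}(B)$ is a priori only $D_1$-complete-measurable rather than Borel; the paper silently treats it as Borel. Your remedy (pass to a Borel representative modulo a $D_1$-null subset of $N$, and use Lemma~\ref{aversetprop}(d) to show this does not affect membership of $g_1^{-1}(\cdot)$ in $\cS_{g_1}$) is a legitimate way to close that gap, though one should additionally note that this relies on $N$ being $D_1$-continuous (which holds since $D_1(N)=0$ and $N$ is closed, being a countable union of closed oscillation sets, so $\partial N\subset N$). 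Overall the argument is sound and strictly tighter than the paper's.
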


\begin{proof}
Let $\phi: X_2 \to \mR$ be a bounded continuous function. Then, by the assumption (b) of the lemma, the function
    \begin{equation}\label{phicircg2}
        f
        :=
        \phi \circ g_2 : X_1 \to \mR
    \end{equation}
is bounded and $D_1$-continuous. Hence, by the assumption (a) of the lemma and by the criterion (b) of Lemma~\ref{distrcriteria}, the function $f \circ g_1:  \mZ^n \to \mR$ is averageable and
\begin{equation}\label{Afcircg1}
    \bA
    (
        f \circ g_1
    ) =
    \int_{X_1}
    f(x)
    D_1(\rd x) =
    \int_{X_2}
    \phi(y)
    D_2(\rd y),
\end{equation}
where $D_2$ is the probability measure given by  (\ref{D1invg2B}). Since the function $\phi$ is otherwise arbitrary,  then, in view of the criterion (a) of Lemma~\ref{distrcriteria},  it follows from (\ref{phicircg2}) and (\ref{Afcircg1}) that the map $g_2 \circ g_1$ is $D_2$-distributed. It now remains to prove the inclusion (\ref{calSg2g1}). From the assumption (b) of the lemma and from (\ref{D1invg2B}), it follows that $ D_2 ( \partial B ) = D_1(\partial g_2^{-1}(B)) $ for any $B \in \cB_2$, and hence, $g_2^{-1}(B)$ is a $D_1$-continuous Borel subset of $X_1$ for any $D_2$-continuous set $B \in \cB_2$. Therefore,
    $$
         \{
            g_1^{-1}(g_2^{-1}(B)):\
            B \in \cB_2,\
            D_2(\partial B) = 0
         \}
         \subset
         \{
            g_1^{-1}(B):\
            B \in \cB_1,\
            D_1(\partial B) = 0
         \}.
    $$
Since the left-hand side of this inclusion is the algebra $\cS _{g_2 \circ g_1}$, whilst
the right-hand side is $\cS _{g_1}$, the inclusion (\ref{calSg2g1}) follows. \end{proof}

\section{Quasiperiodic objects on the integer lattice
\label{QOIL}}
\setcounter{equation}{0}

\subsection
{Cells \label{Cells}}

In what follows, we will use an extension of the notion of a space-filling polytope \cite{Conway} given below.

\begin{definition}
\label{cell} A set $V \subset \mR^m$ is called a cell if its translations $V+z$, considered for all $z \in \mZ^m$, form a partition of $\mR^m$.
\end{definition}
An example of a cell in $\mR^m$ is provided by the cube $[0,1)^m$. Recall that any unimodular matrix (that is, a square integer matrix with determinant $\pm 1$)  describes a linear bijection of the integer lattice.

\begin{lemma}\label{cellprop1}
\begin{itemize}
\item[{}]

\item[{\bf (a)}]
For any  unimodular matrix  $F \in \mZ^{m\x m}$, any $u \in
\mR^m$ and any cell  $V \subset \mR^m$, the set $F V + u$ is a cell
in $\mR^m$;

\item[{\bf (b)}]
a set $V \subset \mR^m$ is a cell if and only if there exists a
partition
    \begin{equation}\label{partOmega}
         \{
            \Omega_z:\
            z \in \mZ^m
        \}
    \end{equation}
    of $[0,1)^m$ satisfying
    \begin{equation}\label{cellV}
        V =
        \bigcup_{z \in \mZ^m}
        (
            \Omega_z +z
        ).
    \end{equation}
Moreover, such a partition is unique;

\item[{\bf (c)}]
any Lebesgue measurable cell $V \subset \mR^m$ has a unit measure, $ \mes_m V =
1$;

\item[{\bf (d)}]
for any cells $V_1 \subset \mR^{m_1}$ and $V_2 \subset \mR^{m_2}$,
the set $V_1 \x V_2$ is a cell in $\mR^{m_1 + m_2}$.
\end{itemize}
\end{lemma}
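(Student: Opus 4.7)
The four assertions are essentially disjoint; the core is (b), and (a), (c), (d) either reduce to it or follow directly from the definition of a cell. I would proceed in the order (a), (d), (b), (c).

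For (a), the plan is to observe that a unimodular matrix $F$ acts as a bijection of $\mZ^m$ onto itself, so the collection $\{FV+u+z : z \in \mZ^m\} = \{F(V+F^{-1}z)+u : z \in \mZ^m\}$ is a reindexing of $\{F(V+z')+u : z' \in \mZ^m\}$, the image of the partition $\{V+z'\}$ of $\mR^m$ under the affine bijection $x \mapsto Fx+u$. This image is a partition of $\mR^m$. Assertion (d) is even simpler: translates of $V_1 \times V_2$ by vectors $(z_1,z_2) \in \mZ^{m_1} \x \mZ^{m_2}$ equal $(V_1+z_1)\x(V_2+z_2)$, and these rectangles partition $\mR^{m_1+m_2}$ because the factors partition the respective spaces.

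The heart of the argument is (b), and it is the step I expect to require most care, because one must simultaneously show that the partition exists, is of the required form, and is unique. My plan is to set
\[
    \Omega_z := (V-z)\cap [0,1)^m, \qquad z \in \mZ^m,
\]
and verify three things. First, $\{\Omega_z\}$ is a partition of $[0,1)^m$: given $w \in [0,1)^m \subset \mR^m$, since $V$ is a cell, there is a unique $z \in \mZ^m$ with $w \in V+z$, which is equivalent to $w-z \in V$, i.e.\ to $w \in \Omega_{-z}$; this gives disjointness and exhaustion. Second, (\ref{cellV}) holds: since $\{[0,1)^m+z\}_{z \in \mZ^m}$ partitions $\mR^m$, we can write
\[
    V = \bigcup_{z \in \mZ^m}
        \bigl(V \cap ([0,1)^m+z)\bigr)
      = \bigcup_{z \in \mZ^m}
        \bigl(\Omega_z + z\bigr),
\]
and the union is disjoint. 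Third, for the converse, assume $V$ is given by (\ref{cellV}) for some partition $\{\Omega_z\}$ of $[0,1)^m$; then the translates $V+z'$ cover $\mR^m$ (reindex the double union via $w=z+z'$ and use that $\bigcup_z \Omega_z = [0,1)^m$) and are pairwise disjoint: any overlap would produce $\omega_1,\omega_2 \in [0,1)^m$ with $\omega_1-\omega_2 \in \mZ^m \cap (-1,1)^m = \{0\}$, forcing the translation vectors to coincide. For uniqueness, any such $\{\Omega_z\}$ must equal $(V-z) \cap [0,1)^m$ because $\Omega_z + z = V \cap ([0,1)^m + z)$.

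Finally, (c) falls out of (b) by Lebesgue $\sigma$-additivity: the sets $\Omega_z = (V-z)\cap [0,1)^m$ inherit measurability from $V$ via translation invariance, the union $\bigcup_z \Omega_z = [0,1)^m$ is disjoint, and (\ref{cellV}) expresses $V$ as a disjoint countable union of translates of the $\Omega_z$, so
\[
    \mes_m V
    = \sum_{z \in \mZ^m} \mes_m(\Omega_z+z)
    = \sum_{z \in \mZ^m} \mes_m \Omega_z
    = \mes_m [0,1)^m
    = 1.
\]
The only subtle point is ensuring throughout (b) that the half-open cube $[0,1)^m$ is used consistently so that the lattice translates of $[0,1)^m$ form a genuine partition of $\mR^m$; this is what makes the integer representative $z$ of a point $w \in \mR^m$ unique and underlies both existence and uniqueness of $\{\Omega_z\}$.
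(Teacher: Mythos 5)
Your proposal is correct and follows essentially the same route as the paper: the same choice $\Omega_z = (V-z)\cap[0,1)^m$ for part (b), the same translation-invariance argument for (c), the same product decomposition for (d), and for (a) the same exploitation of $F$ being a bijection of $\mZ^m$ (you phrase it as a reindexing, the paper as $G+\mZ^m = u+F(V+F^{-1}\mZ^m)$ and $G\cap(G+z)=\emptyset$; these are the same observation).
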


\begin{proof}
To establish the assertion (a), note that any unimodular
matrix $F \in \mZ^{m\x m}$ determines a linear bijection of
$\mZ^m$. Hence, for any cell $V \subset \mR^m$, any $u \in \mR^m$
and any $z \in \mZ^m \setminus \{0\}$, the set $G:= FV+u$ satisfies
\begin{align*}
    G + \mZ^m
    & =
    u + F (V + F^{-1} \mZ^m)
    =
    \mR^m,\\
    G
    \bigcap
    (G +z )
    & =
    u +
    F
    \left(V \bigcap (V +
    F^{-1} z)\right) =
\emptyset
\end{align*}
 which means that $G$ is  also a cell. In order to prove the assertion (b), we associate with a given but otherwise arbitrary set $V \subset \mR^m$
the sets
\begin{equation}
    \label{concreteOmegaz}
        \Omega_z^0
        :=
        [0,1)^m \bigcap (V - z),
        \qquad
        z \in \mZ^m,
\end{equation}
in terms of which the set $V$ is representable by (\ref{cellV}), that is, $V = \bigcup_{z\in \mZ^m} (\Omega_z^0 + z)$.  Indeed, for any $z \in \mZ^m$,
\begin{equation}
\label{OmegazV}
    \Omega_z^0 + z =
        (
            [0,1)^m + z
        )
        \bigcap
        V,
\end{equation}
and hence,
    $$
        \bigcup_{z \in \mZ^m}
        (\Omega_z^0 + z) =
        \left(
            \bigcup_{z \in \mZ^n}
            (
                [0,1)^m + z
            )
        \right)
        \bigcap
        V = V.
    $$
On the other hand, the sets $\Omega_z^0$ in (\ref{concreteOmegaz}) partition
the cube $[0,1)^m$ if and only if $V$ is a cell in $\mR^m$. This proves the first part of the
assertion (b). We will now prove that the sets $\Omega_z^0$ in (\ref{concreteOmegaz}) provide a
unique partition of $[0,1)^m$ such that a given cell $V \subset \mR^m$
is represented by (\ref{cellV}). To this end, let (\ref{partOmega}) describe  an arbitrary  partition of the cube $[0,1)^m$
satisfying (\ref{cellV}). Then for any $z \in \mZ^m$,
    $$
        \Omega_z + z =
        (
            [0,1)^m + z
        )
        \bigcap V,
    $$
which, in view of (\ref{concreteOmegaz}), implies  that $\Omega_z = \Omega_z^0$, thus proving the uniqueness of the partition.   The assertion (c) follows from the relations
    $$
        \mes_m V =
        \sum_{z \in \mZ^m}
        \mes_m (\Omega_z^0 + z) =
        \sum_{z \in \mZ^m}
        \mes_m \Omega_z^0 =
        \mes_m [0,1)^m = 1
    $$
which hold for any Lebesgue measurable cell $V \subset \mR^m$ and are based on (\ref{concreteOmegaz}), (\ref{OmegazV}) and the translation invariance of the  Lebesgue measure. The assertion (d) of the lemma follows from the identities $\mZ^{m_1+m_2} = \mZ^{m_1} \x \mZ^{m_2}$ and $\mR^{m_1+m_2} = \mR^{m_1} \x \mR^{m_2}$. \end{proof}

Lemma~\ref{cellprop1} shows that there exist more complicated cells in $\mR^m$  than the cube $[0,1)^m$. This is illustrated by Fig.~\ref{fig1} which provides an example of such a cell in $\mR^2$.
 \begin{figure}[h]
\begin{center}
\unitlength=0.5mm
\begin{picture}(120.00,130.00)
\put(36,66){\rule{18mm}{6mm}}
\put(48,54){\rule{6mm}{18mm}}
\multiput(24,46)(48,0){2}{\rule{6mm}{2mm}}%
\multiput(24,94)(48,0){2}{\rule{6mm}{2mm}}%
\multiput(28,42)(48,0){2}{\rule{2mm}{6mm}}%
\multiput(28,90)(48,0){2}{\rule{2mm}{6mm}}%
\multiput(0,18)(8,0){2}{\rule{2mm}{2mm}}%
\multiput(0,26)(8,0){2}{\rule{2mm}{2mm}}%
\multiput(0,114)(8,0){2}{\rule{2mm}{2mm}}%
\multiput(0,122)(8,0){2}{\rule{2mm}{2mm}}%
\multiput(96,114)(8,0){2}{\rule{2mm}{2mm}}%
\multiput(96,122)(8,0){2}{\rule{2mm}{2mm}}%
\multiput(96,18)(8,0){2}{\rule{2mm}{2mm}}%
\multiput(96,26)(8,0){2}{\rule{2mm}{2mm}}%

\multiput(0,18)(36,0){4}{\line(0,1){108.00}}%
\multiput(0,18)(0,36){4}{\line(1,0){108.00}}%
\put(-4,54){\vector(1,0){120.00}}
\put(36,14.5){\vector(0,1){120.00}}

\put(0,12){\makebox(0,0)[cc]{$-1$}}
\put(36,12){\makebox(0,0)[cc]{$0$}}
\put(72,12){\makebox(0,0)[cc]{$1$}}
\put(108,12){\makebox(0,0)[cc]{$2$}}
\put(-6,18){\makebox(0,0)[cc]{$-1$}}
\put(-6,54){\makebox(0,0)[cc]{$0$}}
\put(-6,90){\makebox(0,0)[cc]{$1$}}
\put(-6,126){\makebox(0,0)[cc]{$2$}}
\end{picture}
\end{center}\vskip-8mm
\caption{An example of a cell in $\mR^2$ which is obtained by
cutting the square $[0,1)^2$ into pieces and translating them by two-dimensional
integer vectors.} \label{fig1}
\end{figure}
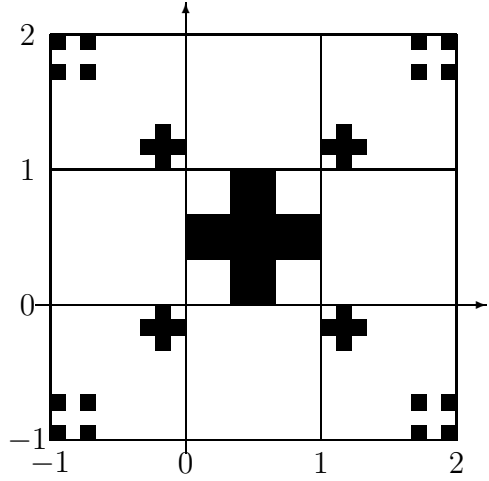

\begin{lemma}\label{cellprop2}
\begin{itemize}
\item[{}]
\item[{\bf (a)}]
Let $f: \mR^m \to \mR$ be a locally integrable function, unit
periodic with respect to its $m$ variables:
    \begin{equation}\label{fuz}
        f(u+z) = f(u)
        \quad
        {\rm for\ all}\
        u \in \mR^m,\ z \in \mZ^m.
    \end{equation}
Then $\int_{V} f(u) \rd u = \int_{[0,1]^m} f(u) \rd u$ for any Lebesgue
measurable cell $V \subset \mR^m$;

\item[{\bf (b)}]
any translation invariant set $G \subset \mR^m$ (under the group of translations of $\mZ^m$ in the sense that $G
+ \mZ^m = G$) is representable as $G = (V \bigcap G) + \mZ^m$ where
$V$ is an arbitrary cell in $\mR^m$;

\item[{\bf (c)}]
Let $G \subset \mR^m$ be a translation invariant and Lebesgue locally
measurable set. Then, for any Lebesgue measurable cell $V \subset \mR^m$,
$$
    \mes_m
    \left(V \bigcap G\right)
    =
    \mes_m
    \left(
        [0,1]^m \bigcap G
    \right).
$$
\end{itemize}
\end{lemma}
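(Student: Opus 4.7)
The plan is to reduce everything to the canonical partition provided by Lemma~\ref{cellprop1}(b): for any cell $V\subset \mR^m$ there is a unique partition $\{\Omega_z : z\in \mZ^m\}$ of $[0,1)^m$ with $V = \bigcup_{z\in\mZ^m}(\Omega_z + z)$, the union being disjoint. Measurability of $V$ transfers to each $\Omega_z = [0,1)^m \cap (V-z)$. This partition is the structural identity from which all three parts fall out.

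For part (a), I would split $\int_V f(u)\rd u$ over the disjoint translates $\Omega_z + z$, change variables $u = v+z$ on each piece (Lebesgue measure is translation invariant), and then invoke unit periodicity (\ref{fuz}) of $f$ to replace $f(v+z)$ by $f(v)$. The sum then reassembles the partition back into $[0,1)^m$, giving $\int_V f(u)\rd u = \sum_{z}\int_{\Omega_z} f(v)\rd v = \int_{[0,1)^m} f(v)\rd v$, and the latter equals $\int_{[0,1]^m} f(v)\rd v$ because the boundary of $[0,1]^m$ has zero $m$-dimensional Lebesgue measure. Absolute convergence of the sum (needed to justify the termwise manipulation) is secured by running the same argument with $|f|$ in place of $f$: the result is $\int_{[0,1]^m}|f|<+\infty$ since $f$ is locally integrable and periodic.

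For part (b), I would argue pointwise: since the translates $V+z$, $z\in\mZ^m$, partition $\mR^m$, every $x\in \mR^m$ admits a unique decomposition $x = v + z$ with $v\in V$ and $z\in\mZ^m$. If $x\in G$, then by translation invariance $G+\mZ^m = G$ we have $v = x - z \in G$, hence $v\in V\cap G$, so $x\in (V\cap G)+\mZ^m$. The reverse inclusion $(V\cap G)+\mZ^m \subset G$ is immediate from $V\cap G \subset G$ and $G+\mZ^m=G$.

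For part (c), I would apply part (a) to the indicator $f := \cI_G$. Translation invariance of $G$ gives the periodicity condition (\ref{fuz}), and Lebesgue local measurability of $G$ gives local integrability of $\cI_G$. Part (a) then yields $\mes_m(V\cap G) = \int_V \cI_G(u)\rd u = \int_{[0,1]^m}\cI_G(u)\rd u = \mes_m([0,1]^m\cap G)$, as required. The only mild subtlety is the absolute convergence mentioned above; otherwise the argument is purely a bookkeeping consequence of the partition identity in Lemma~\ref{cellprop1}(b).
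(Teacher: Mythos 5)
Your proof is correct and follows essentially the same route as the paper: part (a) decomposes $V$ via the canonical partition $\{\Omega_z\}$ of $[0,1)^m$ from Lemma~\ref{cellprop1}(b) and invokes translation invariance of $\mes_m$ together with the periodicity of $f$, part (b) uses the partition-of-$\mR^m$ property of cells (your pointwise phrasing and the paper's chain of set identities are the same computation), and part (c) applies (a) to the indicator $\cI_G$. Your explicit remark on absolute convergence of the series is a harmless extra piece of care that the paper leaves implicit.
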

\begin{proof}
The assertion (a) of the lemma is proved by the following equalities for any unit periodic and locally integrable function $f: \mR^m \to \mR$:
    $$
        \int_{V} f(u) \rd u =
        \sum_{z \in \mZ^m}
        \int_{\Omega_z+z}
        f(u) \rd u =
        \sum_{z \in \mZ^m}
        \int_{\Omega_z} f(u) \rd u =
        \int_{[0,1]^m}
        f(u) \rd u,
    $$
which employ a partition (\ref{partOmega}) from Lemma~\ref{cellprop1}(b) for a Lebesgue measurable cell $V \subset \mR^m$ and
the translation invariance of the Lebesgue measure. The assertion (b) of the lemma is established by the relations
    $$
        \left(V \bigcap G\right) + \mZ^m =
        \bigcup_{z \in \mZ^m}
        \left((V+z) \bigcap (G+z) \right) =
        (V+\mZ^m) \bigcap G = G
    $$
which hold for any cell $V \subset \mR^m$ and any translation invariant set $G \subset \mR^m$. The assertion (c) of the lemma follows from the assertion (a) and from the property that the indicator function $\cI_G$ of any translation invariant set $G \subset \mR^m$ is unit periodic with respect to   its $m$ variables. \end{proof}

\subsection
{Quasiperiodic  sets}
\label{QS}

With any positive integer $m \in \mN$,  any set $G \subset \mR^m$ and any matrix $\Lambda \in \mR^{m
\x n}$, we associate a set
\begin{equation}\label{QmGL}
    Q_m(G,\Lambda)
    :=
    \left\{
        x \in \mZ^n:\
        \Lambda x
        \in
        G + \mZ^m
    \right\}
\end{equation}
The following lemma provides a useful formalism for set theoretic operations with such subsets of the lattice $\mZ^n$ which will  play an important role in the subsequent sections.

\begin{lemma}\label{formalizm}
The subsets of $\mZ^n$, defined by (\ref{QmGL}), possess the following properties:
\begin{itemize}
\item[{\bf (a) }]
for any $m \in \mN$, any $\Lambda \in \mR^{m \x n}$, any $G
\subset \mR^m$ and any cell $V \subset \mR^m$,
$$
    Q_m(G,\Lambda) =
    Q_m
    \left(
        V
        \bigcap
        (
            G+\mZ^m
        ),
        \Lambda
    \right);
$$

\item[{\bf (b) }]
for any $m \in \mN$, any $\Lambda \in \mR^{m \x n}$, any cell $V
\subset \mR^m$ and any $G, H \subset V$,
\begin{align*}
    Q_m(V, \Lambda)    & =     \mZ^n,\\
    \mZ^n \setminus Q_m(G,\Lambda)
    & =
    Q_m(V\setminus G,\Lambda),\\
    Q_m(G,\Lambda) \bigcap Q_m(H,\Lambda)
    & = Q_m\left(G \bigcap H,\Lambda\right),\\
    Q_m(G,\Lambda) \bigcup Q_m(H,\Lambda)
    & =
    Q_m\left(G \bigcup H, \Lambda\right);
\end{align*}

\item[{\bf (c) }]
for any $m \in \mN$, $G \subset \mR^{n+m}$ and $\Lambda \in \mR^{m
\x n}$, \begin{equation}\label{QnmGIL}
    Q_{n+m}
    \left(
        G,
        \begin{bmatrix}
        I_n \\
        \Lambda
        \end{bmatrix}
    \right) =
    Q_m(H, \Lambda),
\end{equation}
where
\begin{equation}\label{QnmGILH}
    H :=
    \left\{
        v \in \mR^m:\
        {\rm there\ exists}\
        u \in \mZ^n\
        {\rm such\ that}\
        \begin{bmatrix}
        u \\
        v
        \end{bmatrix}\in
        G
    \right\};
\end{equation}
\item[{\bf (d) }]
for any $m \in \mN$, any $G \subset \mR^m$, any $\Lambda \in \mR^{m
\x n}$ and any unimodular $F \in \mZ^{m \x m}$,
$$
    Q_m(FG, F\Lambda) = Q_m(G,\Lambda);
$$
\item[{\bf (e) }]
for any $m_1, m_2 \in \mN$, $G_1 \subset \mR^{m_1}$,\, $G_2 \subset
\mR^{m_2}$, $\Lambda_1 \in \mR^{m_1 \x n}$,\, $\Lambda_2 \in
\mR^{m_2 \x n}$,
$$
    Q_{m_1+m_2}
    \left(
        G_1 \x G_2,
        \begin{bmatrix}
        \Lambda_1 \\
        \Lambda_2
        \end{bmatrix}
    \right) =
    Q_{m_1}
    (
        G_1,\Lambda_1
    )
    \bigcap
    Q_{m_2}
    (
        G_2,\Lambda_2
    );
$$
\item[{\bf (f) }]
for any $m \in \mN$, $\Lambda \in \mR^{m \x n}$, $G \subset
\mR^m$ and $z \in \mZ^n$,
$$
    Q_m(G,\Lambda) + z =
    Q_m
    (
        G+\Lambda z, \Lambda
    ).
$$
\end{itemize}
\end{lemma}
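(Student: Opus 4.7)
All six assertions follow by direct unpacking of the definition (\ref{QmGL}), combined with elementary properties of cells from Lemma~\ref{cellprop1} and the observation that $G+\mZ^m$ is, by construction, invariant under the additive group of translations by $\mZ^m$. My plan is to dispose of each clause separately, beginning with the two that underpin the rest.

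First, for (a), I would note that since $V \subset \mR^m$ is a cell, Lemma~\ref{cellprop2}(b) applied to the translation invariant set $G+\mZ^m$ yields $G+\mZ^m = (V \cap (G+\mZ^m)) + \mZ^m$, so the two sets defined by (\ref{QmGL}) coincide pointwise. Part (a) then lets me work inside a fixed cell $V$ throughout (b): the identity $Q_m(V,\Lambda)=\mZ^n$ follows from $V+\mZ^m=\mR^m$; the union and intersection formulas come from $(G\cup H)+\mZ^m = (G+\mZ^m)\cup(H+\mZ^m)$ and, for disjoint-in-$V$ sets, $(G\cap H)+\mZ^m = (G+\mZ^m)\cap(H+\mZ^m)$ (which requires the cell hypothesis to exclude ``spurious'' intersections coming from different $\mZ^m$-translates); the complement identity then follows since $V\setminus G$ and $G$ partition $V$ and $Q_m(V,\Lambda)=\mZ^n$.

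For (c), given $x\in\mZ^n$, I would write $\begin{bmatrix}I_n\\\Lambda\end{bmatrix}x = \begin{bmatrix}x\\ \Lambda x\end{bmatrix}$ and observe that this vector lies in $G+\mZ^{n+m}$ iff there exist $(u,v)\in G$ and $(p,q)\in\mZ^{n+m}$ with $u=x-p\in\mZ^n$ and $v=\Lambda x - q$; the first integrality condition is the defining property of $H$ in (\ref{QnmGILH}), and the second is equivalent to $\Lambda x \in v+\mZ^m\subset H+\mZ^m$. Running the argument in reverse shows the converse inclusion, yielding (\ref{QnmGIL}). Part (d) is immediate: unimodularity of $F$ gives $F\mZ^m=\mZ^m$, hence $FG+\mZ^m=F(G+\mZ^m)$ and the condition $F\Lambda x\in FG+\mZ^m$ is equivalent to $\Lambda x\in G+\mZ^m$. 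Part (e) uses $\mZ^{m_1+m_2}=\mZ^{m_1}\x\mZ^{m_2}$, so that $(G_1\x G_2)+\mZ^{m_1+m_2}=(G_1+\mZ^{m_1})\x(G_2+\mZ^{m_2})$, which splits the membership condition into the two separate ones defining $Q_{m_1}(G_1,\Lambda_1)$ and $Q_{m_2}(G_2,\Lambda_2)$. Part (f) follows by the change of variable $y=x+z$ and the identity $\Lambda(y-z)\in G+\mZ^m \iff \Lambda y \in G+\Lambda z + \mZ^m$.

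There is no substantial obstacle here; the entire lemma is a formal unpacking exercise. The only subtle point worth handling carefully is the intersection identity in (b), where one must use that $V$ is a fundamental domain for $\mZ^m$ to rule out contributions from distinct $\mZ^m$-translates of $G$ and $H$ meeting inside $V$; this is where Lemma~\ref{cellprop1}(b) (partition property) is genuinely invoked rather than just the covering property.
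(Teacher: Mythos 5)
Your proposal is correct and takes essentially the same route as the paper: (a) via Lemma~\ref{cellprop2}(b), (b) via the fact that $G\mapsto G+\mZ^m$ preserves set theoretic operations on subsets of a cell (the paper packages this as a map $K:2^V\to 2^{\mR^m}$), and (c)--(f) by direct unpacking of (\ref{QmGL}). One minor slip: in (b) the intersection identity $(G\cap H)+\mZ^m=(G+\mZ^m)\cap(H+\mZ^m)$ is needed for arbitrary $G,H\subset V$, not just ``disjoint-in-$V$'' ones, though your parenthetical correctly identifies the cell (partition) hypothesis as the reason cross-translate collisions cannot occur.
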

\begin{proof}  Since the set $G +\mZ^m$ is translation invariant, then, in view of
Lemma~\ref{cellprop2}(b),  it can be represented as
$$
    G + \mZ^m =
    \left(
        V
        \bigcap
        (
            G + \mZ^m
        )
    \right) + \mZ^m
$$
for any cell $V \subset \mR^m$. In view of (\ref{QmGL}), this implies the
assertion (a) of Lemma~\ref{formalizm}. The assertion (b) follows from the
property immediately below. For any given cell $V \subset \mR^m$,
the map $K:  2^V \to 2^{\mR^m}$, defined by $ K(G) = G + \mZ^m $,
satisfies $K(V) = \mR^m$ and preserves the set theoretic
operations in the sense that
\begin{align*}
    \mR^m \setminus K(G) & =
    K(V \setminus G),\\
    K(G) \bigcap K(H) & =
    K\left(G \bigcap H\right),\\
    K(G) \bigcup K(H) & =
    K\left(G \bigcup H\right)
\end{align*}
for any $G, H \subset V$.
In order to prove the assertion (c), we note that a point $x \in \mZ^n$ belongs
to the set on the left-hand side  of (\ref{QnmGIL}) if and only if there exist
$ u \in \mR^n$,\ $v \in \mR^m$,\ $y \in \mZ^n$,\ $z \in \mZ^m$
satisfying $x = u + y$,\ $ \Lambda x = v + z$ and
$\begin{bmatrix}u \\ v\end{bmatrix} \in G$. The
last three relations hold if and only if $x$ belongs to the set on the
right-hand side  of (\ref{QnmGIL}), with the set $H$ given by
(\ref{QnmGILH}). The assertion (d) follows from the property that\
$(F G) + \mZ^m = F (G + \mZ^m)$ for any unimodular matrix $F \in \mZ^{m
\x m}$ and any set $G \subset \mR^m$. The assertion (e) of the lemma
follows from the equality\ $ (G_1 \x G_2) + \mZ^{m_1 + m_2} =
(G_1 + \mZ^{m_1}) \x (G_2 + \mZ^{m_2}) $ for any sets $G_1
\subset \mR^{m_1}$ and $G_2 \subset \mR^{m_2}$. Finally, the
assertion (f) is proved by noting that for any $z \in \mZ^n$, a point $x \in
\mZ^n$ satisfies $\Lambda x \in G + \mZ^m$ if and only if  $\Lambda
(x + z) \in G + \Lambda z + \mZ^m$. \end{proof}

By Lemma~\ref{formalizm}~(a), we can  restrict ourselves, without
loss of generality, to considering the sets $Q_m(G,\Lambda)$ in (\ref{QmGL}) only for $G
\subset V$, where $V$ is a fixed but otherwise arbitrary cell in
$\mR^m$. For any matrix $\Lambda \in \mR^{m \x n}$, we define  the
class of sets
\begin{equation}\label{calQmL}
    \cQ _m(\Lambda)
    :=
    \left\{
        Q_m(G,\Lambda):\
        G \subset [0,1)^m\
        {\rm is\ Jordan\ measurable}
    \right\}.
\end{equation}
From Lemma~\ref{formalizm}(b) and the property that Jordan
measurable subsets of $[0,1)^m$ form an algebra, it follows that $\cQ _m(\Lambda)$  in (\ref{calQmL}) is
an algebra of subsets of $\mZ^n$.

\begin{definition}\label{Lquasiper}
The algebra $\cQ_m(\Lambda)$, defined by (\ref{calQmL}) for given $m \in \mN$ and $\Lambda \in \mR^{m \x n}$, is  called the algebra of $\Lambda$-quasiperiodic subsets of $\mZ^n$.
\end{definition}

 \begin{lemma}\label{Lquasiperprop}
 The algebra (\ref{calQmL}) possesses the following
properties:

\begin{itemize}
 \item[{\bf (a)}]
 for  any unimodular matrix $F \in \mZ^{m \x m}$,\ \, $
\cQ _{m}(F \Lambda) = \cQ _{m}(\Lambda)$;
 \item[{\bf (b)}]
if  $\Lambda_1 \in \mR^{m_1 \x n}$ is a submatrix of $\Lambda_2 \in \mR^{m_2 \x n}$, then $ \cQ _{m_1}(\Lambda_1) \subset \cQ _{m_2}(\Lambda_2)$;
 \item[{\bf (c)}]
 for any $m \in \mN$ and any $\Lambda \in \mR^{m \x n}$,
\begin{equation}\label{QnmIL}
    \cQ _{n+m}
    \left(
        \begin{bmatrix}
            I_n \\
            \Lambda
        \end{bmatrix}
    \right)
    =
    \cQ _m(\Lambda) ;
\end{equation}
\item[{\bf (d)}]
the algebra $\cQ _m(\Lambda)$ is translation invariant in the sense that  $A \in \cQ _m(\Lambda)$ implies  $A + z \in \cQ _m(\Lambda)$ for any $z \in \mZ^n$.
\end{itemize}
 \end{lemma}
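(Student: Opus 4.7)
I propose to establish the four parts in sequence using the set-theoretic formalism of Lemma~\ref{formalizm} together with the closure of the class of Jordan measurable subsets of $[0,1)^m$ under finite unions, intersections, complements and translations.

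For part~(a), Lemma~\ref{formalizm}(d) gives $Q_m(G,\Lambda)=Q_m(FG,F\Lambda)$ for any unimodular $F\in\mZ^{m\x m}$. The image $FG$ is a bounded Jordan measurable subset of the cell $F[0,1)^m$ by Lemma~\ref{cellprop1}(a), and applying Lemma~\ref{formalizm}(a) yields $Q_m(FG,F\Lambda)=Q_m(G',F\Lambda)$ with $G':=[0,1)^m\cap(FG+\mZ^m)$, a finite union of translates of Jordan measurable pieces of $FG$, and therefore itself Jordan measurable in $[0,1)^m$; the reverse inclusion follows by the same argument applied to $F^{-1}$. For part~(b), we first use~(a) with a unimodular row-permutation matrix to reduce to $\Lambda_2=\begin{bmatrix}\Lambda_1\\ \Lambda_3\end{bmatrix}$, where Lemma~\ref{formalizm}(b) supplies $Q_{m_2-m_1}([0,1)^{m_2-m_1},\Lambda_3)=\mZ^n$ and Lemma~\ref{formalizm}(e) then yields $A=Q_{m_2}(G_1\x[0,1)^{m_2-m_1},\Lambda_2)\in\cQ_{m_2}(\Lambda_2)$ for any $A=Q_{m_1}(G_1,\Lambda_1)\in\cQ_{m_1}(\Lambda_1)$. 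For part~(d), Lemma~\ref{formalizm}(f) gives $A+z=Q_m(G+\Lambda z,\Lambda)$, and Lemma~\ref{formalizm}(a) again reduces this to $Q_m([0,1)^m\cap((G+\Lambda z)+\mZ^m),\Lambda)$, whose argument set is Jordan measurable by the same finite-union reasoning as in~(a).

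Part~(c) then combines the previous items: the inclusion $\cQ_m(\Lambda)\subset\cQ_{n+m}(\begin{bmatrix}I_n\\ \Lambda\end{bmatrix})$ is immediate from~(b), since $\Lambda$ is a submatrix of $\begin{bmatrix}I_n\\ \Lambda\end{bmatrix}$. For the reverse inclusion, given $A=Q_{n+m}(G,\begin{bmatrix}I_n\\ \Lambda\end{bmatrix})$ with Jordan measurable $G\subset[0,1)^{n+m}$, Lemma~\ref{formalizm}(c) identifies $A=Q_m(H,\Lambda)$, where the set in~(\ref{QnmGILH}) reduces, because $\mZ^n\cap[0,1)^n=\{0\}$, to the slice $H=\{v\in[0,1)^m:(0,v)\in G\}$. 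The remaining task is to verify that $H$, or a $Q_m$-equivalent representative in $[0,1)^m$, can be taken to be Jordan measurable; I would handle this by approximating $G$ from inside and outside by finite unions of axis-aligned $(n+m)$-dimensional rectangles, whose slices at $u=0$ are finite unions of rectangles in $[0,1)^m$ (and hence Jordan measurable), and passing to the limit.

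\textbf{Main obstacle.} I expect the sole delicate step to be the Jordan measurability of the slice $H$ in part~(c), since slices of Jordan measurable sets are not in general Jordan measurable in lower dimension. The rectangle approximation sketched above, combined with the observation that $\partial H$ is contained in the slice at $u=0$ of the $(n+m)$-dimensional null set $\partial G$, should suffice; a cleaner alternative is to exploit the freedom to replace $G$ by a Jordan measurable cylinder $[0,1)^n\x H$ yielding the same value of $Q_{n+m}$, thereby translating the issue into the well-posed question of choosing a Jordan measurable $H\subset[0,1)^m$ whose associated $\Lambda$-quasiperiodic set coincides with $A$.
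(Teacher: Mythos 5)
Your treatment of parts (a), (b) and (d) reproduces the paper's argument: (a) via Lemma~\ref{formalizm}(a),(d) together with the observation that the normalization $G\mapsto[0,1)^m\bigcap(F^{-1}G+\mZ^m)$ preserves Jordan measurability (only finitely many integer translates of the bounded set $F^{-1}G$ meet the cube); (b) by reducing to the stacked case via (a) and then invoking Lemma~\ref{formalizm}(b),(e); (d) via Lemma~\ref{formalizm}(f),(a). These steps are correct and essentially identical to the paper's.

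You have also correctly located the delicate point in (c): the slice $H=\{v:(0,v)\in G\}$ produced by Lemma~\ref{formalizm}(c) need not be Jordan measurable. However, neither of your repairs closes the gap, and in fact it cannot be closed. The inclusion $\{0\}\x\partial H\subset\partial G$ is true but vacuous, since $\{0\}\x\partial H$ has $(n+m)$-dimensional measure zero regardless of $\partial H$; thus $\mes_{n+m}(\partial G)=0$ gives no control on $\mes_m(\partial H)$. Likewise, inner and outer rectangle approximations of $G$ do not control the $u=0$ slice: a rectangle inside $G$ that avoids the hyperplane $\{u=0\}$ contributes nothing to $H$. A concrete obstruction: take $n=m=1$, $\Lambda=\sqrt2$, let $S:=\{2\sqrt2\,k\bmod 1:\,k\in\mZ\}$ and $G:=\{0\}\x S\subset[0,1)^2$. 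Then $G$ is Jordan measurable of planar measure zero (its closure $\{0\}\x[0,1]$ is a segment), and $Q_2\big(G,\begin{bmatrix}1\\\sqrt2\end{bmatrix}\big)=\{x\in\mZ:\sqrt2\,x\bmod 1\in S\}=2\mZ$; but $2\mZ\notin\cQ_1(\sqrt2)$, because any Jordan measurable $B'\subset[0,1)$ with $Q_1(B',\sqrt2)=2\mZ$ would have to contain the dense set $\{2\sqrt2\,k\bmod 1\}$ and avoid the dense set $\{(2k+1)\sqrt2\bmod 1\}$, forcing $\partial B'=[0,1]$. So the reverse inclusion you are trying to prove in (c) fails, and your ``cleaner alternative'' --- producing a Jordan measurable $H\subset[0,1)^m$ with $Q_m(H,\Lambda)=A$ --- is precisely the object that does not exist here. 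The paper's own proof of (c) makes the same silent jump (it applies Lemma~\ref{formalizm}(c) without addressing Jordan measurability of $H$), so you have identified a genuine soft spot; what holds unconditionally is only the one-sided inclusion $\cQ_m(\Lambda)\subset\cQ_{n+m}\big(\begin{bmatrix}I_n\\\Lambda\end{bmatrix}\big)$, which already follows from part~(b).
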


\begin{proof}
In order to prove the assertion (a), we note that, in view of the assertions (a) and (d) of Lemma~\ref{formalizm},
$$
    Q_m
    (
        G, F\Lambda
    )
    =
    Q_m(K(G), \Lambda)
$$
holds for any unimodular matrix $F \in \mZ^{m \x m}$ and any set $G \subset [0,1)^m$. Here, the set $K(G)\subset [0,1)^m$ is given by
$$
    K(G)
    =
    [0,1)^m
    \bigcap
    (F^{-1} G + \mZ^m).
$$
The map $K$ is a bijection on the algebra of Jordan measurable subsets of $[0,1)^m$, whence the assertion (a) of the lemma follows.
In order to establish the assertion (b), it suffices to consider the case where $$
    \Lambda_2
    =
    \begin{bmatrix}
        \Lambda_1 \\
        \Lambda_3
    \end{bmatrix},
$$
with $\Lambda_3 \in \mR^{(m_2-m_1) \x n}$. Indeed, this structure can always be achieved by permuting the rows of the matrices $\Lambda_1$ and $\Lambda_2$, which, in view of the assertion (a) of the lemma, does not affect the corresponding algebras  of quasiperiodic sets $\cQ_{m_1}(\Lambda_1)$ and $\cQ_{m_2}(\Lambda_2)$. By using the assertions (b) and (e) of Lemma~\ref{formalizm}, it follows that any $\Lambda_1$-quasiperiodic set $Q_{m_1}(G,\Lambda_1)$ (with a Jordan measurable set $G \subset [0,1)^{m_1}$) is representable as
$$
    Q_{m_1}(G,\Lambda_1)
    =
    Q_{m_2}(G \x [0,1)^{m_2-m_1},\Lambda_2).
$$
The right-hand side of this  equality is a $\Lambda_2$-quasiperiodic subset of $\mZ^n$, thus establishing the assertion (b) of the lemma. In order to prove the assertion (c), we note that, since $\Lambda \in \mR^{m \x n}$ is  a submatrix of $\begin{bmatrix} I_n \\ \Lambda\end{bmatrix}$,  then, in view of the above assertion (b), the algebra $\cQ _m(\Lambda)$ is a subalgebra of the algebra on the left-hand side of (\ref{QnmIL}). On the other hand, by applying Lemma~\ref{formalizm}(c), it follows that  any $\begin{bmatrix} I_n \\ \Lambda\end{bmatrix}$-quasiperiodic set is representable as a $\Lambda$-quasiperiodic set. Hence, these two algebras also satisfy the opposite inclusion and the equality (\ref{QnmIL}). The assertion (d) of the lemma follows from Lemma~\ref{formalizm}(f). \end{proof}

For what follows,  we need an extension of the algebra of quasiperiodic sets (\ref{calQmL}) to infinite-dimensional matrices $\Lambda$. More precisely, for a given matrix
\begin{equation}\label{calL}
    \cL
    :=
    (
        \lambda_{jk}
    )_{j \in \mZ,\ 1 \< k \< n}
    \in
    \mR^{\infty \x n},
\end{equation}
we define
\begin{align}
    \nonumber
    \cQ _{\infty}(\cL)
    :=&
    \left\{
        Q_m(G,\Lambda):\
        G \in [0,1)^m\
        {\rm is\ Jordan\ measurable},\right.\\
        \nonumber
        &
        \left. \quad \Lambda \in \mR^{m\x n}\
        {\rm is\ a\ submatrix\ of}\ \cL,\ m \in \mN
    \right\} \\
    \label{calQcalL}
    =&
    \bigcup_{m \> 1}\ \
    \bigcup_{\Lambda\ {\rm  is\ an }\
    (m\x n)-{\rm submatrix\ of}\ \cL}\
    \cQ _m(\Lambda)
\end{align}
which is also an algebra of subsets of the lattice $\mZ^n$.

\begin{definition}
\label{calLquasiper}
The algebra $\cQ_{\infty}(\cL)$, defined by (\ref{calQcalL}) for a given matrix $\cL \in \mR^{\infty \x n}$, is called an algebra of $\cL$-quasiperiodic sets.
\end{definition}

\subsection
{Frequency measurability of quasiperiodic sets}
\label{FMQS}

With any $\omega \in \mR^n$, we associate an elementary trigonometric
polynomial $T_{\omega}: \mR^n \to \mC$ defined by
\begin{equation}\label{Tlambda}
    T_{\omega}(x) :=
    \re^{
        2\pi i \omega^{\rT} x
    },
\end{equation}
where $i:= \sqrt{-1}$ is the imaginary unit.

\begin{lemma}\label{tripol}
For any $\omega \in \mR^n$, the function $T_{\omega}$ in (\ref{Tlambda}) is averageable and its average value is  $ \bA ( T_{\omega} ) = \cI_{\mZ^n}(\omega) $.
\end{lemma}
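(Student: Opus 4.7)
The plan is to split according to whether $\omega \in \mZ^n$ or not, and in each case compute the average $W(T_{\omega}, P_{a,\ell})$ explicitly on a discrete parallelepiped $P_{a,\ell}$ of the form (\ref{Pal}), using the fact that $T_{\omega}$ factorizes as a product over coordinates.

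If $\omega \in \mZ^n$, then for every $x \in \mZ^n$ the inner product $\omega^{\rT} x$ is an integer, so $T_{\omega}(x) = 1$ identically. Hence $T_{\omega} \equiv \bone$ is trivially averageable with $\bA(T_{\omega}) = 1 = \cI_{\mZ^n}(\omega)$, matching the statement.

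Now suppose $\omega = (\omega_k)_{1 \< k \< n} \notin \mZ^n$, so there exists at least one index $k$ with $\omega_k \notin \mZ$. First I would exploit the product structure: since $T_{\omega}(x) = \prod_{k=1}^n \re^{2\pi i \omega_k x_k}$ and $P_{a,\ell}$ is a Cartesian product of one-dimensional arithmetic progressions, one has
\begin{equation*}
    W(T_{\omega}, P_{a,\ell})
    =
    \prod_{k=1}^n
    \frac{1}{\ell_k}
    \sum_{j=0}^{\ell_k - 1}
    \re^{2\pi i \omega_k (a_k + j)}
    =
    \prod_{k=1}^n
    \re^{2\pi i \omega_k a_k}
    \frac{1}{\ell_k}
    \sum_{j=0}^{\ell_k - 1}
    \re^{2\pi i \omega_k j}.
\end{equation*}
For an index $k$ with $\omega_k \in \mZ$, the corresponding factor has modulus one, while for an index $k$ with $\omega_k \notin \mZ$, summing the geometric progression gives
\begin{equation*}
    \left|\frac{1}{\ell_k}
    \sum_{j=0}^{\ell_k-1}\re^{2\pi i \omega_k j}\right|
    =
    \frac{1}{\ell_k}\,
    \frac{|\re^{2\pi i \omega_k \ell_k} - 1|}{|\re^{2\pi i \omega_k} - 1|}
    \<
    \frac{2}{\ell_k\, |\re^{2\pi i \omega_k} - 1|}.
\end{equation*}

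Combining the two types of factors, and fixing some index $k_0$ with $\omega_{k_0}\notin \mZ$, we get a bound $|W(T_{\omega}, P_{a,\ell})| \< 2\, |\re^{2\pi i \omega_{k_0}} - 1|^{-1}/\ell_{k_0}$. Since $\ell_{k_0} \> N$ whenever $P_{a,\ell} \in \cP_N$ (see (\ref{calPN})), taking the supremum over $P_{a,\ell} \in \cP_N$ and letting $N \to \infty$ along the filter base $\bP$ in (\ref{Pfilter}) shows that both $\bA_*(T_{\omega})$ and $\bA^*(T_{\omega})$ vanish (after splitting $T_\omega$ into real and imaginary parts, or applying the complex extension of $\bA$ mentioned after Lemma~\ref{averfunprop}). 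Thus $T_{\omega}$ is averageable with $\bA(T_{\omega}) = 0 = \cI_{\mZ^n}(\omega)$, completing the case analysis.

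No step looks like a serious obstacle: the entire argument rests on the geometric-sum identity, and the only mild care needed is to extend the functional $\bA$ to complex-valued functions coordinatewise and to note that the decay of the modulus bound is uniform in the offset $a$, so the limit along the filter base $\bP$ is legitimate rather than merely an average along a single subsequence.
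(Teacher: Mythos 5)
Your proof is correct and follows essentially the same route as the paper: split on $\omega\in\mZ^n$ versus $\omega\notin\mZ^n$, exploit the factorization of $W(T_\omega,P_{a,\ell})$ over coordinates, and bound the geometric sums to force the upper and lower averages to zero. The only (harmless) difference is that you discard all but one noninteger coordinate $k_0$, getting a decay of order $N^{-1}$, whereas the paper retains every index in $\cK$ and obtains $N^{-\#\cK}$; both suffice for the limit along the filter base $\bP$.
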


\begin{proof}  If $\omega \in \mZ^n$, then $T_{\omega}(x) = 1$ for all $x
\in \mZ^n$, and hence, $\bA(T_{\omega}) = 1= \cI_{\mZ^n}(\omega)$ holds for such vectors $\omega$. Now, suppose $\omega :=
(\omega_k)_{1\< k \< n} \not\in \mZ^n$, that is, the indices of noninteger entries of $\omega$ form a nonempty set
\begin{equation}
\label{cK}
    \cK
    :=
    \left\{
        1 \< k \< n:\ \omega_k \not\in \mZ
    \right\}.
\end{equation}
The function $T_{\omega}$ in (\ref{Tlambda}) has the following average value (\ref{WfP}) over the discrete parallelepiped $P_{a,\ell}$ in (\ref{Pal}):
\begin{equation}\label{WTPal}
    W
    (
        T_{\omega}, P_{a,l}
    )
    =
    \exp
    \left(
        2 \pi i
        \sum_{k=1}^{n} \omega_k
        \left(
            a_k + \frac{\ell_k-1}{2}
        \right)
    \right)
    \prod_{k=1}^{n}
    \psi(\ell_k, \omega_k),
\end{equation}
where the function $\psi:  \mN \x \mR \to [-1,1]$ is defined by
\begin{equation}\label{psiuv}
    \psi(u,v) =
    \left\{
        \begin{matrix}
        (-1)^{(u-1)v} & {\rm for} & v \in \mZ \\ \\
        \frac{\sin(\pi u v)}{u \sin(\pi v)} & {\rm for} & v
        \not\in \mZ
        \end{matrix}
    \right. .
\end{equation}
From (\ref{WTPal}) and (\ref{psiuv}), it follows that for any $N \in \mN$,
\begin{equation}\label{supPN}
    \sup_{P \in \cP_N}
    |
        W
        (
            T_{\omega}, P
        )
    |
    \<
    \frac{1}{N^{\# \cK }}
    \prod_{k \in \cK }
    \frac{1}{
    |
        \sin(\pi \omega_k)
    |},
\end{equation}
where $\cP_N$ is the class of sufficiently large parallelepipeds given by (\ref{calPN}).  Since the set $\cK$ in (\ref{cK}) is not empty (and hence, $\# \cK >0$), then the right-hand side of (\ref{supPN}) converges to zero as $N \to +\infty$. This convergence implies that, for any vector $\omega \in \mR^n\setminus \mZ^n$, the function $T_{\omega}$ in (\ref{Tlambda}) is averageable, with $\bA(T_{\omega}) = 0 = \cI_{\mZ^n}(\omega)$, thus  completing the proof of the lemma. \end{proof}

%
%

\begin{definition}
\label{nonres}
A matrix $\Lambda \in \mR^{m \x n}$ is said to be nonresonant if the
rows of the matrix $\begin{bmatrix} I_n \\ \Lambda
\end{bmatrix}\in \mR^{(n+m) \x n}$ are linearly
independent over the field of rationals.
\end{definition}
Note that nonresonant matrices  do exist. Moreover, \textit{resonant} matrices $\Lambda \in \mR^{m\x n}$ (which are not nonresonant) form a set of $mn$-dimensional Lebesgue measure zero. Indeed, since the nonresonance property is equivalent to
 \begin{equation}
 \label{LTu}
    \Lambda^{\rT} u \not\in \mZ^n
    \quad
    {\rm for\ all}\
    u \in
    \mZ^m \setminus \{0\},
 \end{equation}
then the set of all resonant matrices $\Lambda \in \mR^{m \x n}$ can be represented as a countable union
$$
    \bigcup_{u \in \mZ^m \setminus \{0\},\ v \in \mZ^n}
    \Gamma_{u,v}
$$
of $(m-1)n$-dimensional affine subspaces $ \Gamma_{u,v} := \{ \Lambda \in \mR^{m \x n}:\ \Lambda^{\rT} u = v \} $, each of which has zero $mn$-dimensional Lebesgue measure.

\begin{theorem}
\label{periodicfun}
Suppose $\Lambda \in \mR^{m \x n}$ is  a nonresonant matrix, and  a function $f: \mR^m \to \mR$ satisfies the conditions
\begin{itemize}
\item[{\bf (a)}]
$f$ is unit periodic with respect to its $m$ variables;
\item[{\bf (b)}]
$f$ is $\mes_m$-continuous;
\item[{\bf (c)}]
$f$ is bounded.
\end{itemize}
Then the function $f \circ \Lambda: \mZ^n \to \mR$ is averageable, and its average value is computed as
\begin{equation}\label{AfL}
    \bA(f \circ \Lambda) =
    \int_{[0,1]^m}
    f(u)
    \rd u.
\end{equation}
\end{theorem}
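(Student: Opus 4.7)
My plan is to prove averageability of $f \circ \Lambda$ via a three-stage Weyl-style argument, extending Lemma~\ref{tripol} outward from trigonometric monomials. First I would evaluate $\bA$ on compositions of monomials with $\Lambda$: for any $\omega \in \mZ^m$, the definition (\ref{Tlambda}) gives $T_\omega \circ \Lambda = T_{\Lambda^\rT \omega}$ pointwise on $\mZ^n$, and the nonresonance characterisation (\ref{LTu}) says $\Lambda^\rT \omega \in \mZ^n$ precisely when $\omega = 0$. Lemma~\ref{tripol} therefore yields $\bA(T_\omega \circ \Lambda) = \cI_{\{0\}}(\omega) = \int_{[0,1]^m} T_\omega(u)\,\rd u$. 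By linearity of $\bA$ on its complex extension (Lemma~\ref{averfunprop}(c)), the identity (\ref{AfL}) then holds for every trigonometric polynomial $p(u) = \sum_{\omega \in F} c_\omega T_\omega(u)$ with finite $F \subset \mZ^m$, and in particular for every real trigonometric polynomial.

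Second, I would extend (\ref{AfL}) to continuous unit-periodic $f$ by uniform approximation. Identifying $f$ with a continuous function on the torus $\T^m$, Stone--Weierstrass (or a multivariate Fej\'er sum) yields, for each $\varepsilon > 0$, a real trigonometric polynomial $p$ with $\sup_{u\in[0,1]^m}|f(u)-p(u)|<\varepsilon$. Since $\|f\circ\Lambda - p\circ\Lambda\| \< \sup_u|f(u)-p(u)|$ in the norm of Section~\ref{AF}, the Lipschitz inequality (\ref{Lipschitz}) gives
\begin{equation*}
|\bA_*(f\circ\Lambda) - \bA(p\circ\Lambda)|,\ \ |\bA^*(f\circ\Lambda) - \bA(p\circ\Lambda)| \< \varepsilon,
\end{equation*}
while $|\bA(p\circ\Lambda) - \int_{[0,1]^m}f\,\rd u| \< \varepsilon$ follows from step one and the trivial $\sup$-to-integral bound. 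Letting $\varepsilon \to 0$ yields $\bA_*(f\circ\Lambda) = \bA^*(f\circ\Lambda) = \int_{[0,1]^m}f\,\rd u$, so (\ref{AfL}) holds for continuous unit-periodic $f$.

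Finally, for the general $\mes_m$-continuous bounded case I would sandwich $f$ by continuous periodic functions. Boundedness and $\mes_m$-a.e.\ continuity of $f$ imply Riemann integrability on the torus, so for each $\varepsilon>0$ there exist continuous unit-periodic $g_-, g_+: \mR^m \to \mR$ with $g_- \< f \< g_+$ everywhere and $\int_{[0,1]^m}(g_+-g_-)\,\rd u<\varepsilon$; concretely, one can take lower and upper Darboux step functions on a sufficiently fine dyadic partition of $[0,1)^m$ (whose total discrepancy is controlled by Riemann integrability) and then mollify them outward so as to preserve the one-sided inequalities. Monotonicity of $\bA_*, \bA^*$ (Lemma~\ref{averfunprop}(a)) together with step two applied to $g_\pm$ yields
\begin{equation*}
\int_{[0,1]^m}g_-\,\rd u = \bA(g_-\circ\Lambda) \< \bA_*(f\circ\Lambda) \< \bA^*(f\circ\Lambda) \< \bA(g_+\circ\Lambda) = \int_{[0,1]^m}g_+\,\rd u,
\end{equation*}
and since $\int_{[0,1]^m} g_-\,\rd u \< \int_{[0,1]^m} f\,\rd u \< \int_{[0,1]^m} g_+\,\rd u$ with the outer quantities differing by less than $\varepsilon$, sending $\varepsilon\to 0$ proves (\ref{AfL}) and averageability of $f\circ\Lambda$. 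The conceptual crux is step one, where nonresonance is invoked exactly once to annihilate every nonzero Fourier mode; the technically delicate point is the continuous periodic sandwich in step three, which must respect $g_-\< f\< g_+$ globally on the torus while keeping $\int(g_+-g_-)\,\rd u$ arbitrarily small, but this is a standard consequence of Riemann integrability and the rest reduces to bookkeeping with Lemma~\ref{averfunprop}.
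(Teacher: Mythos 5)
Your proof is correct and follows the same two-stage extension strategy as the paper (trigonometric polynomials $\to$ continuous periodic functions $\to$ general bounded $\mes_m$-continuous functions), with Steps 1 and 2 essentially identical. Where you diverge is in the final passage from continuous to general $f$: the paper introduces the lower and upper semicontinuous envelopes $f^-(u) := \min(f(u), \liminf_{v\to u} f(v))$ and $f^+(u) := \max(f(u), \limsup_{v\to u} f(v))$, which agree with $f$ $\mes_m$-a.e., then takes a decreasing sequence of continuous unit-periodic functions $f_k^+ \downarrow f^+$ and invokes Lebesgue dominated convergence on $\int_{[0,1]^m} f_k^+ \to \int_{[0,1]^m} f$. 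You instead observe that boundedness plus $\mes_m$-a.e.\ continuity is exactly Lebesgue's criterion for Riemann integrability on the torus, produce a lower/upper Darboux step-function sandwich with integral gap $< \eps/2$, and mollify each step function outward into a continuous periodic function while preserving the one-sided bounds and keeping the extra integral discrepancy small (possible since the step functions are uniformly bounded and the boundary strips can be taken thin). Both routes are legitimate and of comparable length; yours is somewhat more elementary, trading the semicontinuous-envelope and dominated-convergence machinery for Darboux sums, at the modest cost of the mollification-respecting-inequalities construction, which you correctly flag as the technically fussy point but which is indeed standard. The paper's monotone-approximation route avoids building explicit mollifiers but relies on the (equally standard, if less frequently written out) fact that a bounded upper semicontinuous function on a compact metric space is the decreasing pointwise limit of continuous ones.
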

\begin{proof}
We will follow a standard scheme which is known as the method of trigonometric sums in number theory \cite{Vinogradov} and as Weyl's equidistribution  criterion in the theory of weak convergence of measures \cite{Billingsley}.  More precisely, the proof will be carried out in three steps: we will establish (\ref{AfL}) for trigonometric polynomials $f$, then for continuous functions $f$ and finally, for  arbitrary functions $f$ satisfying the assumptions (a)--(c) of the theorem. Throughout the proof, $\Lambda \in \mR^{m \x n}$ is a nonresonant matrix in the sense of (\ref{LTu}).

{\bf Step 1.} Suppose $f: \mR^m \to \mC$ is a trigonometric polynomial,
that is, a linear combination of functions $T_{\omega}$ from (\ref{Tlambda}):
\begin{equation}
\label{ftripol}
    f
    :=
    \sum_{\omega \in \Omega}
    c_{\omega}T_{\omega}.
\end{equation}
Here, $c_{\omega}$ are complex coefficients, and $\Omega$ is a finite subset of $\mZ^n$,  which, without  loss of generality, is assumed to contain the zero vector. Then
\begin{equation}\label{c0}
    c_0 =
    \int_{[0,1]^m}
    f(u)
    \rd u.
\end{equation}
The composition $f \circ \Lambda$ of the function $f$ with the linear  map specified  by the matrix $\Lambda$  is also a trigonometric polynomial:
$$
    f \circ \Lambda =
    \sum_{\omega \in \Omega}
    c_{\omega} T_{\Lambda^{\rT}\omega}.
$$
By using Lemmas \ref{averfunprop} and \ref{tripol}, it follows that the function $f\circ \Lambda$ is averageable with the average value
\begin{equation}
\label{AfLambda}
    \bA
    (
        f \circ \Lambda
    ) =
    \sum_{\omega \in \Omega}
    c_{\omega}
    \bA
    (
        T_{\Lambda^{\rT}\omega}
    ) =
    \sum_{\omega \in \Omega}
    c_{\omega}
    \cI_{\mZ^n}
    (
        \Lambda^{\rT} \omega
    ).
\end{equation}
Under the nonresonance condition (\ref{LTu}) for $\Lambda$, the relation (\ref{AfLambda}) reduces to
$\bA(f\circ \Lambda) = c_0$, which  is equivalent to
(\ref{AfL}) in view of (\ref{c0}).

{\bf Step 2.} Suppose $f: \mR^m \to \mR$ is a unit periodic continuous
function. By the Weierstrass approximation theorem, for any $\eps >
0$, there exists a trigonometric polynomial $f_{\eps}: \mR^m \to
\mC$, defined by (\ref{ftripol}), such that
$$
    \max_{u \in [0,1]^m}
    |f(u)-f_{\eps}(u)| \< \eps.
$$
Therefore, by using the Lipschitz continuity of the upper average value functional (see Lemma~\ref{averfunprop}(a)) and Step~1, \begin{equation}\label{limsup}
    \bA^*(f \circ \Lambda)
    \<
    \bA
    (
        f_{\eps} \circ \Lambda
    ) + \eps =
    \int_{[0, 1]^m}
    f_{\eps}(u) \rd u + \eps
    \<
    \int_{[0, 1]^m}
    f(u) \rd u + 2 \eps.
\end{equation}
In view of the arbitrariness of  $\eps > 0$, it follows from (\ref{limsup}) that
\begin{equation}\label{upperest}
    \bA^*(f \circ \Lambda)
    \<
    \int_{[0, 1]^m}
    f(u) \rd u.
\end{equation}
A similar reasoning leads to
\begin{equation}\label{lowerest}
    \bA_*(f \circ \Lambda)
    \>
    \int_{[0, 1]^m}
    f(u) \rd u.
\end{equation}
The inequalities (\ref{upperest}) and (\ref{lowerest}) imply
the averagability of the function $f \circ \Lambda$, with the average value given by (\ref{AfL}).

{\bf Step 3.}
Now, let $f: \mR^m \to \mR$ be an arbitrary  function satisfying the conditions (a)--(c) of the theorem. Consider the functions $f^-, f^+ : \mR^m \to \mR$ defined by
$$
    f^-(u) =
    \min
    \left(
        f(u),
        \liminf_{v \to u}
        f(v)
    \right),
    \quad
    f^+(u) =
    \max
    \left(
        f(u),
        \limsup_{v \to u}
        f(v)
    \right).
$$
Both functions $f^-$ and $f^+$ are unit periodic in their $m$ variables, and are lower and upper semicontinuous, respectively. Also, they satisfy the inequalities $$ f^-(u) \< f(u) \< f^+(u) $$ which turn into equalities for $\mes_m$-almost all $u \in [0,1)^m$ in view of the $\mes_m$-continuity of the function $f$. Hence,  there exists a decreasing sequence of unit periodic continuous functions $f_k^+: \mR^m \to \mR$ which converge to the function $f^+$ point-wise in the cube $[0,1)^m$ and hence, to the function $f$ almost everywhere in $[0,1)^m$ as $k \to +\infty$. Therefore, by using Step 2, it follows that
\begin{equation}\label{buster1}
    \bA^*(f \circ \Lambda) \<
    \int_{[0, 1]^m}
    f_k^+(u)
    \rd u.
\end{equation}
Application of the Lebesgue dominated convergence theorem to the right-hand side of (\ref{buster1}) leads to the upper bound (\ref{upperest}).  The lower bound (\ref{lowerest}) is verified similarly. As before, (\ref{upperest}) and (\ref{lowerest}) imply the avaragability of the function $f \circ \Lambda$ together with (\ref{AfL}), thus completing the proof of the theorem. \end{proof}

\bigskip
 \begin{theorem}
 \label{periodicset}
For any nonresonant matrix $\Lambda \in \mR^{m \x n}$ and any Jordan measurable set $G \subset \mR^m$, the
$\Lambda$-quasiperiodic set $Q_m(G,\Lambda)$ in (\ref{QmGL}) is frequency measurable, and its frequency is computed as
\begin{equation}\label{FAG}
    \bF
    (
        Q_m(G,\Lambda)
    ) =
    \mes_m
    \left(
        V
        \bigcap
        (
            G+\mZ^m
        )
    \right),
\end{equation}
where $V \subset \mR^m$ is an arbitrary Lebesgue measurable cell.
\end{theorem}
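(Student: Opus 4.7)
The plan is to reduce this statement to Theorem~\ref{periodicfun} by observing that the indicator function of the quasiperiodic set factors through $\Lambda$: by the definition~(\ref{QmGL}),
$$
    \cI_{Q_m(G,\Lambda)}(x)
    =
    \cI_{G+\mZ^m}(\Lambda x)
    \quad\text{for all } x \in \mZ^n,
$$
so $\cI_{Q_m(G,\Lambda)} = f \circ \Lambda$ with $f := \cI_{G+\mZ^m}$. It then suffices to verify that $f$ meets the three hypotheses of Theorem~\ref{periodicfun}, and to rewrite the resulting average value in the form~(\ref{FAG}).

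First I would check the hypotheses. Boundedness of $f \in \{0,1\}$ is immediate. Unit periodicity, i.e.\ $f(u+z) = f(u)$ for all $u \in \mR^m$ and $z \in \mZ^m$, follows from the manifest translation invariance $G + \mZ^m + z = G + \mZ^m$. The only nontrivial point is the $\mes_m$-continuity of $f$. Here I would use Jordan measurability of $G$: by definition $\partial G$ has zero $m$-dimensional Lebesgue measure, and since the discontinuity set of $\cI_{G+\mZ^m}$ is contained in $\partial(G+\mZ^m) \subset \partial G + \mZ^m$, which is a countable union of translates of a $\mes_m$-null set, it too has zero $\mes_m$-measure. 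Thus $f$ is $\mes_m$-continuous.

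With the hypotheses of Theorem~\ref{periodicfun} in place, and $\Lambda$ nonresonant by assumption, that theorem yields averagability of $\cI_{Q_m(G,\Lambda)} = f \circ \Lambda$ together with
$$
    \bA(\cI_{Q_m(G,\Lambda)})
    =
    \int_{[0,1]^m} \cI_{G+\mZ^m}(u) \rd u
    =
    \mes_m\left([0,1]^m \bigcap (G+\mZ^m)\right).
$$
Averagability of the indicator is, by Definition~\ref{averset}, exactly frequency measurability of $Q_m(G,\Lambda)$, with $\bF(Q_m(G,\Lambda))$ equal to the right-hand side above. Finally, to obtain~(\ref{FAG}) for an arbitrary Lebesgue measurable cell $V$, I would invoke Lemma~\ref{cellprop2}(c) applied to the translation invariant and Lebesgue locally measurable set $G+\mZ^m$, which gives
$$
    \mes_m\left(V \bigcap (G+\mZ^m)\right)
    =
    \mes_m\left([0,1]^m \bigcap (G+\mZ^m)\right),
$$
completing the identification.

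The main obstacle is really just the $\mes_m$-continuity verification: it is essential that Jordan measurability (rather than mere Lebesgue measurability) of $G$ be used, so that $\partial G$ is $\mes_m$-null and the periodization $\partial G + \mZ^m$ inherits this property. Every other step is either a direct substitution into already proved results (Theorem~\ref{periodicfun}, Lemma~\ref{cellprop2}(c)) or a trivial consequence of the definitions.
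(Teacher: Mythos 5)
Your proposal is correct and follows essentially the same route as the paper: reduce to Theorem~\ref{periodicfun} via $\cI_{Q_m(G,\Lambda)} = \cI_{G+\mZ^m}\circ\Lambda$, check the three hypotheses (with Jordan measurability of $G$ supplying the $\mes_m$-continuity, exactly as you argue), and then pass from $[0,1]^m$ to a general cell $V$ by Lemma~\ref{cellprop2}(c). The only difference is that you spell out the local-finiteness argument behind $\partial(G+\mZ^m)\subset\partial G+\mZ^m$, which the paper leaves implicit.
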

\begin{proof}  The Jordan measurability of the set $G \subset \mR^m$ ensures that the indicator function $f := \cI_{G + \mZ^m} $ satisfies the conditions (a)--(c) of Theorem~\ref{periodicfun}. Hence, if the matrix $\Lambda \in \mR^{m \x n}$  is nonresonant, then Theorem~\ref{periodicfun} implies that the function
\begin{equation}\label{fLGZ}
    f \circ \Lambda  =
    \cI_{G + \mZ^m} \circ \Lambda
\end{equation}
is averageable, and
\begin{equation}\label{final}
    \bA
    (
        f \circ \Lambda
    ) =
    \int_{[0,1]^m}
    f(u) \rd u =
    \mes_m
    \left(
        [0,1)^m
        \bigcap
        (
            G+\mZ^m
        )
    \right).
\end{equation}
Therefore, since the right-hand side of (\ref{fLGZ}) is the indicator function of the set
$Q_m(G,\Lambda)$ in (\ref{QmGL}),  then this set is indeed frequency measurable,  and, in view of
(\ref{final}), its frequency is given by (\ref{FAG}) for any
Lebesgue measurable cell $V \subset \mR^m$, with the last property
following from Lemma~\ref{cellprop2}(c). \end{proof}

According to Theorem~\ref{periodicset}, if $\Lambda \in \mR^{m\x n}$ is a nonresonant matrix, then all the $\Lambda$-quasiperiodic sets, which form the algebra $\cQ_m(\Lambda)$ in (\ref{calQmL}), are frequency measurable subsets of the lattice $\mZ^n$. This result can be generalised to infinite-dimensional matrices  $\cL$ in (\ref{calL}) by appropriately extending Definition~\ref{nonres}.

\begin{definition}
\label{infnonres}
A matrix $\cL \in \mR^{\infty \x n}$ is said to be nonresonant if all its submatrices $\Lambda \in \mR^{m \x n}$ are nonresonant in the sense of Definition~\ref{nonres} for any $m\in \mN$.
\end{definition}
The existence of nonresonant matrices $\cL \in \mR^{\infty \x n}$ is established similarly to that in the case of finite-dimensional matrices.

%
%

\begin{theorem}
\label{calLquasiperprop} Suppose a matrix $\cL \in \mR^{\infty \x n}$ is nonresonant. Then the algebra $\cQ_{\infty}(\cL)$ of $\cL$-quasiperiodic sets in (\ref{calQcalL}) has the following properties
\begin{itemize}
\item[{\bf (a)}]
$\cQ _{\infty}(\cL)$ consists of frequency measurable subsets of
$\mZ^n$;
\item[{\bf (b)}]
for any $N \in \mN$ and any collection of $N$ pairwise nonoverlapping
submatrices $\cL_k \in \mR^{\infty \x n}$ of the matrix $\cL$,  the corresponding algebras
$\cQ_{\infty}(\cL_k)$ are mutually independent in the
sense that
$$ \bF\left(\bigcap_{k=1}^{N} A_k\right) =
\prod_{k=1}^{N} \bF(A_k)
$$ for any $A_k \in \cQ_{\infty}(\cL_k)$.
\end{itemize}
\end{theorem}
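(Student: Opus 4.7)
The plan is to reduce both parts of the theorem to Theorem~\ref{periodicset} through the structural properties of quasiperiodic sets established in Lemma~\ref{formalizm} and Lemma~\ref{Lquasiperprop}. Part (a) should be essentially immediate: by Definition~\ref{calLquasiper}, any $A\in \cQ_\infty(\cL)$ has the form $Q_m(G,\Lambda)$ for some Jordan measurable $G\subset[0,1)^m$ and some $(m\x n)$-submatrix $\Lambda$ of $\cL$. Since $\cL$ is nonresonant in the sense of Definition~\ref{infnonres}, the finite submatrix $\Lambda$ is nonresonant in the sense of Definition~\ref{nonres}, so Theorem~\ref{periodicset} applies and delivers frequency measurability of $A$ together with the explicit formula (\ref{FAG}).

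For part (b), the idea is to stack the pairwise nonoverlapping submatrices into a single nonresonant finite submatrix of $\cL$ and apply Lemma~\ref{formalizm}(e) together with Theorem~\ref{periodicset}. More precisely, fix arbitrary sets $A_k\in \cQ_\infty(\cL_k)$ for $k=1,\ldots,N$. Each $A_k$ is, by definition of $\cQ_\infty(\cL_k)$, of the form $A_k = Q_{m_k}(G_k,\Lambda_k)$ for some Jordan measurable $G_k\subset[0,1)^{m_k}$ and some $(m_k\x n)$-submatrix $\Lambda_k$ of $\cL_k$. Since the $\cL_k$ are pairwise nonoverlapping submatrices of $\cL$, so are the $\Lambda_k$, so that
$$
    \Lambda := \begin{bmatrix}\Lambda_1\\ \vdots \\ \Lambda_N\end{bmatrix}\in \mR^{(m_1+\cdots+m_N)\x n}
$$
is itself a submatrix of $\cL$ and is hence nonresonant by Definition~\ref{infnonres}. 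An iterated application of Lemma~\ref{formalizm}(e) then yields
$$
    \bigcap_{k=1}^{N} A_k = Q_{m_1+\cdots+m_N}\bigl(G_1\x\cdots\x G_N,\,\Lambda\bigr).
$$

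Applying Theorem~\ref{periodicset} to this nonresonant matrix $\Lambda$, together with the cell $V=[0,1)^{m_1+\cdots+m_N}$ (which contains $G_1\x\cdots\x G_N$, so no wrapping modulo $\mZ^{m_1+\cdots+m_N}$ is needed), yields frequency measurability of the intersection and
$$
    \bF\Bigl(\bigcap_{k=1}^{N}A_k\Bigr) = \mes_{m_1+\cdots+m_N}(G_1\x\cdots\x G_N) = \prod_{k=1}^{N}\mes_{m_k} G_k = \prod_{k=1}^{N}\bF(A_k),
$$
where the last equality uses Theorem~\ref{periodicset} once more, applied individually to each $A_k=Q_{m_k}(G_k,\Lambda_k)$ with the cell $[0,1)^{m_k}$.

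The only delicate point I anticipate is bookkeeping of the nonoverlap condition: I need to make sure that ``pairwise nonoverlapping submatrices $\cL_k$ of $\cL$'' is understood as the row index sets being pairwise disjoint, so that the stacked matrix $\Lambda$ is literally a submatrix of $\cL$ (possibly after a row permutation, which by Lemma~\ref{Lquasiperprop}(a) does not alter the algebra of quasiperiodic sets, nor the nonresonance property, since the latter is preserved under row permutations of $\cL$). Once this is clear, everything else is a straightforward unpacking of the definitions and a single invocation of Theorem~\ref{periodicset} on the combined matrix $\Lambda$.
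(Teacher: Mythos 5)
Your proof is correct and follows essentially the same route as the paper: reduce to a single finite nonresonant submatrix of $\cL$, use Lemma~\ref{formalizm}(e) to write the intersection as one quasiperiodic set with a product $G$, and then invoke Theorem~\ref{periodicset}. Your added remark on row permutations and the nonoverlap bookkeeping is a fair and correct clarification, though the paper takes it for granted.
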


\begin{proof}  The assertion (a) follows from Theorem~\ref{periodicset} and
from the property that for any $A \in \cQ_{\infty}(\cL)$, there exists
a finite-dimensional  submatrix $\Lambda \in \mR^{m \x n}$ of the matrix
$\cL$ such that $A \in \cQ_m(\Lambda)$.
In order to prove the assertion (b), we note that for any sets $A_k \in \cQ_{\infty}(\cL_k)$ described
in the theorem, there exist finite-dimensional submatrices
$\Lambda_k \in \mR^{m_k \x n}$ of the corresponding matrices $\cL_k$ (and hence, $\Lambda_1, \ldots, \Lambda_N$ are also non-overlapping submatrices of the matrix $\cL$),  and Jordan measurable sets $G_k \subset
[0,1)^{m_k}$  such that $A_k = Q_{m_k}(G_k, \Lambda_k)$.
Therefore, repeated application of Lemma~\ref{formalizm}(e) yields
\begin{equation}
\label{AAA}
    \bigcap_{k=1}^{N}
    A_k =
    Q_{m_1+\ldots +m_N}
    \left(
        G_1
        \x
        \ldots
        \x
        G_N,
            \begin{bmatrix}
            \Lambda_1 \\
            \vdots \\
            \Lambda_N
            \end{bmatrix}
    \right).
\end{equation}
Since the matrix $\cL$ is nonresonant, then the right-hand side of (\ref{AAA}) is a quasiperiodic set (associated  with a nonresonant matrix) whose frequency measurability  is guaranteed by Theorem~\ref{periodicset}, with
$$
    \bF
    \left(
    \bigcap_{k=1}^{N}
    A_k
    \right)
    =
    \prod_{k=1}^{N}
    \mes_{m_k}G_k
    =
    \prod_{k=1}^{N}
    \bF ( A_k ),
$$
thus completing the proof of Theorem~\ref{calLquasiperprop}. \end{proof}

\subsection{Distributed quasiperiodic maps}
\label{DQM}

\begin{theorem}\label{periodicmap}
Suppose a matrix $\Lambda \in \mR^{m \x n}$ is nonresonant in the sense of Definition~\ref{nonres}. Also, let $g: \mR^m \to X$ be a map with values in a metric space $X$ such that
\begin{itemize}
\item[{\bf (a) }]
$g$ is unit periodic with respect to  its $m$ variables;
\item[{\bf (b) }]
$g$ is $\mes_m$-continuous.
\end{itemize}
Then the composition $g \circ \Lambda: \mZ^n \to X$ is $D$-distributed, with $D$ a probability measure on $(X,\cB)$ given by
\begin{equation}\label{mesD}
    D(B)
    :=
    \mes_m
    \left(
        V \bigcap g^{-1}(B)
    \right),
\end{equation}
where $V \subset \mR^m$ is an arbitrary Lebesgue measurable
cell. Furthermore, the algebra $\cS_{g \circ \Lambda}$, generated by the map $g\circ \Lambda$, consists of $\Lambda$-quasiperiodic sets:
\begin{equation}\label{calSgL}
    \cS_{g \circ \Lambda}
    \subset
    \cQ_m(\Lambda).
\end{equation}
\end{theorem}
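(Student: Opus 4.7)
The plan is to combine Theorem~\ref{periodicfun} with the criteria of Lemma~\ref{distrcriteria} and then extract the algebra inclusion by a boundary-analysis argument. First I would verify that $D$ is a well-defined countably additive probability measure on $(X,\cB)$ whose definition is independent of the cell $V$. Since $g$ is unit periodic, the preimage $g^{-1}(B)$ of any Borel set $B$ is translation invariant in the sense $g^{-1}(B) + \mZ^m = g^{-1}(B)$, so Lemma~\ref{cellprop2}(c) gives
\[
    \mes_m(V \cap g^{-1}(B))
    =
    \mes_m([0,1]^m \cap g^{-1}(B))
\]
for every Lebesgue measurable cell $V$; countable additivity is inherited from $\mes_m$, and $D(X)=1$ follows from $\mes_m[0,1)^m = 1$.

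Next I would invoke criterion (a) of Lemma~\ref{distrcriteria}. Fix any bounded continuous $f: X \to \mR$ and consider $h := f \circ g: \mR^m \to \mR$. Then $h$ is bounded, unit periodic in its $m$ variables (from $g$), and $\mes_m$-continuous, because the discontinuity set of $h$ is contained in the discontinuity set of $g$ (continuity of $f$ passes discontinuities backward), which has $\mes_m$-measure zero by hypothesis (b). Hence $h$ satisfies the three conditions of Theorem~\ref{periodicfun}, and since $\Lambda$ is nonresonant, $h \circ \Lambda = f\circ g \circ \Lambda$ is averageable with
\[
    \bA(f \circ g \circ \Lambda)
    =
    \int_{[0,1]^m} f(g(u))\,\rd u
    =
    \int_X f(y)\,D(\rd y),
\]
the last step being the change-of-variables formula against the push-forward measure, which coincides with (\ref{mesD}) for $V = [0,1)^m$. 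Lemma~\ref{distrcriteria} then yields that $g \circ \Lambda$ is $D$-distributed.

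For the inclusion (\ref{calSgL}), take any $D$-continuous Borel set $B \subset X$ and define $G := [0,1)^m \cap g^{-1}(B)$. Translation invariance of $g^{-1}(B)$ combined with Lemma~\ref{cellprop2}(b) gives $g^{-1}(B) = G + \mZ^m$, and consequently
\[
    (g \circ \Lambda)^{-1}(B)
    =
    \{x \in \mZ^n:\ \Lambda x \in G + \mZ^m\}
    =
    Q_m(G, \Lambda).
\]
It remains to establish Jordan measurability of $G$, i.e.\ $\mes_m(\partial G) = 0$. The boundary of $G$ (in $\mR^m$) lies in $\partial [0,1)^m \cup \partial g^{-1}(B)$, and the first set is Lebesgue-null. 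For the second, at any point $u$ where $g$ is continuous with $g(u) \notin \partial B$, some neighbourhood of $u$ lies entirely inside $g^{-1}(B)$ or entirely inside its complement; hence $\partial g^{-1}(B) \subset g^{-1}(\partial B) \cup \mathrm{Disc}(g)$. The $\mes_m$-continuity of $g$ makes $\mathrm{Disc}(g)$ Lebesgue-null, while the $D$-continuity of $B$ forces $\mes_m([0,1)^m \cap g^{-1}(\partial B)) = D(\partial B) = 0$. Thus $\mes_m(\partial G) = 0$, so $G$ is Jordan measurable and $(g \circ \Lambda)^{-1}(B) \in \cQ_m(\Lambda)$, proving (\ref{calSgL}).

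The main obstacle I anticipate is the last paragraph: verifying that $G$ is Jordan measurable rather than merely Lebesgue measurable, which requires handling the two distinct sources of the boundary — the discontinuities of $g$ (controlled by $\mes_m$-continuity) and the topological boundary $\partial B$ (controlled by $D$-continuity of $B$) — within a single cell. The earlier steps are essentially routine once the measure $D$ is shown to be well defined and Theorem~\ref{periodicfun} is applied to $f \circ g$.
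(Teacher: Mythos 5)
Your proof is correct, but it takes a genuinely different route from the paper's. The paper factors the composition as $g\circ\Lambda = g\circ(\mod_m\circ\Lambda)$: it first applies Theorem~\ref{periodicfun} to $\phi\circ\mod_m$ (with $\phi$ bounded continuous on $[0,1)^m$) and Lemma~\ref{distrcriteria}(a) to show that the fractional-part map $\mod_m\circ\Lambda$ is $\mes_m$-distributed with generated algebra exactly $\cQ_m(\Lambda)$, and then it invokes the composition Lemma~\ref{mapcomposition} (with $g_1=\mod_m\circ\Lambda$, $g_2=g$) to transfer both the $D$-distributedness and the inclusion $\cS_{g\circ\Lambda}\subset\cS_{\mod_m\circ\Lambda}=\cQ_m(\Lambda)$. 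You instead apply Lemma~\ref{distrcriteria}(a) directly to $g\circ\Lambda$ by checking that $h:=f\circ g$ inherits boundedness, periodicity and $\mes_m$-continuity so that Theorem~\ref{periodicfun} can be applied to $h\circ\Lambda$, and you then obtain (\ref{calSgL}) by the explicit identification $(g\circ\Lambda)^{-1}(B)=Q_m(G,\Lambda)$ with $G:=[0,1)^m\cap g^{-1}(B)$ and the boundary estimate $\partial G\subset\partial[0,1)^m\cup g^{-1}(\partial B)\cup\mathrm{Disc}(g)$. Your direct route is self-contained and makes the Jordan-measurability mechanism fully visible, at the price of re-deriving inline the boundary argument that the paper has already packaged inside the proof of Lemma~\ref{mapcomposition}; the paper's factorization is more modular and, as a byproduct, records the identity $\cS_{\mod_m\circ\Lambda}=\cQ_m(\Lambda)$, which it quotes later. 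One small point worth making explicit in your last paragraph: while $g^{-1}(\partial B)$ is in general an unbounded set of infinite measure, you only need $\mes_m$ of its intersection with the bounded set $[0,1]^m$ (which contains $\partial G$), and this equals $D(\partial B)=0$ up to the null set $[0,1]^m\setminus[0,1)^m$.
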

\begin{proof}
For what follows, let $ \mod_m: \mR^m \to [0,1)^m $ denote the map which sends a vector $u := (u_k)_{1 \< k \< m}$ to the vector
$$
    \mod_m(u) :=
    (\lfp u_k \rfp )_{1 \< k \< m},
$$
where $\lfp \cdot \rfp$ denotes the fractional part of a number.  The map $\mod_m$ is unit periodic with respect to its $m$ variables  and identically maps the cube $[0,1)^m$ onto itself. We will now prove that if $\Lambda \in \mR^{m \x n}$ is a nonresonant matrix, then the composition $\mod_m \circ \Lambda: \mZ^m \to [0,1)^m$ is $\mes_m$-distributed. To this end, note that, for any bounded continuous function  $\phi: [0,1)^m \to \mR$, the function $ f := \phi \circ \mod_m $ satisfies the conditions (a)--(c) of Theorem~\ref{periodicfun} and hence, the function $f \circ \Lambda$ is averageable, with
\begin{equation}\label{AphimodL}
    \bA
    (
        \phi \circ \mod_m \circ \Lambda
    ) =
    \int_{[0,1]^m}
    f(u) \rd u =
    \int_{[0,1]^m}
    \phi(u)
    \rd u .
\end{equation}
Since the function $\phi$ is otherwise arbitrary, then, in view of the criterion (a) of Lemma~\ref{distrcriteria}, it follows from (\ref{AphimodL}) that the map $\mod_m \circ \Lambda: \mZ^n \to [0,1)^m$ is $\mes_m$-distributed. Furthermore, application of Definition~\ref{distribution} yields
\begin{equation}\label{calSmodL}
    \cS_{\mod_m \circ \Lambda} =
    \cQ _m(\Lambda).
\end{equation}
Now, suppose a map $g: \mR^m \to X$ with values in a metric space $X$ satisfies the assumptions (a) and (b) of the theorem. In particular, the unit periodicity of $g$ implies that
\begin{equation}
\label{gmodL}
    g \circ \Lambda =
    g \circ \mod_m \circ \Lambda.
\end{equation}
Since  the map $g$ is $\mes_m$-continuous,  and $\mod_m \circ \Lambda$ has been proved above to be $\mes_m$-distributed, then, in view of Lemma~\ref{mapcomposition},  the map $g \circ \Lambda$ is $D$-distributed, where $D$ is a probability measure on $(X, \cB)$ given by
\begin{equation}\label{mesD1}
    D(B) =
    \mes_m
    \left(
        [0,1)^m
        \bigcap
        g^{-1}(B)
    \right).
\end{equation}
Due to the unit periodicity of $g$, the preimage $g^{-1}(B)$ of any set $B \subset X$ is a translation invariant subset of $\mR^m$. Hence, in view of the assertion (c) of Lemma~\ref{cellprop2}, the right-hand side of (\ref{mesD1}) coincides with that of (\ref{mesD}) for any Lebesgue measurable cell $V \subset \mR^m$. Moreover, by Lemma~\ref{mapcomposition},  it follows from (\ref{gmodL})  that $\cS_{g \circ \Lambda} \subset \cS _{\mod_m \circ\Lambda}$. The latter inclusion, combined with (\ref{calSmodL}), implies (\ref{calSgL}), thus completing the proof of the theorem. \end{proof}

\section{Quantized linear systems: general case}
\label{QLSGC}
\setcounter{equation}{0}

\subsection
{Definition of a quantized linear system}
\label{DQLS}

Suppose $R: \mR^n \to \mZ^n$ is a map which commutes with the additive group of translations of the lattice $\mZ^n$:
    \begin{equation}\label{commutation}
        R(u+z) = R(u) + z
        \quad
        {\rm for\ all}\
        u \in \mR^n,\ z \in \mZ^n.
    \end{equation}
Such a map $R$ is completely specified by the set $R^{-1}(0)$ which is a cell in $\mR^n$ in the sense of Definition~\ref{cell}. Indeed, in view of (\ref{commutation}), the preimages
$$
    R^{-1}(z) := \{ u \in \mR^n:\ R(u) = z
\} = R^{-1}(0)
+ z,
$$
considered for all $z\in \mZ^n$,  form a partition of $\mR^n$. Moreover, for a given set $V \subset \mR^n$, there exists a unique map $R: \mR^n \to \mZ^n$, satisfying (\ref{commutation}) with $R^{-1}(0) = V$, if and only if $V$ is a cell in $\mR^n$.

\begin{definition}\label{quantizer}
A map $R: \mR^n \to \mZ^n$ is called a quantizer if it satisfies
(\ref{commutation}) and the set $R^{-1}(0)$ is Jordan measurable.
\end{definition}

There is a one-to-one correspondence between quantizers and Jordan measurable cells in $\mR^n$.  An example of a quantizer is as follows.

\begin{definition}\label{rquantizer}
The quantizer $R_*: \mR^n \to \mZ^n$, with $R_*^{-1}(0) = [-
1/2, 1/2)^n$, is called the roundoff quantizer.
\end{definition}

The roundoff quantizer $R_*$ maps a vector $u := (u_k)_{1 \< k \< n} \in \mR^n$ to a nearest node of  the lattice $\mZ^n$ given by
$$
    R_*(u)
    :=
    \left( \left\lfloor u_k + \frac{1}{2} \right\rfloor \right)_{1 \< k \< n},
$$
with $\lfloor \cdot\rfloor$ the floor function. This is an idealised   model of discretization in fixed-point arithmetic (with no overflow taken into account). In the framework of this model, there are more complicated quantizers $R$ whose cells $R^{-1}(0)$ are different from the cube $[-1/2, 1/2)^n$,  as discussed in Section~\ref{Cells}.

\begin{definition}\label{system}
Suppose $R:\mR^n \to \mZ^n$ is a quantizer and $L \in \mR^{n \x n}$ is a nonsingular matrix. The dynamical system  in the state space $\mZ^n$, with the transition operator $T: \mZ^n\to \mZ^n$ given by
    \begin{equation}\label{trans}
        T := R \circ L,
    \end{equation}
is called a quantized linear $(R,L)$-system.
\end{definition}
The map $T$,  defined by (\ref{trans}), provides a model for the spatially discretized dynamical system which arises in simulating a linear system in $\mR^n$ (specified by the matrix $L$) on a computer with fixed-point arithmetic.

\subsection{Associated, backward and forward algebras}
\label{ABFA}

The following lemma shows that the class of quasiperiodic subsets of the lattice, introduced in Section~\ref{QS}, is closed under the dynamics of the quantized linear system  in the sense that quasiperiodic sets are transformed to quasiperiodic sets.

\begin{lemma}\label{formalism2}
For any $m \in \mN$, any set $G \subset \mR^m$ and any matrix
$\Lambda \in \mR^{m\x n}$, the quasiperiodic set $Q_m(G,\Lambda)$ in (\ref{QmGL}) is transformed
by the transition operator $T$ in (\ref{trans}) and its set-valued inverse $T^{-1}$
as
\begin{align}
    \label{T-1QmGL}
        T^{-1}(Q_m(G,\Lambda))
        &=
        Q_{n+m}
        \left(
            \{0\}\x G +
                \begin{bmatrix}
                    I_n \\ \Lambda
                \end{bmatrix}
            R^{-1}(0),
                \begin{bmatrix}
                    I_n \\ \Lambda
                \end{bmatrix}
                L
        \right),\\
    \label{TQmGL}
        T(Q_m(G,\Lambda))
        &=
        Q_{m+n}
        \left(
            G \x \{0\}
            -
                \begin{bmatrix}
                    \Lambda
                    \\
                    I_n
                \end{bmatrix} L^{-1} R^{-1}(0),
                \begin{bmatrix}
                    \Lambda \\ I_n
                \end{bmatrix}
                L^{-1}
        \right).
    \end{align}
\end{lemma}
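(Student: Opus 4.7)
The plan is to unfold the definition (\ref{QmGL}) of each quasiperiodic set and repackage the resulting condition as a quasiperiodic set associated with an augmented matrix.

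For (\ref{T-1QmGL}), I would fix $x\in\mZ^n$ and observe that $x\in T^{-1}(Q_m(G,\Lambda))$ iff $R(Lx)\in Q_m(G,\Lambda)$, i.e.\ $\Lambda R(Lx)\in G+\mZ^m$. Since $R^{-1}(0)$ is a cell in $\mR^n$, the vector $u:=Lx-R(Lx)$ lies in $R^{-1}(0)$ and $Lx-u=R(Lx)\in \mZ^n$. Thus $x$ lies in $T^{-1}(Q_m(G,\Lambda))$ iff there exists $u\in R^{-1}(0)$ such that simultaneously
$$
Lx-u\in\mZ^n,\qquad \Lambda(Lx-u)\in G+\mZ^m.
$$
Stacking these two conditions into a single relation on $\begin{bmatrix} I_n\\\Lambda\end{bmatrix}Lx\in\mR^{n+m}$, they are equivalent to
$$
\begin{bmatrix} I_n\\ \Lambda\end{bmatrix}Lx\in \{0\}\x G+\begin{bmatrix} I_n\\ \Lambda\end{bmatrix}u+\mZ^{n+m}\subset \{0\}\x G+\begin{bmatrix} I_n\\ \Lambda\end{bmatrix}R^{-1}(0)+\mZ^{n+m},
$$
which, by (\ref{QmGL}), is precisely membership of $x$ in the right-hand side of (\ref{T-1QmGL}). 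The reverse containment uses the same equivalence and the uniqueness of the representative $u\in R^{-1}(0)$ from the cell property.

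For (\ref{TQmGL}), I would unpack the forward image. A vector $y\in\mZ^n$ belongs to $T(Q_m(G,\Lambda))$ iff there exists $x\in\mZ^n$ with $\Lambda x\in G+\mZ^m$ and $R(Lx)=y$; the latter is equivalent to $u:=Lx-y\in R^{-1}(0)$, i.e.\ $x=L^{-1}(y+u)$. Substituting this into the two conditions $x\in\mZ^n$ and $\Lambda x\in G+\mZ^m$ gives
$$
L^{-1}y\in-L^{-1}u+\mZ^n,\qquad \Lambda L^{-1}y\in G-\Lambda L^{-1}u+\mZ^m,
$$
which combine into
$$
\begin{bmatrix}\Lambda\\ I_n\end{bmatrix}L^{-1}y\in G\x\{0\}-\begin{bmatrix}\Lambda\\ I_n\end{bmatrix}L^{-1}u+\mZ^{m+n}
$$
for some $u\in R^{-1}(0)$, which by (\ref{QmGL}) is exactly the statement that $y$ belongs to the right-hand side of (\ref{TQmGL}).

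The principal thing to get right is bookkeeping: matching the block-row ordering $\begin{bmatrix} I_n\\\Lambda\end{bmatrix}$ versus $\begin{bmatrix} \Lambda\\ I_n\end{bmatrix}$ with the corresponding ordering $\{0\}\x G$ versus $G\x\{0\}$, and making sure that the ``existence of $u\in R^{-1}(0)$'' quantifier is absorbed into the Minkowski translate $\pm\begin{bmatrix}\cdot\\\cdot\end{bmatrix}R^{-1}(0)$. The cell property of $R^{-1}(0)$ is what makes the equivalences genuine bi-implications rather than mere inclusions: in (\ref{T-1QmGL}) it guarantees a unique $u$ for each $x$, and in (\ref{TQmGL}) it reflects the fact that $R(L\cdot)$ is surjective onto those $y$ for which the integrality constraint $L^{-1}(y+u)\in\mZ^n$ can be met. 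No asymptotic or measure-theoretic input is needed; the argument is a purely formal unwinding of definitions using the same style of manipulations as in the proof of Lemma~\ref{formalizm}.
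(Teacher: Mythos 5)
Your proof is correct and follows essentially the same approach as the paper's: introduce the auxiliary roundoff residual $u$ (the paper's $v$), unwind the membership condition into the two block conditions, and repackage them as a single $(n+m)$- or $(m+n)$-dimensional quasiperiodic membership statement, with the cell property of $R^{-1}(0)$ supplying the bi-implication. The only cosmetic difference is that the paper writes both directions as a symmetric "iff there exist $u,v,y,z$" quantification, while you isolate the quantization residual $u$ and then reason about it directly; the substance is identical.
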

\begin{proof}
The definition (\ref{QmGL}) implies that a point $x \in \mZ^n$ belongs to the set on the right-hand side of (\ref{T-1QmGL}) if and only if there exist
    \begin{equation}\label{uvyz1}
        u \in G,
        \quad
        v \in R^{-1}(0),
        \quad
        y \in \mZ^n,
        \quad
        z \in \mZ^m
    \end{equation}
satisfying
    $$
            \begin{bmatrix}
                I_n \\ \Lambda
            \end{bmatrix}
            L x
            =
            \begin{bmatrix}
                0 \\ u
            \end{bmatrix}
            +
            \begin{bmatrix}
                I_n \\ \Lambda
            \end{bmatrix}v
            +
            \begin{bmatrix}
                y \\ z
            \end{bmatrix}.
    $$
In terms of the corresponding subvectors, the last equality is equivalent to
    \begin{equation}\label{uvyz2}
        L x = v + y,
        \qquad
        \Lambda(Lx-v)= \Lambda y = u + z.
    \end{equation}
On the other hand, in view of (\ref{uvyz1}), the leftmost equality in (\ref{uvyz2}) is equivalent to $T(x)=y$, whereas the rightmost equality in (\ref{uvyz2}) is equivalent to $y \in Q_m(G,\Lambda)$. Therefore, the fulfillment of (\ref{uvyz1}) and (\ref{uvyz2}) is equivalent to $T(x) \in Q_m(G,\Lambda)$, that is, $x \in T^{-1}(Q_m(G,\Lambda))$.
Since the point $x$ in the above considerations was arbitrary,
the set on the right-hand side of (\ref{T-1QmGL}) indeed coincides with $T^{-1}(Q_m(G,\Lambda))$.
Although the proof of  the representation (\ref{TQmGL}) is similar, we will provide it for completeness of exposition.  A point $x \in \mZ^n$
belongs to the set on the right-hand side of (\ref{TQmGL}) if and only if there
exist
    \begin{equation}\label{uvyz3}
        u \in G,
        \quad
        v \in R^{-1}(0),
        \quad
        y \in \mZ^n,
        \quad
        z \in \mZ^m
    \end{equation}
satisfying
    $$
            \begin{bmatrix}
                \Lambda \\ I_n
            \end{bmatrix}
            L^{-1} x =
            \begin{bmatrix}
                u \\ 0
            \end{bmatrix}
            -
            \begin{bmatrix}
                \Lambda \\ I_n
            \end{bmatrix}
            L^{-1} v +
            \begin{bmatrix}
                z \\ y
            \end{bmatrix}.
    $$
The last relation is equivalent to
    \begin{equation}\label{uvyz4}
        L y = v + x,
        \qquad
        \Lambda (L^{-1}x+L^{-1}v)=\Lambda y = u + z.
    \end{equation}
On the other hand, in view of (\ref{uvyz3}), the leftmost equality in (\ref{uvyz4}) is equivalent to
$ T(y) = x$, whilst the rightmost equality in (\ref{uvyz4}) is equivalent to $y \in Q_m(G, \Lambda) $. Therefore, the fulfillment of (\ref{uvyz3}) (\ref{uvyz4}) is equivalent to  $x \in T(Q_m(G,L))$. Hence, by the arbitrariness of the point $x$,  it follows that the set on the right-hand side of
(\ref{TQmGL}) is equal to $T(Q_m(G,\Lambda))$, thus completing the proof of the lemma. \end{proof}

Lemma~\ref{formalism2} shows that the action of the transition operator $T$ of the quantized linear system or its inverse  $T^{-1}$ on a quasiperiodic set $Q_m(G,\Lambda)$ modifies the matrix $\Lambda$, thus leading to a set with a qualitatively different quasiperiodicity pattern.  However, an algebraic closedness can be achieved here by restricting $\Lambda \in \mR^{m\x n}$ to submatrices of an infinite matrix $\cL \in \mR^{\infty \x n}$ which satisfies the property
\begin{equation}
\label{closure}
    \Lambda
    \lll \cL
    \qquad \Longrightarrow\qquad
                \begin{bmatrix}
                    I_n \\ \Lambda
                \end{bmatrix}
                L \lll \cL
                \ {\rm and}\
                \begin{bmatrix}
                    \Lambda \\ I_n
                \end{bmatrix}
                L^{-1}
                \lll \cL,
\end{equation}
where $A\lll B$ signifies ``$A$ is a submatrix of $B$'' for matrices with the same number of columns $n$. In particular, (\ref{closure}) implies that such a matrix $\cL$ must contain $L$ and $L^{-1}$ as submatrices. Hence, by induction, the minimal matrix $\cL$, which satisfies (\ref{closure}), is formed from integer powers of $L$. Moreover, in view of Lemma~\ref{Lquasiperprop}(c),  the zeroth power $L^0=I_n$ is redundant and can be discarded without affecting the algebras of quasiperiodic sets. Thus, for what follows, we define the matrix
    \begin{equation}\label{degL}
        \cL
        :=
        \begin{bmatrix}
        \vdots \\
        L^{-2} \\
        L^{-1} \\
        L \\
        L^2 \\
        \vdots
        \end{bmatrix}
        =
        \begin{bmatrix}
        \cL ^- \\
        \cL ^+
        \end{bmatrix} \in \mR^{\infty \x n},
    \end{equation}
which is obtained by ``stacking" nonzero integer powers of $L$ one underneath the other and is partitioned into the submatrices
    \begin{equation}\label{deg-+L}
        \cL ^-
        :=
        \begin{bmatrix}
        \vdots \\
        L^{-2} \\
        L^{-1} \\
        \end{bmatrix},
        \qquad
        \cL ^+
        :=
        \begin{bmatrix}
        L \\
        L^2 \\
        \vdots
        \end{bmatrix}.
    \end{equation}
Also, we denote by
    \begin{equation}\label{assalgebra}
        \cQ
        :=
        \cQ _{\infty}(\cL ),
    \end{equation}
    \begin{equation}\label{backforalgebras}
        \cQ ^- :=
        \cQ _{\infty}(\cL ^-),
        \qquad
        \cQ ^+ :=
        \cQ _{\infty}(\cL ^+)
    \end{equation}
the algebras of quasiperiodic subsets of the lattice $\mZ^n$ generated by the
matrices (\ref{degL}) and (\ref{deg-+L}), so that $\cQ ^-$ and
$\cQ ^+$ are subalgebras of $\cQ $.

\begin{definition}\label{}
For a given quantized linear $(R,L)$-system, the algebra $\cQ$ of $\cL $-quasipe\-riodic sets in
(\ref{assalgebra}),
generated by the matrix $\cL$ in (\ref{degL}), is called an
associated algebra.   The algebras $\cQ^-$ and $\cQ^+$ of
$\cL ^-$- and $\cL ^+$-quasiperiodic sets in (\ref{backforalgebras}), generated by the
matrices $\cL^-$ and $\cL^+$ in (\ref{deg-+L}), are called backward and forward
algebras, respectively.
\end{definition}
In order to clarify the structure of the associated, backward and forward algebras $\cQ$, $\cQ^-$ and $\cQ^+$, we define, for any $a, b \in
\mN$, the matrix
\begin{equation}\label{calLab}
    \cL _{a,b}
    :=
        \begin{bmatrix}
        L^{-a}\\
        \vdots \\
        L^{-1} \\
        L \\
        \vdots \\
        L^b
        \end{bmatrix}
        =
        \begin{bmatrix}
        \cL _a^- \\
        \cL _b^+
        \end{bmatrix}
        \in \mR^{(a+b)n \x n}
\end{equation}
which is partitioned into submatrices $\cL _a^-$ and $\cL _b^+$ given by
\begin{equation}\label{calLN}
    \cL _N^-
    :=
        \begin{bmatrix}
        L^{-N} \\
        \vdots \\
        L^{-1}
        \end{bmatrix},
    \qquad
    \cL _N^+
    :=
        \begin{bmatrix}
        L \\
        \vdots \\
        L^N
        \end{bmatrix}
\end{equation}
for any $N \in \mN$. We will now consider the corresponding algebras of quasiperiodic subsets of the lattice $\mZ^n$:
\begin{equation}\label{calQab}
    \cQ _{a,b} := \cQ _{(a+b)n} (\cL _{a,b}),
\end{equation}
\begin{equation}\label{calQN}
    \cQ _N^- := \cQ _{Nn}(\cL _N^-),
    \qquad
    \cQ _N^+ := \cQ _{Nn}(\cL _N^+).
\end{equation}
These algebras are monotonically increasing with respect to the corresponding subscripts in the sense that
$$
    \cQ_{a,b} \subset \cQ_{a+1,b},
    \quad
    \cQ_{a,b} \subset \cQ_{a,b+1},
    \quad
    \cQ_{a,b} \subset \cQ_{a+1,b+1},
    \quad
    \cQ_N^- \subset \cQ_{N+1}^-,
    \quad
    \cQ_N^+ \subset \cQ_{N+1}^+
$$
for all $a,b, N \in \mN$. Furthermore, the algebras $\cQ$, $\cQ^-$ and $\cQ^+$ in (\ref{assalgebra})
and (\ref{backforalgebras}) are representable in terms of (\ref{calQab}) and (\ref{calQN}) as
$$
    \cQ = \bigcup_{a,b \> 1} \cQ _{a,b},
    \qquad
    \cQ ^- = \bigcup_{N \> 1} \cQ _N^-,
    \qquad
    \cQ ^+ = \bigcup_{N \> 1} \cQ _N^+.
$$

\begin{lemma}\label{algebraprop2}
The algebras $\cQ_{a,b}$, $\cQ_N^-$ and $\cQ_N^+$ in (\ref{calQab}) and (\ref{calQN}) are
transformed by the transition operator $T$ in (\ref{trans}) and its
set-valued inverse $T^{-1}$ as follows:
\begin{itemize}
\item[{\bf (a)}]
for any $a,b \in \mN$,
\begin{equation}\label{invTcalQab}
    T^{-1}
    (
        \cQ _{a,b}
    )
    \subset
    \cQ _{a-1,b+1},
    \qquad
    T
    (
        \cQ _{a,b}
    )
    \subset
    \cQ _{a+1,b-1};
\end{equation}
\item[{\bf (b)}]
for any $N \in \mN$,
\begin{equation}\label{invTcalQN}
    T^{-1}
    (
        \cQ _N^+
    )
    \subset
    \cQ _{N+1}^+,
    \qquad
    T
    (
        \cQ _N^-
    )
    \subset
    \cQ _{N+1}^- .
\end{equation}
\end{itemize}
\end{lemma}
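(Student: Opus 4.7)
The plan is to apply Lemma~\ref{formalism2} directly to each quasiperiodic set, compute the matrix that appears under $T^{-1}$ or $T$, and then identify it (possibly after trivial simplifications) as the matrix $\cL_{a\mp 1,b\pm 1}$ or $\cL_{N+1}^{\pm}$ defining the target algebra. In every case the integer-power structure of $\cL$ makes the row exponents shift by $\pm 1$, so the verification is essentially exponent bookkeeping. Jordan measurability of the shifted ``$G$''-sets in (\ref{T-1QmGL})--(\ref{TQmGL}) is automatic, since it is preserved under affine images and Minkowski sums with the cell $R^{-1}(0)$ (Jordan measurable by the quantizer assumption).

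First I would dispose of part (b), which is the cleanest case. Fix $A\in \cQ_N^+$, so $A=Q_{Nn}(G,\cL_N^+)$ for some Jordan measurable $G\subset [0,1)^{Nn}$. Applying (\ref{T-1QmGL}) gives
$$
T^{-1}(A)=Q_{(N+1)n}\!\left(G', \begin{bmatrix} I_n \\ \cL_N^+\end{bmatrix}L\right),
$$
and a direct computation shows $\begin{bmatrix} I_n \\ \cL_N^+\end{bmatrix}L=\cL_{N+1}^+$, since $I_n\cdot L=L$ and $L^j\cdot L=L^{j+1}$ for $j=1,\dots,N$ produce exactly the row sequence $L,L^2,\dots,L^{N+1}$. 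Hence $T^{-1}(A)\in \cQ_{N+1}^+$. Symmetrically, applying (\ref{TQmGL}) to $A\in\cQ_N^-$ yields $\begin{bmatrix} \cL_N^- \\ I_n\end{bmatrix}L^{-1}=\cL_{N+1}^-$, so $T(A)\in \cQ_{N+1}^-$.

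For part (a), take $A=Q_{(a+b)n}(G,\cL_{a,b})\in \cQ_{a,b}$. By (\ref{T-1QmGL}),
$$
T^{-1}(A)=Q_{(a+b+1)n}\!\left(G',M\right),\qquad M:=\begin{bmatrix} I_n \\ \cL_{a,b}\end{bmatrix}L.
$$
Multiplying out, the $n$-row blocks of $M$ are, top to bottom,
$$
L,\ L^{-a+1},L^{-a+2},\dots,L^{-1},\ I_n,\ L^2,L^3,\dots,L^{b+1},
$$
the interior $I_n$ block arising from $L^{-1}\!\cdot L$. By Lemma~\ref{Lquasiperprop}(a), permuting these blocks by a unimodular matrix leaves the algebra unchanged; after moving the $I_n$ block to the top and applying Lemma~\ref{Lquasiperprop}(c) to remove it, the remaining blocks, suitably reordered, are precisely
$$
L^{-a+1},L^{-a+2},\dots,L^{-1},L,L^2,L^3,\dots,L^{b+1},
$$
which is $\cL_{a-1,b+1}$. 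Therefore $T^{-1}(A)\in \cQ_{(a+b)n}(\cL_{a-1,b+1})=\cQ_{a-1,b+1}$. The second inclusion in (\ref{invTcalQab}) is obtained by the mirror argument from (\ref{TQmGL}), with the intermediate $I_n$ block now produced by $L\cdot L^{-1}$ and eliminated in the same manner, the remaining rows assembling to $\cL_{a+1,b-1}$. Edge cases $a=1$ or $b=1$ reduce to one-sided versions already covered by part (b), under the natural convention $\cQ_{0,b}=\cQ_b^+$, $\cQ_{a,0}=\cQ_a^-$.

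The main obstacle is genuinely mild: it is the appearance of an $I_n$ row block in the interior of the product matrix $\cL_{a,b}L$ (or $\cL_{a,b}L^{-1}$). Without an absorption lemma this would put us in $\cQ_{(a+b+1)n}$ rather than $\cQ_{(a+b)n}$, so the whole conclusion rests on the combination of Lemma~\ref{Lquasiperprop}(a) (to move the $I_n$ block to the top) and Lemma~\ref{Lquasiperprop}(c) (to strip it). Once this is observed, the rest of the argument is an exact match of exponents.
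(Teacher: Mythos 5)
Your proof is correct and follows essentially the same route as the paper's: apply Lemma~\ref{formalism2}, observe that the resulting matrix is a row permutation of $\begin{bmatrix}I_n \\ \cL_{a\mp 1, b\pm 1}\end{bmatrix}$ (with the interior $I_n$ block arising from $L^{\mp1}\cdot L^{\pm1}$), move the $I_n$ block to the top by a unimodular permutation and strip it, and record that Jordan measurability of the resulting $G$-sets is preserved. The only cosmetic difference is that you invoke the algebra-level statements of Lemma~\ref{Lquasiperprop}(a) and (c) directly, whereas the paper works at the level of individual $Q_m$ representations via Lemma~\ref{formalizm}(c) and (d), writing the permutations $F_\pm$ and the resulting sets $H_\pm$ out explicitly; the two are interchangeable, since the algebra-level lemma is derived from the set-level one.
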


\begin{proof}
We will first prove the assertion (a). Suppose $a, b \in \mN$, and let $A$ be a quasiperidoc set $A := Q_{(a+b)n}(G,
\cL _{a,b})\in
\cQ _{a,b}$, where $G \subset
\mR^{(a+b)n}$ is a Jordan measurable set. Then application of Lemma~\ref{formalism2} yields
\begin{equation}\label{bred1}
    T^{-1}(A) =
    Q_{(a+b+1)n}
    \left(
        G_+,\,
            \begin{bmatrix}
            I_n \\
            \cL _{a,b}
            \end{bmatrix}
        L
    \right),
    \qquad
    T(A) =
    Q_{(a+b+1)n}
    \left(
        G_-,\,
            \begin{bmatrix}
            \cL _{a,b} \\
            I_n
            \end{bmatrix}
        L^{-1}
    \right),
\end{equation}
where the sets $G_-, G_+ \subset \mR^{(a+b+1)n}$ are given by
$$
    G_+
    :=
    \{0\} \x G +
        \begin{bmatrix}
        I_n \\
        \cL _{a,b}
        \end{bmatrix}
    R^{-1}(0),
    \qquad
    G_-
    :=
    G \x \{ 0\} -
        \begin{bmatrix}
        \cL _{a,b} \\
        I_n
        \end{bmatrix}
    L^{-1} R^{-1}(0)
$$
and inherit Jordan measurability from $G$ and $R^{-1}(0)$. The definition of the matrix $\cL_{a,b}$ in  (\ref{calLab}) implies that
\begin{equation}\label{bred2}
        \begin{bmatrix}
        I_n \\
        \cL _{a,b}
        \end{bmatrix}
    L =
    F_+
        \begin{bmatrix}
        I_n \\
        \cL _{a-1,b+1}
        \end{bmatrix},
    \qquad
        \begin{bmatrix}
        \cL _{a,b} \\
        I_n
        \end{bmatrix}
        L^{-1} =
    F_-
        \begin{bmatrix}
        I_n \\
        \cL _{a+1,b-1}
        \end{bmatrix},
\end{equation}
where $F_+, F_- \in \{0,1\}^{(a+b+1)n \x (a+b+1)n}$ are permutation matrices
$$
    F_+ :=
        \begin{bmatrix}
        0 & 0 & I_n & 0 \\
        0 & I_{(a-1)n} & 0 & 0 \\
        I_n & 0 & 0 & 0 \\
        0 & 0 & 0 & I_{bn}
        \end{bmatrix},
    \qquad
    F_- :=
        \begin{bmatrix}
        I_{an} & 0 & 0 & 0 \\
        0 & 0 & 0 & I_n \\
        0 & 0 & I_{(b-1)n} & 0 \\
        0 & I_n & 0 & 0
        \end{bmatrix}.
$$
Hence, by using the assertions (c) and (d) of Lemma~\ref{formalizm}, it now follows from (\ref{bred1}) and (\ref{bred2}) that
\begin{equation}\label{invTA}
    T^{-1}(A) =
    Q_{(a+b)n}
    (
        H_+ , \cL _{a-1,b+1}
    ),
    \qquad
    T(A) =
    Q_{(a+b)n}
    (
        H_- , \cL _{a+1, b-1}
    ),
\end{equation}
where the sets
\begin{align*}
    H_+
    & :=
    \left\{
        v \in \mR^{(a+b)n}:\
            \begin{bmatrix}
            u \\
            v
            \end{bmatrix}
        \in
        F_+ G_+\
        {\rm for\ some}\         u \in \mZ^n
    \right\},\\
    H_-
    & :=
    \left\{
        u \in \mR^{(a+b)n}:\
            \begin{bmatrix}
            u \\
            v
            \end{bmatrix}
        \in
        F_- G_-\
        {\rm for\ some}\
        v \in \mZ^n
    \right\}
\end{align*}
are Jordan measurable. Therefore, the representations (\ref{invTA}) imply that $ T^{-1}(A) \in \cQ _{a-1, b+1}$ and $
T(A) \in \cQ _{a+1, b-1}$, thus establishing  the inclusions (\ref{invTcalQab}) in view of the arbitrariness of the set $A
\in \cQ _{a, b}$. We will now prove the assertion (b) of the lemma. Suppose $N \in \mN$ and $A \in \cQ_N^+$, that is,
\begin{equation}\label{AQNn}
    A := Q_{Nn}(G,\cL _N^+)
 \end{equation}
for a Jordan measurable set $G \subset \mR^{Nn}$. Then application of Lemma~\ref{formalism2} leads to
\begin{equation}\label{invTA1}
    T^{-1}(A) =
    Q_{(N+1)n}
    \left(
        H,
            \begin{bmatrix}
            I_n \\
            \cL _N^+
            \end{bmatrix}
        L
    \right),
 \end{equation}
where the set $H \subset \mR^{(N+1)n}$ is given by
\begin{equation}\label{setH}
    H
    :=
    \{0\} \x G +
        \begin{bmatrix}
        I_n \\
        \cL _N^+
        \end{bmatrix}
    R^{-1}(0).
\end{equation}
Since $ \begin{bmatrix} I_n \\ \cL _N^+ \end{bmatrix}
L = \cL _{N+1}^+ $ in view of (\ref{calLN}), then it follows from (\ref{invTA1}) and the
Jordan measurability of the set $H$ in (\ref{setH}) that
\begin{equation}\label{star3}
    T^{-1}(A) =
    Q_{(N+1)n}
    (
        H, \cL _{N+1}^+
    )
    \in
    \cQ _{N+1}^+.
 \end{equation}
This  representation implies the first of the inclusions in
(\ref{invTcalQN}) due to the arbitrariness of $A \in \cQ _N^+$. The second of the inclusions (\ref{invTcalQN}) can be  established in a similar fashion
by using the relations
$$
    T
    (
        Q_{Nn}
        (
            G, \cL _N^-
        )
    ) =
    Q_{(N+1)n}
    (
        G \x \{ 0\} - \cL _{N+1}^-  R^{-1}(0), \,
        \cL _{N+1}^-
    )
    \in
    \cQ _{N+1}^-
 $$
 which hold for any $N \in \mN$ and any Jordan measurable set
 $G \subset \mR^{Nn}$, which completes the proof of the
 lemma. \end{proof}

\begin{theorem}
\label{algebraprop}
The associated, backward and forward algebras $\cQ$, $\cQ^-$ and $\cQ^+$ in  (\ref{assalgebra}) and (\ref{backforalgebras})
are transformed by the transition operator $T$ in (\ref{trans}) and
its set-valued inverse $T^{-1}$ as follows:
\begin{itemize}
\item[{\bf (a)}]
the associated algebra $\cQ $ is invariant under the maps $T$ and $T^{-1}$:
$$
    T^{-1}
    (
        \cQ
    )
    \subset \cQ,
    \qquad
    T
    (
        \cQ
    )
    \subset
    \cQ;
$$
\item[{\bf (b)}]
the forward algebra  $\cQ ^+$ is invariant under the map $T^{-1}$:
$$
    T^{-1}
    (
        \cQ ^+
    )
    \subset
    \cQ ^+;
$$
\item[{\bf (c)}]
the backward algebra $\cQ ^-$ is invariant under $T$:
$$
    T
   (
        \cQ ^-
    )
    \subset
    \cQ ^-;
$$
\item[{\bf (d)}]
for any $A \in \cQ $, there exists $N \in \mN$ such that $
T^{-k}(A) \in \cQ ^+$ and $ T^k (A) \in \cQ ^- $ for
all $ k \> N $.
\end{itemize}
\end{theorem}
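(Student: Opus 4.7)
The plan is to deduce all four assertions from Lemma~\ref{algebraprop2}, supplemented by the ``$I_n$-absorption'' identity of Lemma~\ref{Lquasiperprop}(c) for handling the boundary iteration, together with the representations $\cQ = \bigcup_{a,b\>1} \cQ_{a,b}$ and $\cQ^{\pm} = \bigcup_{N\>1} \cQ_N^{\pm}$ and the stated monotonicity of these families.

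Assertions (b) and (c) should follow immediately from Lemma~\ref{algebraprop2}(b): given $A \in \cQ^+$, pick $N$ with $A \in \cQ_N^+$; then $T^{-1}(A) \in \cQ_{N+1}^+ \subset \cQ^+$, and the argument for (c) is symmetric. For assertion (a), given $A \in \cQ_{a,b}$, I would first use monotonicity to embed $A \in \cQ_{a+1,b+1}$, so that a single application of Lemma~\ref{algebraprop2}(a) gives $T^{-1}(A) \in \cQ_{a, b+2} \subset \cQ$ and $T(A) \in \cQ_{a+2, b} \subset \cQ$; this monotonic shift avoids any ambiguity arising from a subscript dropping to zero.

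Assertion (d) is the one that requires real work. Given $A \in \cQ_{a,b}$, the idea is to iterate Lemma~\ref{algebraprop2}(a) exactly $a-1$ times, during which the first subscript remains $\geq 1$ so each step is directly licensed by the lemma, yielding $T^{-(a-1)}(A) \in \cQ_{1, b+a-1}$. The decisive next application of $T^{-1}$ I would handle by invoking Lemma~\ref{formalism2} explicitly: a $\cL_{1, b+a-1}$-quasiperiodic set is mapped to one quasiperiodic with respect to
\[
\begin{bmatrix} I_n \\ \cL_{1, b+a-1} \end{bmatrix} L
\;=\;
\begin{bmatrix} L \\ I_n \\ L^2 \\ \vdots \\ L^{b+a} \end{bmatrix},
\]
where the interior $I_n$ block appears because $L^{-1} \cdot L = I_n$. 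By Lemma~\ref{Lquasiperprop}(c) this $I_n$ block is redundant for the corresponding algebra of quasiperiodic sets, and, after a row permutation allowed by Lemma~\ref{Lquasiperprop}(a), the surviving matrix is precisely $\cL_{b+a}^+$. Hence $T^{-a}(A) \in \cQ_{b+a}^+$, and invoking assertion (b) then yields $T^{-k}(A) \in \cQ^+$ for every $k \geq a$. A symmetric argument, using the $T$-formula from Lemma~\ref{formalism2} together with the same $I_n$-absorption (now created by $L \cdot L^{-1}$), gives $T^k(A) \in \cQ^-$ for every $k \geq b$. The choice $N := \max(a,b)$ then discharges (d).

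The main obstacle I anticipate is precisely this boundary step. A naive iteration of Lemma~\ref{algebraprop2}(a) just keeps shifting the pair of subscripts along the diagonal $\cQ_{a-k, b+k}$ and never escapes into $\cQ^+$, while at the moment the first subscript would formally drop to zero the stated lemma no longer directly applies. Working through the transformation rule of Lemma~\ref{formalism2} by hand, so that the $L^{-1}$-block is annihilated against the freshly produced $L$-from-$I_n$ block via Lemma~\ref{Lquasiperprop}(c), is exactly what makes the escape into the forward (respectively, backward) algebra possible.
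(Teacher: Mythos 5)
Your proof is correct and follows essentially the same route as the paper, which simply asserts that the inclusions $T^{-(a+c)}(\cQ_{a,b}) \subset \cQ_{a+b+c}^+$ and $T^{(b+c)}(\cQ_{a,b}) \subset \cQ_{a+b+c}^-$ ``follow from Lemma~\ref{algebraprop2}.'' The boundary iteration you isolate is a real (if small) gap in that terse statement: Lemma~\ref{algebraprop2}(a) is stated only for $a,b\in\mN$, so the step exiting the two-sided family into $\cQ^+$ is not literally covered by its wording; the paper in effect relies on the convention $\cQ_{0,N}=\cQ_N^+$, and your explicit appeal to Lemma~\ref{formalism2} followed by absorbing the freshly produced $I_n$ block via Lemma~\ref{Lquasiperprop}(a),(c) is exactly the justification of that convention.
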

\begin{proof}
The assertions (a), (b) and (c) of the theorem follow from Lemma~\ref{algebraprop2}. The assertion (d) can be proved by using the inclusions
$$
    T^{-(a+c)}
    (
        \cQ _{a,b}
    )
    \subset
    \cQ _{a+b+c}^+
    \subset
    \cQ ^+,
    \qquad
    T^{(b+c)}
    (
        \cQ _{a,b}
    )
    \subset
    \cQ _{a+b+c}^-
    \subset
    \cQ ^-
$$
(which hold for all $a, b, c \in \mN$ and follow from Lemma~\ref{algebraprop2})  and the property that for any $A \in \cQ $ there exist $a, b \in \mN$ satisfying $A \in \cQ_{a,b}$. \end{proof}

By Theorem~\ref{algebraprop}(b), the transition operator $T$ is measurable with respect to the forward algebra $\cQ ^+$. The assertion (d) of the theorem can be interpreted as an absorbing property  of $\cQ ^+$ with respect to the map $T^{-1}$ and the absorbing property of  the backward algebra $\cQ ^-$ with respect to the transition operator $T$.

\subsection{Frequency preservation on the forward algebra}\label{PFFA}

We will need the following enhancement of the nonresonance property for the matrix $L$ of the quantized linear $(R,L)$-system.

%
%
%
%

\begin{definition}
\label{iternonres}
A nonsingular matrix $L \in \mR^{n\x n}$ is said to be iteratively nonresonant if the corresponding matrix $\cL \in \mR^{\infty \x n}$, associated with $L$ by (\ref{degL}), is nonresonant in the sense of Definition~\ref{infnonres}.
\end{definition}
Since the matrix $L$ is nonsingular, then
%
the iterative nonresonance property is equivalent to the rational independence of  the rows of the matrix
$$
        \begin{bmatrix}
        I_n \\
        \cL _N^+
        \end{bmatrix}
    =
        \begin{bmatrix}
        I_n \\
        L \\
        \vdots \\
        L^N
        \end{bmatrix}
    \in
    \mR^{n(N+1)\x n}
$$
for every $N \in \mN$, where $\cL_N^+$ is the matrix given by (\ref{calLN}). Therefore,
$L$ is iteratively nonresonant if and only if so is $L^{-1}$.
Also note that  iteratively nonresonant matrices $L \in \mR^{n \x n}$ do exist. Moreover, \emph{iteratively resonant} matrices $L$ (which
are not iteratively nonresonant) form  a set of zero
$n^2$-dimensional Lebesgue measure. Indeed, the set of such matrices $L
\in \mR^{n\x n}$ can be
represented as a countable union
$$
    \bigcup_{N \> 1}\
    \bigcup_{u \in \mZ^{Nn}\setminus\{0\},\ v \in \mZ^n}
    \Upsilon_{N,u,v}
$$
of the following sets
$$
    \Upsilon_{N,u,v}
    :=
    \left\{
        L \in \mR^{n \x n}:\
        \det L \ne 0\
        {\rm and}\
        \sum_{k=1}^{N}
        (
            L^{\rT}
        )^k
        u_k = v
    \right\},
$$
where $u:= \small{\begin{bmatrix}u_1\\ \vdots\\ u_N\end{bmatrix}}$ is an $Nn$-dimensional vector partitioned into
$n$-dimensional subvectors $u_1, \ldots, u_N$. Each of the sets $\Upsilon_{N,u,v}$ has zero $n^2$-dimensional Lebesgue measure which can be verified  as follows. By assuming, without loss of generality, that   $|u_N|=1$, the Hilbert space $\mR^{n\x n}$ (endowed with the Frobenius inner product of matrices \cite{Horn}) can be split into the orthogonal sum $\mR^{n\x n} = \Span(Z) \op Z^{\bot}$ of the one-dimensional subspace spanned by a nonzero idempotent matrix $Z:= u_Nu_N^{\rT}$ and the corresponding orthogonal complement $Z^{\bot}:= \{Y\in \mR^{n\x n}:\ \Tr (Y^{\rT} Z) = u_N^{\rT} Y u_N = 0\}$ which is an $(n^2-1)$-dimensional hyperplane in $\mR^{n\x n}$. Now, by considering an $\mR^n$-valued polynomial $h(X):= \sum_{k=1}^N X^k u_k - v$  for $X:= Y + \lambda Z$, with $Y \in Z^{\bot}$, it follows that $h(Y + \lambda Z)$ is a polynomial of degree $N$ with respect to $\lambda \in \mR$, with the leading coefficient $Z^N u_N = u_Nu_N^{\rT}u_N = u_N\ne 0$ in view of the idempotence of $Z$. Therefore, the following integral with respect to the $(n^2-1)$-dimensional Lebesgue measure over the hyperplane $Z^{\bot}$ vanishes,
\begin{align*}
    \mes_{n^2} \Upsilon_{N,u,v} & \< 
    \mes_{n^2} 
    \big\{
        X\in \mR^{n\x n}:\, 
        h(X) = 0
    \big\}\\
    & =
    \int_{Z^{\bot}} 
    \mes_1 
    \big\{
        \lambda\in \mR:\, 
        h(Y + \lambda Z) = 0
    \big\}\rd Y 
    = 
    0
\end{align*}
because the set, involved  in the integrand, is finite (consisting of at most $N$ values of $\lambda$) and, thus, has zero one-dimensional Lebesgue measure for every matrix $Y \in Z^{\bot}$.

\begin{theorem}\label{algebraprop1}
Suppose the matrix $L \in \mR^{n\x n}$ of the quantized linear $(R,L)$-system  is iteratively nonresonant. Then:

\begin{itemize}
\item[{\bf (a)}]
the associated algebra $\cQ$ in (\ref{assalgebra}) consists of frequency measurable subsets of the lattice $\mZ^n$;
\item[{\bf (b)}]
the backward and forward algebras $\cQ^-$ and $\cQ^+$ in (\ref{backforalgebras}) are independent in the sense that
$$ \bF \left( A \bigcap B \right) = \bF(A) \bF(B)$$ for
all $ A \in \cQ ^-$,\ $B \in \cQ ^+ $;
\item[{\bf (c)}]
the transition operator $T$ in (\ref{trans})  preserves the frequency $\bF$ on the forward algebra $\cQ^+$:
\begin{equation}\label{FinvTA}
    \bF
    (
        T^{-1}(A)
    ) =
    \bF(A)
    \quad
    {\rm for\ all}\
    A \in \cQ ^+.
\end{equation}
\end{itemize}
\end{theorem}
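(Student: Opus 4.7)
The plan is to derive parts (a) and (b) directly from Theorem~\ref{calLquasiperprop}, and to reduce part (c) to a measure-theoretic computation in $\mR^{(N+1)n}$ based on the explicit formula for $T^{-1}(A)$ supplied by Lemma~\ref{formalism2}.

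For (a), the iterative nonresonance of $L$ means precisely that the matrix $\cL$ in (\ref{degL}) is nonresonant in the sense of Definition~\ref{infnonres}, so Theorem~\ref{calLquasiperprop}(a) applied to $\cL$ gives frequency measurability of every set in $\cQ = \cQ_{\infty}(\cL)$. For (b), note that $\cL^-$ and $\cL^+$ in (\ref{deg-+L}) are (only) two non-overlapping submatrices of $\cL$, so Theorem~\ref{calLquasiperprop}(b) with $N=2$ yields $\bF(A\cap B) = \bF(A)\bF(B)$ for $A\in\cQ^-$ and $B\in\cQ^+$.

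For (c), I would fix $N\in\mN$ and a Jordan measurable $G\subset [0,1)^{Nn}$, and consider an arbitrary $A = Q_{Nn}(G, \cL_N^+)\in\cQ_N^+$. Applying Lemma~\ref{formalism2} (the first representation in (\ref{T-1QmGL})) and using the identity $\begin{bmatrix} I_n\\ \cL_N^+\end{bmatrix} L = \cL_{N+1}^+$ gives
\begin{equation*}
    T^{-1}(A) = Q_{(N+1)n}(H,\cL_{N+1}^+),
    \qquad
    H := \{0\}\x G + \begin{bmatrix} I_n\\ \cL_N^+\end{bmatrix} R^{-1}(0).
\end{equation*}
Since $L$ is iteratively nonresonant, $\cL_{N+1}^+$ is nonresonant, so Theorem~\ref{periodicset} computes $\bF(T^{-1}(A))$ as $\mes_{(N+1)n}(V\cap (H+\mZ^{(N+1)n}))$ for any Lebesgue measurable cell $V\subset\mR^{(N+1)n}$. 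The key choice is
\begin{equation*}
    V := R^{-1}(0)\x [0,1)^{Nn},
\end{equation*}
which is a cell by Lemma~\ref{cellprop1}(d) and whose first factor matches the Jordan measurable cell $R^{-1}(0)$ coming from the quantizer. Partitioning a generic $y\in V$ as $y=(y_0,y')$ with $y_0\in R^{-1}(0)$ and $y'\in [0,1)^{Nn}$, the cell property of $R^{-1}(0)$ forces the integer shift in the first block to vanish, so the membership $y\in H+\mZ^{(N+1)n}$ reduces to $y' \in \cL_N^+ y_0 + G + \mZ^{Nn}$.

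Fubini's theorem then gives $\bF(T^{-1}(A)) = \int_{R^{-1}(0)} \mes_{Nn}([0,1)^{Nn}\cap (\cL_N^+ y_0 + G + \mZ^{Nn})) \, \rd y_0$. For each $y_0$, the set $(G+\cL_N^+ y_0)+\mZ^{Nn}$ is translation invariant, so by Lemma~\ref{cellprop2}(c) the inner integrand equals $\mes_{Nn}([0,1)^{Nn}\cap (G+\mZ^{Nn})) = \bF(A)$ independently of $y_0$; combined with $\mes_n R^{-1}(0) = 1$ from Lemma~\ref{cellprop1}(c), this yields $\bF(T^{-1}(A))=\bF(A)$, and the general case $A\in\cQ^+$ follows by taking a sufficiently large $N$. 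The main obstacle is choosing the cell $V$ so that the first-block constraint from $H$ matches the first factor of $V$ exactly; any other choice would entangle the two blocks and obstruct the application of Fubini combined with translation invariance.
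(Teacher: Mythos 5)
Your proposal follows the paper's own argument nearly step for step: parts (a) and (b) are the same direct applications of Theorem~\ref{calLquasiperprop}, and part (c) uses exactly the paper's choice of the cell $V=R^{-1}(0)\times[0,1)^{Nn}$ together with the Fubini reduction to a constant integrand. The one small informality is the direct appeal to Lemma~\ref{cellprop2}(c) for the $y_0$-independence of the inner integrand: as stated, that lemma gives cell-independence rather than translation-independence, so one should first shift the cube by $-\cL_N^+ y_0$ using the translation invariance of $\mes_{Nn}$ (noting the shifted cube is still a cell by Lemma~\ref{cellprop1}(a)) before invoking the lemma --- a benign point which the paper's own proof handles more tersely by simply observing that $\mes_{Nn} W_u = \mes_{Nn} G$ whenever $G\subset[0,1)^{Nn}$.
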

\begin{proof}
The assertion (a) of the theorem follows from Theorem~\ref{calLquasiperprop}(a). The assertion (b) is a corollary from Theorem~\ref{calLquasiperprop}(b) since the matrices $\cL ^-$ and $\cL ^+$, defined by (\ref{deg-+L}), are nonoverlapping submatrices of the matrix $\cL $ given by (\ref{degL}). We will now prove the assertion (c). Suppose $A \in \cQ ^+$ is a fixed but otherwise arbitrary set from the forward algebra, and hence, $A$ is representable by (\ref{AQNn}) for $N \in \mN$ and a Jordan measurable set $G \subset [0,1)^{Nn}$. Then, as was obtained in the proof of Lemma~\ref{algebraprop2}(b), the set $T^{-1}(A)$ is given by (\ref{star3}), where $H \subset \mR^{(N+1)n}$ is a Jordan measurable set defined by (\ref{setH}).  Therefore, application of Theorem~\ref{periodicset} (under the assumption that $L$ is iteratively nonresonant) yields
\begin{align}
\label{FAmesG}
    \bF(A)
    & =
    \mes_{Nn} G,\\
\label{FinvTAmesVH}
    \bF
    (
        T^{-1}(A)
    )
    &=
    \mes_{(N+1)n}
    \left(
        V \bigcap
        (
            H + \mZ^{(N+1)n}
        )
    \right),
\end{align}
where $V$ is an arbitrary Lebesgue measurable cell in $\mR^{(N+1)n}$. It will be convenient to use the set
\begin{equation}\label{specialcell}
    V
    :=
    R^{-1}(0) \x [0,1)^{Nn}
\end{equation}
which is a cell in $\mR^{(N+1)n}$ in view of
Lemma~\ref{cellprop1}(d). From (\ref{setH}) and
(\ref{specialcell}), it follows that  the set on the right-hand side
of (\ref{FinvTAmesVH}) is representable as
$$
    K
    :=
    V \bigcap
    (
        H + \mZ^{(N+1)n}
    )
    =
    \left\{
            \begin{bmatrix}
            u \\
            v
            \end{bmatrix}:\
        u \in R^{-1}(0),\
        v \in W_u
    \right\},
$$
where
\begin{equation}
\label{Wu}
    W_u
    :=
    [0,1)^{Nn} \bigcap
    (
        \cL_N^+ u + G + \mZ^{Nn}
    ).
\end{equation}
Hence,
\begin{equation}\label{repeatedint}
    \mes_{(N+1)n} K =
    \int_{R^{-1}(0)}
    \mes_{Nn} W_u
    \rd u.
\end{equation}
Here, the integrand is identically constant since (\ref{Wu}) and the above assumption that $G \subset [0,1)^{Nn}$ imply that $ \mes_{Nn} W_u = \mes_{Nn} G $ for all $u \in \mR^n$. In combination with  the equality $\mes_n R^{-1}(0) = 1$, this reduces the integral in (\ref{repeatedint}) to $\mes_{(N+1)n} K = \mes_{Nn} G$, and hence, (\ref{FinvTAmesVH}) takes the form
\begin{equation}
\label{star8}
    \bF
    (
        T^{-1}(A)
    ) =
    \mes_{Nn} G .
\end{equation}
In view of arbitrariness of the set $A \in \cQ^+$,   comparison of (\ref{FAmesG}) with (\ref{star8}) establishes (\ref{FinvTA}), thus completing the proof of the theorem. \end{proof}

Theorems~\ref{algebraprop} and~\ref{algebraprop1} are illustrated by
Fig.~\ref{fig2}.
 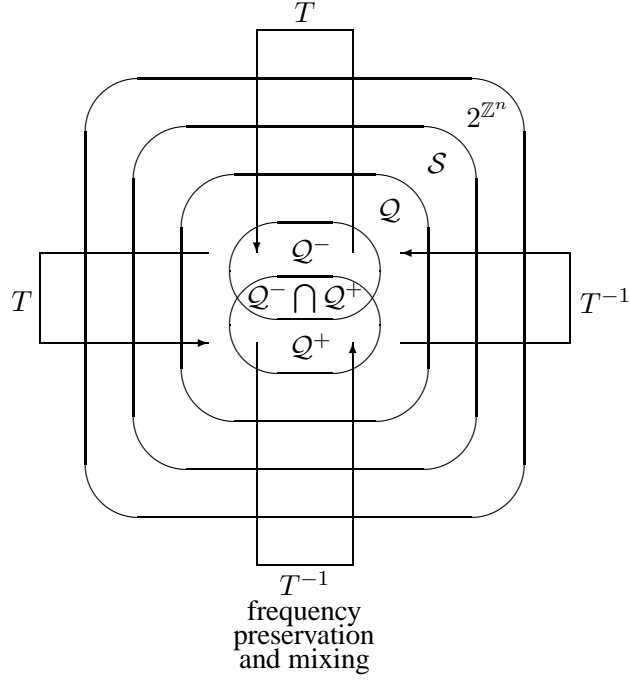
\begin{figure}[htbp]
\begin{center}
\unitlength=0.8mm
\linethickness{0.4pt}
\begin{picture}(120.00,110.00)
\put(60,63){\oval(25,16)} \put(60,72){\oval(25,16)}
\put(68,75){\line(0,1){37.00}} \put(52,112){\vector(0,-1){37.00}}
\put(68,112){\line(-1,0){16.00}}
\put(60,115){\makebox(0,0)[cc]{\small$T$}}
\put(68,23){\vector(0,1){37.00}} \put(52,60){\line(0,-1){37.00}}
\put(52,23){\line(1,0){16.00}}
\put(60,20){\makebox(0,0)[cc]{\small$T^{-1}$}}
\put(60,13){\makebox(0,4)[cc]{\small frequency}}
\put(60,9){\makebox(0,4)[cc]{\small preservation}}
\put(60,5){\makebox(0,4)[cc]{\small and mixing}}
\put(104,75){\vector(-1,0){28.00}} \put(76,60){\line(1,0){28.00}}
\put(104,60){\line(0,1){15.00}}
\put(110,67){\makebox(0,0)[cc]{\small$T^{-1}$}}
\put(16,60){\vector(1,0){28.00}} \put(44,75){\line(-1,0){28.00}}
\put(16,75){\line(0,-1){15.00}} \put(13,67){\makebox(0,0)[cc]{\small$T$}}
\put(60,67.5){\makebox(0,0)[cc]{\small$\cQ ^-\bigcap \cQ ^+$}}
\put(61,75.5){\makebox(0,0)[cc]{\small$\cQ ^-$}}
\put(61,59.5){\makebox(0,0)[cc]{\small$\cQ ^+$}}
\put(60,67.5){\oval(41,41)} \put(60,67.5){\oval(57,57)}
\put(60,67.5){\oval(73,73)} \put(74,82){\makebox(0,0)[cc]{\small$\cQ$}} \put(82,90){\makebox(0,0)[cc]{\small$\cS $}}
\put(90.5,98){\makebox(0,0)[cc]{\small$2^{\mZ^n}$}}
\end{picture}
\end{center}\vskip-1cm
\caption{ The  associated algebra $\cQ $ is invariant under the transition operator $T$ and its set-valued inverse $T^{-1}$. The backward algebra $\cQ ^-$  and the forward algebra $\cQ^+$ are invariant under $T$ and $T^{-1}$, respectively. Under the assumption that the matrix $L$ is iteratively nonresonant, the algebra  $\cQ $ is contained by the class $\cS$ of frequency measurable subsets of $\mZ^n$. The map $T$ is frequency preserving and mixing on $\cQ^+$. The algebra $\cQ ^- \bigcap \cQ ^+$ consists of trivial sets whose frequencies are either zero or one. } \label{fig2}
\end{figure}
In particular, in view of Theorem~\ref{algebraprop1}(b) (under the assumption that the matrix $L$ is iteratively nonresonant),  $ \bF(A) \in \{ 0, 1\} $ for all $ A \in \cQ ^- \bigcap \cQ ^+ $. Indeed, any set $A \in \cQ ^- \bigcap \cQ^+$ is self-independent in the sense that $\bF(A) = \bF\left(A \bigcap A\right) = (\bF(A))^2$ and hence, $\bF(A)$ equals either zero or one. This is  a version of Kolmogorov's zero-one law \cite{Shiryayev}.

Theorem~\ref{algebraprop}(b) and Theorem~\ref{algebraprop1}(c) show that, under the iterative nonresonance assumption on $L$, the quadruple $(\mZ^n, \cQ ^+, \bF, T)$ can be regarded as a  dynamical system with an invariant finitely additive probability measure $\bF$, which can be studied from the viewpoint of ergodic theory. In particular, Theorem~\ref{mixing} in the next section establishes a mixing property for this quadruple.

An ergodic theoretic result is provided by Theorem~\ref{prolongation} below. Note that the transition operator $T$ preserves the frequency $\bF$  on the forward algebra $\cQ ^+$, and this property does not necessarily hold for the associated algebra $\cQ $.  It turns out that  the restriction $\bF|_{\cQ^+}$ can be extended to a finitely additive probability measure on the whole associated algebra $\cQ$ in such a way that the extended measure is preserved under the transition operator.

\begin{theorem}\label{prolongation}
Suppose the matrix $L \in \mR^{n\x n}$ of the quantized linear $(R,L)$-system is iteratively nonresonant. Then:
\begin{itemize}
 \item[{\bf (a)}]
for any element $A \in \cQ $ of the associated algebra $\cQ$ in
(\ref{assalgebra}), there exists a limit
\begin{equation}\label{hatF}
    \wh{\bF}(A) =
    \lim_{k \to +\infty}
    \bF
    (
        T^{-k}(A)
    );
\end{equation}
\item[{\bf (b)}]
the functional $\wh{\bF}: \cQ  \to [0,1]$ is a finitely
additive probability measure on $(\mZ^n, \cQ )$ and
satisfies \begin{equation}\label{hatFprop1}
    \wh{\bF}
    (
        T^{-1}(A)
    ) =
    \wh{\bF}(A) \<
    \wh{\bF}
    (
        T(A)
    )
    \qquad
    {\rm for\ all}\
    A \in \cQ;
\end{equation}
 \item[{\bf (c)}]
 the measures $\wh{\bF}$ and $\bF$
 coincide on the forward algebra
 (\ref{backforalgebras}),
\begin{equation}\label{hatFprop2} \wh{\bF}(A) = \bF(A) \quad {\rm for\ all}\ A \in
\cQ ^+. \end{equation}
\end{itemize}
\end{theorem}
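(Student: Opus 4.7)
The plan is to exploit the absorbing property in Theorem~\ref{algebraprop}(d): every $A \in \cQ$ eventually lands in the forward algebra $\cQ^+$ under repeated application of $T^{-1}$, and on $\cQ^+$ we already have frequency preservation by Theorem~\ref{algebraprop1}(c). This should make the sequence $\bF(T^{-k}(A))$ eventually constant, which trivialises part~(a) and forces the equality with $\bF$ on $\cQ^+$ in part~(c).

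More precisely, for part~(a), given $A \in \cQ$, fix an integer $N$ such that $T^{-k}(A) \in \cQ^+$ for all $k \> N$ (Theorem~\ref{algebraprop}(d)). By Theorem~\ref{algebraprop}(b), $\cQ^+$ is invariant under $T^{-1}$, so $T^{-k}(A) \in \cQ^+$ for every $k \> N$, and Theorem~\ref{algebraprop1}(c) then yields $\bF(T^{-(k+1)}(A)) = \bF(T^{-1}(T^{-k}(A))) = \bF(T^{-k}(A))$. Hence $\bF(T^{-k}(A))$ is constant for $k \> N$ and the limit in (\ref{hatF}) exists. Part~(c) follows immediately: if $A \in \cQ^+$, we may take $N = 0$, and the constant value is $\bF(A)$.

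For part~(b), the properties of $\wh{\bF}$ are inherited from those of $\bF$ by passing to the limit. Non-negativity and the normalization $\wh{\bF}(\mZ^n) = 1$ are immediate (since $T^{-k}(\mZ^n) = \mZ^n$). For finite additivity on $\cQ$ (which is an algebra by (\ref{calQcalL}) and is stable under $T^{-1}$ by Theorem~\ref{algebraprop}(a)), I would note that $T^{-k}$ commutes with set-theoretic operations, so for disjoint $A, B \in \cQ$ and all sufficiently large $k$, the sets $T^{-k}(A)$ and $T^{-k}(B)$ lie in $\cQ^+$ and are disjoint, allowing finite additivity of $\bF$ on $\cQ^+$ (Lemma~\ref{aversetprop}(c)) to be transferred to $\wh{\bF}$. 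The invariance $\wh{\bF}(T^{-1}(A)) = \wh{\bF}(A)$ is then a shift of index in the defining limit (\ref{hatF}).

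The inequality $\wh{\bF}(A) \< \wh{\bF}(T(A))$ is the only slightly delicate point, because $T$ need not be injective: one cannot replace it by an equality. I would derive it from the elementary inclusion $A \subset T^{-1}(T(A))$, valid for any map. Combined with monotonicity of $\wh{\bF}$ (which follows from the finite additivity established above: if $C \subset D$ in $\cQ$, then $\wh{\bF}(D) = \wh{\bF}(C) + \wh{\bF}(D \setminus C) \> \wh{\bF}(C)$) and the already-proved invariance $\wh{\bF}(T^{-1}(T(A))) = \wh{\bF}(T(A))$, this yields the desired inequality. The main (minor) obstacle is just checking that all of $T(A)$, $T^{-1}(A)$, and $T^{-1}(T(A))$ do belong to $\cQ$ so that $\wh{\bF}$ is defined on them, which is ensured by Theorem~\ref{algebraprop}(a).
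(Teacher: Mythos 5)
Your proof is correct and follows essentially the same route as the paper: the absorbing property of $\cQ^+$ under $T^{-1}$ combined with frequency preservation on $\cQ^+$ makes the sequence $\bF(T^{-k}(A))$ eventually constant, finite additivity passes from $\bF\circ T^{-k}$ to the limit, and the inequality in (b) comes from the inclusion $A \subset T^{-1}(T(A))$ together with monotonicity. The extra care you take in noting that $T(A), T^{-1}(A), T^{-1}(T(A))\in\cQ$ and in spelling out why $T^{-k}$ preserves disjointness is sound but matches the paper's intent.
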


\begin{proof}  In order to prove the assertion (a) of the theorem, we  fix an arbitrary $A \in \cQ $. The absorbing property of the forward algebra $\cQ ^+$ with respect to $T^{-1}$ from Theorem~\ref{algebraprop}(d) implies that there exists $N \in \mN$ such that $T^{-k}(A) \in \cQ ^+$ for all $k \> N$. Hence, from the frequency preservation property of the transition operator $T$ on $\cQ^+$, established in Theorem~\ref{algebraprop1}(c), it follows that  $\bF(T^{-k}(A)) = \bF(T^{-N}(A))$ for all $k \> N$, which implies the convergence (\ref{hatF}). In order to prove the assertion (b), we note that the property that $\wh{\bF}$ is a finitely additive probability measure on $(\mZ^n, \cQ )$ follows from that of the functional $\bF \circ T^{-k}: \cQ  \to [0,1]$ for any $k\in \mN$.  The equality in (\ref{hatFprop1}) follows from the definition of $\wh{\bF}$. The inequality in (\ref{hatFprop1}) can be obtained by using the inclusion $A \subset T^{-1}(T(A))$ for any $A \subset \mZ^n$. Indeed, by combining this inclusion with the properties of $\wh{\bF}$ established above, it follows that
$$\wh{\bF}(A) \< \wh{\bF}(T^{-1}(T(A))) = \wh{\bF}(T(A))$$ for any $A \in \cQ $.  Finally, the assertion (c) is proved by noting that (\ref{hatFprop2}) follows from the measurability and frequency preservation properties
of the transition operator $T$ with respect to the forward algebra
$\cQ ^+$ (see Theorem~\ref{algebraprop}(b) and Theorem~\ref{algebraprop1}(c)), which completes the proof of the theorem.  \end{proof}

\subsection
{Independence and uniform distribution of quantization
errors}
\label{IUDQE}

We will now apply the frequency-based analysis on quasiperiodic subsets of the lattice $\mZ^n$  to the deviation of trajectories of the quantized linear $(R,L)$-system from those of the original linear system with a nonsingular matrix $L \in \mR^{n\x n}$. In one step of the system dynamics, such deviation is described by a map $E:\mZ^n
\to R^{-1}(0)$ defined by
 \begin{equation}
 \label{onesteperror}
    E(x) := L x - T(x)
\end{equation}
More generally, for any $k \in \mN$, the deviation of trajectories of the systems in $k$ steps of their evolution can be expressed as
\begin{equation}
\label{devN}
    L^N x - T^N(x)
    = \sum_{k=1}^{N}L^{N-k}E_k(x)
\end{equation}
in terms of maps $E_k:\mZ^n \to R^{-1}(0)$ defined by
\begin{equation}
    \label{kthsteperror}
    E_k := E \circ T^{k-1}.
\end{equation}

\begin{definition}\label{error}
The map $E_k:\mZ^n \to R^{-1}(0)$, defined by (\ref{onesteperror}) and (\ref{kthsteperror}), is called the $k$th quantization error.
\end{definition}

Also, for any $N \in \mN$, we define a map $\cE_N: \mZ^n \to ( R^{-1}(0) )^N$ which is formed from the first $N$ quantization errors as
\begin{equation}\label{calEN}
    \cE_N
    :=
        \begin{bmatrix}
        E_1 \\
        \vdots \\
        E_N
        \end{bmatrix}.
\end{equation}

\begin{lemma}\label{reperrors}
For any $N \in \mN$, there exists a  map  $g_N: \mR^{Nn} \to
(R^{-1}(0))^N$ such that
\begin{itemize}
\item[{\bf (a)}]
the map $\cE_N$ in (\ref{calEN}) is representable  as
\begin{equation}\label{calENgNLN}
    \cE _N =
    g_N \circ \cL _N^+,
\end{equation}
where the matrix $\cL _N^+ \in \mR^{Nn \x n}$ is given by
(\ref{calLN});
\item[{\bf (b)}]
the map $g_N$ is unit periodic with respect to its $Nn$ variables;
\item[{\bf (c)}]
$g_N$ is $\mes_{Nn}$-continuous;
\item[{\bf (d)}]
$g_N$ is a $\mes_{Nn}$-preserving bijection of the set
$(R^{-1}(0))^N$ onto itself.
\end{itemize}
\end{lemma}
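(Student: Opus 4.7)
The plan is to construct $g_N$ as a concrete skew product over the torus $\mR^n/\mZ^n$ and then verify each of (a)--(d) in turn. First, introduce the fold map $\phi:\mR^n\to R^{-1}(0)$ defined by $\phi(u):=u-R(u)$. By the commutation property (\ref{commutation}), $\phi$ is unit periodic; since the partition $\{R^{-1}(z):z\in\mZ^n\}$ consists of translates of the Jordan measurable cell $R^{-1}(0)$, the discontinuity set of $\phi$ lies in $\partial R^{-1}(0)+\mZ^n$ and has $\mes_n$-measure zero. Thus $\phi$ is the canonical reduction $\mR^n\to\mR^n/\mZ^n$ expressed through the cell $R^{-1}(0)$ of representatives, and its restriction to any translate of $R^{-1}(0)$ is a $\mes_n$-preserving bijection onto $R^{-1}(0)$.

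Next, define $g_N(u_1,\ldots,u_N):=(v_1,\ldots,v_N)$ recursively by
\begin{equation*}
    v_1 := \phi(u_1),
    \qquad
    v_k := \phi\Big(u_k - \sum_{j=1}^{k-1} L^{k-j} v_j\Big),
    \quad 2\< k\< N.
\end{equation*}
For (a), I would prove by induction on $k$ the identity $T^k(x)=L^k x-\sum_{j=1}^k L^{k-j}E_j(x)$ (which underlies (\ref{devN})), whence $E_{k+1}(x)=\phi(LT^k(x))=\phi\bigl(L^{k+1}x-\sum_{j\<k}L^{k+1-j}E_j(x)\bigr)$; comparing with the recursion and taking $u_j=L^j x$ yields $E_k(x)=v_k$ for every $k$, i.e.\ (\ref{calENgNLN}). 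For (b), translating $u_k$ by any $z_k\in\mZ^n$ leaves $v_k$ unchanged by the unit periodicity of $\phi$, and this propagates trivially through the recursion.

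For (c), the discontinuity set of $g_N$ is contained in a finite union (over $k=1,\ldots,N$) of sets of the form $\{(u_1,\ldots,u_N): u_k-\psi_k(u_1,\ldots,u_{k-1})\in\partial R^{-1}(0)+\mZ^n\}$, where $\psi_k$ is the $\mes_{(k-1)n}$-continuous function determined by the earlier steps of the recursion. A Fubini argument, integrating first in the $u_k$-slice and invoking the Jordan measurability of $R^{-1}(0)$, gives each such set zero $\mes_{Nn}$-measure.

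Finally, for (d), I would invert the recursion explicitly: given $(v_1,\ldots,v_N)\in(R^{-1}(0))^N$, set $u_1:=v_1$ and, for $k\>2$, $u_k:=\phi\bigl(v_k+\sum_{j<k}L^{k-j}v_j\bigr)\in R^{-1}(0)$; a direct substitution into the definition of $g_N$ confirms this is the two-sided inverse on $(R^{-1}(0))^N$. Measure preservation I would then deduce by a fibrewise Fubini argument: for any fixed $v_1,\ldots,v_{k-1}$, the map $u_k\mapsto v_k$ is a torus translation by $-\sum_{j<k}L^{k-j}v_j$ realised through the cell $R^{-1}(0)$, and such translations preserve $\mes_n$ on $R^{-1}(0)$ because the cell has unit Lebesgue measure (Lemma~\ref{cellprop1}(c)) and Lebesgue measure on $\mR^n/\mZ^n$ is translation invariant. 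Iterating this identity over $k=1,\ldots,N$ yields the $\mes_{Nn}$-preservation of $g_N$ on $(R^{-1}(0))^N$. The main obstacle I anticipate is keeping the skew-product bookkeeping clean in the measure-preservation step (so that the Fubini iteration is unambiguous across the dependent shifts); the construction of $g_N$ and the verifications of (a)--(c) are then essentially mechanical.
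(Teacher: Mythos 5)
Your construction is the paper's own argument, merely unwound: the recursion $v_k := \phi\bigl(u_k - \sum_{j<k} L^{k-j} v_j\bigr)$ with the fold map $\phi(u) := u - R(u)$ is exactly the skew product $g_{N+1}(u) = \bigl(g_N(u_1,\ldots,u_N),\, g(u_{N+1} - F_N g_N(u_1,\ldots,u_N))\bigr)$, $F_N = [L^N \ \cdots\ L]$, that the paper builds by induction, and the verifications of (a)--(d) rest on the same ingredients (the telescoping identity underlying (\ref{devN}), periodicity of $\phi$, Jordan measurability of $R^{-1}(0)$, and fibrewise translation invariance of $\mes_n$ on the cell). The proof is correct; your explicit inverse $u_k := \phi\bigl(v_k + \sum_{j<k}L^{k-j}v_j\bigr)$ and the Fubini treatment of the discontinuity set are slightly more explicit than the paper's inductive phrasing, but the content is identical.
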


\begin{proof}
We will carry out the proof by induction on $N$. By recalling (\ref{trans}), it follows  that the first quantization error $E_1 = E$ in
(\ref{onesteperror}) is representable as
\begin{equation}\label{EgL}
    \cE _1 = E = g \circ L,
 \end{equation}
where the map $g: \mR^n \to R^{-1}(0)$ is given by
\begin{equation}\label{auxg}
    g(u) = u - R(u).
\end{equation}
In view of the commutation property (\ref{commutation})  and Jordan
measurability of the set $R^{-1}(0)$, the map $g$ is unit periodic
with respect to its $n$ variables and is $\mes_n$-continuous. Furthermore, $g$ maps
the set $R^{-1}(0)$ identically  onto itself and hence, preserves
the $n$-dimensional Lebesgue measure on this set.  Therefore, the map $g$ in
(\ref{auxg}) indeed satisfies the conditions (a)--(d) of the lemma
for $N = 1$. Now, assume that the assertion of the lemma holds for some $N \in \mN$.
Consider the next map $\cE_{N+1}:\mZ^n \to (R^{-1}(0))^{N+1}$ in (\ref{calEN}):
\begin{equation}\label{nextcalE}
    \cE _{N+1}
    =
        \begin{bmatrix}
        \cE _N \\
        E_{N+1}
        \end{bmatrix}.
\end{equation}
From (\ref{devN}), it follows that
$$
    T^N(x) = L^N x - \sum_{k=1}^{N}
L^{N-k}E_k(x) ,
$$
which, in combination with (\ref{EgL}) and (\ref{auxg}), leads to
\begin{equation}\label{nexterror}
    E_{N+1}(x)
    :=
    E
    (
        T^N(x)
    ) =
    g
    (
        L^{N+1} x -
        F_N
        \cE _N(x)
    ),
\end{equation}
where the matrix $F_N\in \mR^{n \x Nn}$   is given by
$$
    F_N
    :=
    \begin{bmatrix}
        L^N & \ldots & L
        \end{bmatrix}.
$$
In view of
(\ref{calENgNLN}), the representation (\ref{nexterror}) implies that
\begin{equation}\label{nextE}
    E_{N+1} =
    h_N \circ \cL _{N+1}^+,
\end{equation}
where the map $h_N: \mR^{(N+1)n} \to R^{-1}(0)$ is defined by
\begin{equation}\label{auxhN}
    h_N(u)
    :=
    g
    \left(
        u_{N+1} -
        F_N
        g_N
        \left(
                \begin{bmatrix}
                u_1 \\
                \vdots \\
                u_N
                \end{bmatrix}
        \right)
    \right),
    \qquad
    u :=
        \begin{bmatrix}
        u_1 \\
        \vdots \\
        u_{N+1}
        \end{bmatrix},
\end{equation}
where the vector $u \in \mR^{(N+1)n}$ is partitioned into subvectors $u_1, \ldots, u_{N+1} \in \mR^n$. By substituting (\ref{nextE}) into (\ref{nextcalE}) and recalling (\ref{calENgNLN}), it follows that $ \cE _{N+1} = g_{N+1} \circ \cL _{N+1}^+ $, where the map $g_{N+1}: \mR^{(N+1)n} \to (R^{-1}(0))^{N+1}$ is given by
\begin{equation}\label{nextgN}
    g_{N+1}(u)
    :=
        \begin{bmatrix}
        g_N
        \left(
                \begin{bmatrix}
                u_1 \\
                \vdots \\
                u_N
                \end{bmatrix}
        \right) \\ \\
        h_N(u)
        \end{bmatrix}.
\end{equation}
Note that the map $g_{N+1}$  is the skew
product~\cite{Martin,Sinai} of the maps $g_N$ and $h_N$. From the properties of the map $g$ in (\ref{auxg}), it follows that the map $h_N$ in (\ref{auxhN}) is
unit periodic with respect to its $(N+1)n$ variables (which are the entries of the vectors $u_1, \ldots, u_{N+1}$) and is
$\mes_{(N+1)n}$-continuous. Moreover, for any fixed subvectors $u_1, \ldots, u_N
\in \mR^n$, the map
$$
    h_N
    \left(
            \begin{bmatrix}
            u_1 \\
            \vdots \\
            u_N \\
            \bullet
            \end{bmatrix}
    \right):
    \mR^n \to R^{-1}(0),
$$
which is obtained in (\ref{auxhN}) from $g$ by translating the argument,  is unit periodic in  its $n$ variables, $\mes_n$-continuous and bijectively maps the set $R^{-1}(0)$ onto itself, preserving the $n$-dimensional Lebesgue measure on this set. Therefore, the map $g_{N+1}$ in (\ref{nextgN})  is unit periodic in its $(N+1)n$ variables, $\mes_{(N+1)n}$-continuous and bijectively maps the set $(R^{-1}(0))^{N+1}$ onto itself. The property that $g_{N+1}$ preserves the $(N+1)n$-dimensional Lebesgue measure is established by a similar reasoning as for the skew products of measure preserving automorphisms. This completes the induction step and the proof of the lemma. \end{proof}

\begin{theorem}\label{distrerrors}
Suppose the matrix $L$ is iteratively nonresonant. Then the
quantization errors $E_k$ in (\ref{kthsteperror}) are mutually
independent, uniformly distributed on the set $R^{-1}(0)$ and
are measurable with respect to the forward algebra $\cQ ^+$. More precisely,
for any $N \in \mN$, the map $\cE_N$ in (\ref{calEN}) is
$\mes_{Nn}$-distributed over the set $(R^{-1}(0))^N$, and the algebra $\cS_{\cE_N}$, generated  by this map, coincides with the algebra $\cQ_N^+$ given by (\ref{calQN}).
\end{theorem}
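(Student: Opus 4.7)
The plan is to derive the theorem as a direct application of Theorem~\ref{periodicmap} to the representation $\cE_N = g_N \circ \cL_N^+$ supplied by Lemma~\ref{reperrors}(a), with the three structural properties of $g_N$ from parts (b)--(d) of that lemma fitting exactly the hypotheses and conclusions needed. First, I would note that iterative nonresonance of $L$ (Definition~\ref{iternonres}), together with Definition~\ref{infnonres}, forces the submatrix $\cL_N^+$ of the matrix $\cL$ in (\ref{degL}) to be nonresonant in the sense of Definition~\ref{nonres}. Lemma~\ref{reperrors}(b,c) supply the remaining hypotheses of Theorem~\ref{periodicmap}, yielding that $\cE_N$ is $D$-distributed for some measure $D$ on $(R^{-1}(0))^N$ and that $\cS_{\cE_N} \subset \cQ_{Nn}(\cL_N^+) = \cQ_N^+$.

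To identify $D$, I would apply formula (\ref{mesD}) with the cell $V := (R^{-1}(0))^N \subset \mR^{Nn}$ provided by Lemma~\ref{cellprop1}(d). Lemma~\ref{reperrors}(d) then gives, for any Borel $B \subset V$,
$$
D(B) = \mes_{Nn}\left(V \cap g_N^{-1}(B)\right) = \mes_{Nn}(B),
$$
since $g_N$ is a $\mes_{Nn}$-preserving bijection of $V$ onto itself. Thus $D$ coincides with the normalized Lebesgue measure on $V$. Since $\mes_n R^{-1}(0) = 1$ by Lemma~\ref{cellprop1}(c), this measure factors as a product of $N$ copies of the uniform distribution on $R^{-1}(0)$, which simultaneously delivers the mutual independence of $E_1, \ldots, E_N$ and their uniform distribution on $R^{-1}(0)$.

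The nontrivial part is the reverse inclusion $\cQ_N^+ \subset \cS_{\cE_N}$. Given $A = Q_{Nn}(G, \cL_N^+)$ with $G \subset [0,1)^{Nn}$ Jordan measurable, I would construct an explicit $D$-continuous Borel set $B \subset V$ with $\cE_N^{-1}(B) = A$. By unit periodicity of $g_N$, one has $\cE_N(x) = g_N(\cL_N^+ x) = \phi(\mod_{Nn}(\cL_N^+ x))$, where $\phi := g_N|_{[0,1)^{Nn}}$, so the natural candidate is $B := \phi(G)$. To see that $\phi$ is a bijection from $[0,1)^{Nn}$ onto $V$ and that $\phi(G)$ is Jordan measurable, I would argue inductively on $N$ using the skew product representation (\ref{nextgN}): the building block $g(u) = u - R(u)$ from (\ref{auxg}) acts as a translation by an integer vector on each piece $R^{-1}(z) \cap [0,1)^n$, so $g|_{[0,1)^n}: [0,1)^n \to R^{-1}(0)$ is a piecewise-isometric bijection, and this character is preserved by the skew product construction at each step. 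Consequently $B = \phi(G)$ is Jordan measurable, hence Borel with $\mes_{Nn}(\partial B) = 0$, i.e., $D$-continuous, and the chain $\cE_N(x) \in B \Longleftrightarrow \mod_{Nn}(\cL_N^+ x) \in G \Longleftrightarrow \cL_N^+ x \in G + \mZ^{Nn} \Longleftrightarrow x \in A$ identifies $A = \cE_N^{-1}(B) \in \cS_{\cE_N}$.

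The step I expect to be the main obstacle is the piecewise-isometry analysis of $\phi$: unfolding (\ref{nextgN}) inductively requires careful tracking of how the discontinuity surfaces of $g$ partition $[0,1)^{Nn}$ so that Jordan measurability of $G$ transfers to $\phi(G)$ while simultaneously preserving the bijection onto $V$. Everything else --- the applicability of Theorem~\ref{periodicmap}, the choice of the cell $V$, and reading off mutual independence and uniform distribution from the product structure of $D$ --- is essentially bookkeeping once the structural Lemma~\ref{reperrors} is in hand.
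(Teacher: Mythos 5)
Your overall strategy coincides with the paper's: both proofs hinge on the representation $\cE_N = g_N \circ \cL_N^+$ from Lemma~\ref{reperrors} together with nonresonance of $\cL_N^+$, and both read off uniformity and independence from the product structure of $\mes_{Nn}$ on $(R^{-1}(0))^N$. The paper applies Theorem~\ref{periodicset} directly to the set $\cE_N^{-1}(B) = Q_{Nn}(G,\cL_N^+)$ with $G := (R^{-1}(0))^N \cap g_N^{-1}(B)$, whereas you route through its packaged corollary Theorem~\ref{periodicmap} and formula (\ref{mesD}); since Theorem~\ref{periodicmap} is built on the very same apparatus, this is the same argument in a slightly different wrapper.

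There is, however, a concrete error in your treatment of the reverse inclusion. You claim that $g|_{[0,1)^n}$ is piecewise-isometric and that ``this character is preserved by the skew product construction at each step.'' It is not. Already for $N=2$, on the region where $u_1 \in [0,1)^n \cap R^{-1}(z_1)$ and $u_2 - L(u_1 - z_1) \in R^{-1}(z_2)$, the construction in (\ref{nextgN}) gives
$$
g_2(u_1,u_2) = \bigl(u_1 - z_1,\ u_2 - L u_1 + L z_1 - z_2\bigr),
$$
whose linear part is the unipotent block matrix $\begin{bmatrix} I_n & 0 \\ -L & I_n \end{bmatrix}$. This has determinant $1$ but is orthogonal only when $L = 0$. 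The skew product injects a shear through the term $-F_N\, g_N(\cdot)$, so $\phi := g_N|_{[0,1)^{Nn}}$ is piecewise affine with unimodular, generically non-orthogonal, linear parts — not piecewise isometric. Carried out as stated, your induction would break at the first step.

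The repair is minor. What the argument actually needs is that $\phi$ is a bijection that is piecewise affine with nonsingular linear parts on finitely many Jordan measurable pieces. Finiteness of the pieces follows from boundedness of the Jordan measurable cell $R^{-1}(0)$ (so only finitely many integer shifts occur at each level), and each piece is a finite intersection of $R^{-1}(0)$-translates and their nonsingular affine preimages, hence Jordan measurable. Since nonsingular affine maps preserve Jordan measurability and a finite union of Jordan measurable images is Jordan measurable, $\phi(G)$ is Jordan measurable; its $D$-continuity and the chain $\cE_N^{-1}(\phi(G)) = Q_{Nn}(G,\cL_N^+)$ then give $\cQ_N^+ \subset \cS_{\cE_N}$ as you intended. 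It is worth noting that the paper itself invokes only bijectivity of $g_N$ at this point and leaves the Jordan-measurability transfer implicit; the step you flagged as the main obstacle is indeed where both proofs owe the reader a little more detail, but the correct justification is the one above, not piecewise isometry.
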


\begin{proof}
For a fixed but otherwise arbitrary $N \in \mN$, let $g_N: \mR^{Nn} \to (R^{-1}(0))^N$ be the map satisfying the conditions (a)--(d) of Lemma~\ref{reperrors} and constructed in the proof of the lemma. Then (\ref{calENgNLN}) implies that
\begin{equation}\label{invcalENB}
    \cE _N^{-1}(B) =
    \left\{
        x \in \mZ^n:\
        \cL_N^+
        x \in g_N^{-1}(B)
    \right\}
\end{equation}
for any set $B \subset (R^{-1}(0))^N$. For what follows, $B$ is assumed to be Jordan measurable.   Now, since $R^{-1}(0)$ is a Jordan measurable cell in $\mR^n$, then, in view of Lemma~\ref{cellprop1}(d), the set  $(R^{-1}(0))^N$ is a Jordan measurable cell in $\mR^{Nn}$. In combination with the condition (b) of Lemma~\ref{reperrors}, this implies that $g_N^{-1}(B)$  is a translation invariant subset of $\mR^{Nn}$ which, in view of Lemma~\ref{cellprop2}(b), is representable as
\begin{equation}
\label{invgNB}
    g_N^{-1}(B) = G + \mZ^{Nn},
\end{equation}
with
\begin{equation}\label{auxsetG}
    G :=
    (
        R^{-1}(0)
    )^N
    \bigcap
    g_N^{-1}(B).
\end{equation}
Due to the condition (c) of Lemma~\ref{reperrors}, the set $G$
is Jordan measurable. Therefore, from (\ref{invcalENB}) and
(\ref{invgNB}), it follows that
\begin{equation}\label{invcalENBfinal}
    \cE _N^{-1}(B) =
    Q_{Nn}
    (
        G, \cL_N^+
    ),
\end{equation}
and hence, this set is $\cL_N^+$-quasiperiodic. By
Theorem~\ref{periodicset} under the iterative nonresonance assumption on $L$, the set $\cE _N^{-1}(B)$
is frequency measurable, with
\begin{equation}\label{FinvcalENB}
    \bF
    (
        \cE _N^{-1}(B)
    ) =
    \mes_{Nn} G.
\end{equation}
By recalling (\ref{auxsetG}) and the $\mes_{Nn}$-preserving
property of the map $g_N$  (see the condition (d) of
Lemma~\ref{reperrors}), it follows that $\mes_{Nn} G = \mes_{Nn} B$ and hence, (\ref{FinvcalENB}) takes the form
$$
    \bF
    (
        \cE _N^{-1}(B)
    ) =
    \mes_{Nn} B.
$$
Since the last equality holds for an arbitrary Jordan measurable set $B \subset (R^{-1}(0))^N$, then the map $\cE _N$ is indeed $\mes_{Nn}$-distributed (that is, uniformly distributed) over the set $(R^{-1}(0))^N$). Finally, by using the property that $g_N$ bijectively maps the set $(R^{-1}(0))^N$ onto itself (see the condition (d) of Lemma~\ref{reperrors}), it follows that for any Jordan measurable set $G \subset (R^{-1}(0))^N$, there exists a Jordan measurable set $B \subset (R^{-1}(0))^N$ which represents $G$ by (\ref{auxsetG}). In combination with (\ref{invcalENBfinal}), this  implies that the inclusion $\cS_{\cE_N} \subset \cQ_N^+$ is, in fact, an equality $\cS _{\cE _N} = \cQ _N^+$, which completes the proof of the theorem. \end{proof}

\begin{theorem}\label{distrerrors1}
Suppose the matrix $L\in \mR^{n\x n}$ of the quantized linear $(R,L)$-system is iteratively nonresonant. Then the algebras $\cS_{E_k}$, generated by the quantization errors $\cE_k$ in (\ref{kthsteperror}), are representable as
\begin{equation}\label{calSEk}
    \cS _{E_k} =
    T^{-(k-1)}
    (
        \cQ _1^+
    )
    \subset
    \cQ _k^{+}
    \quad
    {\rm for\ all}\
    k \in \mN,
\end{equation}
and are mutually independent in the sense that for
any $N \in \mN$ and any sets $A_k \in \cS_{E_k}$,\ $1 \< k \< N$,
\begin{equation}\label{FAkFak}
    \bF
    \left(
        \bigcap_{k=1}^{N} A_k
    \right) =
    \prod_{k=1}^{N}
    \bF
    (
        A_k
    ).
\end{equation}
\end{theorem}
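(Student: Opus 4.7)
The strategy is to bootstrap from Theorem~\ref{distrerrors}, which already delivers that the joint map $\cE_N$ of the first $N$ quantization errors is uniformly distributed on the cell $(R^{-1}(0))^N$ and generates precisely the algebra $\cQ_N^+$. Both assertions of the theorem will follow as bookkeeping consequences.

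For the representation (\ref{calSEk}), I would first observe that (\ref{kthsteperror}) yields $E_k^{-1}(B) = T^{-(k-1)}(E_1^{-1}(B))$ for any Borel $B \subset R^{-1}(0)$, so $\cS_{E_k} = T^{-(k-1)}(\cS_{E_1})$. Applying Theorem~\ref{distrerrors} with $N = 1$ identifies $\cS_{E_1} = \cS_{\cE_1} = \cQ_1^+$, giving the equality in (\ref{calSEk}). The inclusion $T^{-(k-1)}(\cQ_1^+) \subset \cQ_k^+$ is then obtained by iterating the inclusion $T^{-1}(\cQ_N^+)\subset \cQ_{N+1}^+$ from Lemma~\ref{algebraprop2}(b) exactly $k-1$ times, starting from $N = 1$.

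For the independence identity (\ref{FAkFak}), each $A_k \in \cS_{E_k}$ is, by the first part, representable as $A_k = E_k^{-1}(B_k)$ for some Jordan measurable $B_k \subset R^{-1}(0)$. Intersecting and using $\cE_N = (E_1,\ldots,E_N)^{\rT}$ from (\ref{calEN}) collapses the joint event into a single preimage under $\cE_N$:
$$
    \bigcap_{k=1}^N A_k
    =
    \cE_N^{-1}\Big(\prod_{k=1}^N B_k\Big).
$$
The product $\prod_{k=1}^N B_k$ is a Jordan measurable subset of the cell $(R^{-1}(0))^N$, so Theorem~\ref{distrerrors} yields $\bF(\bigcap_k A_k) = \mes_{Nn}(\prod_k B_k) = \prod_k \mes_n B_k$. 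Each marginal frequency is computed by the same device, inserting $R^{-1}(0)$ in the $N-1$ remaining slots and using $\mes_n R^{-1}(0) = 1$ (the cell $R^{-1}(0)$ has unit measure by Lemma~\ref{cellprop1}(c)); this gives $\bF(A_k) = \mes_n B_k$, and the two sides of (\ref{FAkFak}) agree.

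I do not anticipate a substantive obstacle, since Theorem~\ref{distrerrors} has already done the analytical work. The only delicate bookkeeping point is verifying that $\prod_{k=1}^N B_k$ lies in the class of sets to which Theorem~\ref{distrerrors} applies; this is routine, as $\partial\!\prod_k B_k \subset \bigcup_k\bigl(\prod_{j\ne k} \overline{B_j}\bigr)\x \partial B_k$ has $(Nn)$-dimensional Lebesgue measure zero by Fubini, so the product is Jordan measurable as required.
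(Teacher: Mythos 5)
Your proposal is correct and follows essentially the same route as the paper: the equality in (\ref{calSEk}) from $\cS_{E_1}=\cQ_1^+$ (Theorem~\ref{distrerrors} with $N=1$) together with $E_k=E\circ T^{k-1}$, the inclusion by iterating Lemma~\ref{algebraprop2}(b), and the independence by writing $\bigcap_k A_k=\cE_N^{-1}(\prod_k B_k)$ and invoking the $\mes_{Nn}$-distribution of $\cE_N$. The only cosmetic difference is that the paper computes the marginal $\bF(A_k)=\mes_n G_k$ via $\cE_k^{-1}\bigl((R^{-1}(0))^{k-1}\x G_k\bigr)$ rather than padding out to $\cE_N$, which is an equivalent bookkeeping choice.
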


\begin{proof}
The equality in (\ref{calSEk}) is obtained by using the definition (\ref{kthsteperror}) and the relation $\cS_{E} = \cQ_1^+$ which was established in the proof of Theorem~\ref{distrerrors}. The inclusion in (\ref{calSEk}) follows from Lemma~\ref{algebraprop2}(b). We will now prove the mutual independence of the algebras generated by the quantization errors. To this end, suppose $N\in \mN$, and let  $A_k \in \cS_{E_k}$ be arbitrary elements of the corresponding algebras, representable  as
\begin{equation}
\label{Ak}
    A_k =
    E_k^{-1}
    (
        G_k
    ) =
    \cE _k^{-1}
    (
        (
            R^{-1}(0)
        )^{k-1}
        \x
        G_k
    ),
    \qquad
    k = 1, \ldots, N,
\end{equation}
where $G_k$ are Jordan measurable subsets of $R^{-1}(0)$, and use is made of the maps $\cE_k$ defined by (\ref{calEN}). Then application of Theorem~\ref{distrerrors} leads to
\begin{equation}\label{FAk}
    \bF
    (
        A_k
    ) =
    \mes_{kn}
    (
        (
            R^{-1}(0)
        )^{k-1}
        \x
        G_k
    ) =
    \mes_n G_k,
\end{equation}
where the rightmost equality also follows from the frequency preservation property of the transition operator $T$ on the forward algebra $\cQ ^+$. On the other hand, the intersection of the sets $A_k$ in (\ref{Ak}) is representable as
$$
    \bigcap_{k=1}^N
    A_k =
    \cE _N^{-1}
    (
        G_1 \x \ldots \x G_N
    )
$$
and hence,   by Theorem~\ref{distrerrors}, its frequency takes the form
\begin{equation}\label{FintersectAk}
    \bF
    \left(
        \bigcap_{k=1}^N A_k
    \right) =
    \mes_{Nn}
    (
        G_1 \x \ldots \x G_N
    ) =
    \prod_{k=1}^{N}
    \mes_n G_k.
\end{equation}
A comparison of (\ref{FintersectAk}) with (\ref{FAk}) leads to
(\ref{FAkFak}), which completes the proof of the theorem. \end{proof}

In addition to extending the results of \cite{V_1967} on the asymptotic distribution of roundoff errors, Theorems~\ref{distrerrors} and~\ref{distrerrors1} clarify the role of the forward algebra $\cQ ^+$.   More precisely, under the assumption that $L$ is iteratively nonresonant, $\cQ^+$ is the minimal algebra containing all the mutually independent algebras $\cS_{E_k}$ of frequency measurable quasiperiodic subsets of the lattice, generated by the quantization errors for $k\in \mN$. Equivalently, the forward algebra describes events which pertain to  the deviation of positive semitrajectories of the quantized linear system from those of the original system over finite time intervals (which will be considered in Section~\ref{DDPSQLSS}).

The following theorem is a corollary from Theorem~\ref{distrerrors}
and develops an ergodic theoretic point of view for the quantized
linear $(R,L)$-system, for which, as mentioned before, the restriction of the frequency
$\bF$ to the forward algebra $\cQ ^+$ is an invariant finitely
additive probability measure.

\begin{theorem}\label{mixing}
Suppose the matrix $L\in \mR^{n\x n}$ is iteratively nonresonant. Then the quadruple
$(\mZ^n, \cQ ^+, \bF, T)$ satisfies the mixing
property
\begin{equation}\label{limFinvTkAB}
    \lim_{k \to +\infty}
    \bF
    \left(
        T^{-k}(A) \bigcap B
    \right) =
    \bF(A) \bF(B),
    \qquad
    A, B \in \cQ ^+.
\end{equation}
\end{theorem}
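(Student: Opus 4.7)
My plan is to reduce the claim to an \emph{eventual exact factorization} — namely, that for $A \in \cQ_N^+$ and $B \in \cQ_M^+$, the equality $\bF(T^{-k}(A) \cap B) = \bF(A)\bF(B)$ holds as soon as $k \geq M$ — so that mixing will follow trivially in the limit. The whole argument is driven by the independence and uniform distribution of the quantization errors established in Theorems~\ref{distrerrors} and~\ref{distrerrors1}.

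First, since $\cQ^+ = \bigcup_{N \geq 1} \cQ_N^+$, I may fix $N, M \in \mN$ and assume $A \in \cQ_N^+$, $B \in \cQ_M^+$. By Theorem~\ref{distrerrors}, the algebras $\cS_{\cE_N} = \cQ_N^+$ and $\cS_{\cE_M} = \cQ_M^+$ are generated by the maps $\cE_N$ and $\cE_M$, so I may write $A = \cE_N^{-1}(G)$ and $B = \cE_M^{-1}(H)$ for some Jordan measurable sets $G \subset (R^{-1}(0))^N$ and $H \subset (R^{-1}(0))^M$.

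The key observation is that $E_j \circ T^k = E \circ T^{j-1} \circ T^k = E_{j+k}$ for every $j,k \in \mN$, so composing the map $\cE_N$ in (\ref{calEN}) with $T^k$ yields the shifted error tuple
$$
    \cE_N \circ T^k = \begin{bmatrix} E_{k+1} \\ \vdots \\ E_{k+N} \end{bmatrix}.
$$
Consequently, for any $k \geq M$,
$$
    T^{-k}(A) \cap B
    =
    \cE_{k+N}^{-1}\bigl(H \x (R^{-1}(0))^{k-M} \x G\bigr),
$$
because the condition $x \in B$ constrains $(E_1(x),\ldots,E_M(x)) \in H$, whereas the condition $x \in T^{-k}(A)$ constrains $(E_{k+1}(x),\ldots,E_{k+N}(x)) \in G$, and for $k \geq M$ these two blocks of coordinates do not overlap, leaving the intermediate coordinates $E_{M+1},\ldots,E_k$ unconstrained.

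Now I apply Theorem~\ref{distrerrors} to the map $\cE_{k+N}$: the preimage of the Jordan measurable product set $H \x (R^{-1}(0))^{k-M} \x G$ is frequency measurable, and its frequency equals the $(k+N)n$-dimensional Lebesgue measure of this product, which factorizes as
$$
    \mes_{Mn}(H)\cdot \mes_{(k-M)n}\!\bigl((R^{-1}(0))^{k-M}\bigr)\cdot \mes_{Nn}(G)
    = \mes_{Mn}(H)\cdot 1 \cdot \mes_{Nn}(G) = \bF(B)\bF(A),
$$
where I used that $\mes_n R^{-1}(0) = 1$ by Lemma~\ref{cellprop1}(c) since $R^{-1}(0)$ is a Jordan measurable cell. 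This gives exact factorization for all $k \geq M$, and (\ref{limFinvTkAB}) follows at once.

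The only real subtlety — and what I expect to be the main point to verify carefully — is the identification $T^{-k}(A) \cap B = \cE_{k+N}^{-1}(H \x (R^{-1}(0))^{k-M} \x G)$, i.e.\ that once $k \geq M$ the two sets of constraints act on disjoint coordinates of $\cE_{k+N}$. This is essentially a bookkeeping step tracking the shift $\cE_N \circ T^k$, but it is where the iterative nonresonance assumption on $L$ is leveraged (via Theorem~\ref{distrerrors}), and it explains the mixing phenomenon as a direct consequence of the stronger ``eventually exact independence'' of quantization errors.
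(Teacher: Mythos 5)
Your proof is correct and follows essentially the same strategy as the paper's: both establish eventual exact factorization for $k$ large by observing that $T^{-k}(A)$ depends only on $E_{k+1},\ldots,E_{k+N}$ while $B$ depends only on $E_1,\ldots,E_M$, so that for $k\geq M$ these are disjoint blocks of mutually independent, uniformly distributed quantization errors (Theorem~\ref{distrerrors}). The paper packages the conclusion via independence of the algebras $\cS_{\cE_N}$ and $T^{-k}(\cQ_N^+)$ with a single $N$ serving both sets, whereas you compute the frequency directly as a Lebesgue measure of a product set through $\cE_{k+N}$; this is a cosmetic difference, not a different route.
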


\begin{proof}
Let $A, B \in \cQ ^+$ be fixed but otherwise arbitrary elements of the forward algebra. Then there exists $N \in \mN$ such that $A, B \in \cQ_N^+$, where the algebra $\cQ _N^+$
is defined by (\ref{calQN}). Hence, in view of Theorem~\ref{distrerrors}, the sets $A$ and $B$ are representable as
\begin{equation}
\label{AGBH}
    A = \cE _N^{-1}(G),
    \qquad
    B = \cE _N^{-1}(H)
\end{equation}
for some Jordan
measurable sets $G, H \subset (R^{-1}(0))^N$. From the definitions (\ref{kthsteperror}) and (\ref{calEN}), it follows that
$$
    \cE _N \circ T^k =
        \begin{bmatrix}
        E_{k+1} \\
        \vdots \\
        E_{k+N}
        \end{bmatrix}
$$
for any $k\> 0$. In combination with the first of the equalities in (\ref{AGBH}), this implies that the set
$$
    T^{-k}(A)
    =
    (\cE_N \circ T^k)^{-1}(G)
    =
    \left\{
        x \in \mZ^n:\
            \begin{bmatrix}
            E_{k+1}(x) \\
            \vdots \\
            E_{k+N}(x)
            \end{bmatrix}
        \in G
    \right\}
$$
belongs to the algebra $\bigvee_{i=1}^N \cS _{E_{k+i}}$ generated by the
quantization errors $E_{k+1}, \ldots, E_{k+N}$. Since for
every $k \> N$,  the maps $\cE _N$ and\
$
        \begin{bmatrix}
        E_{k+1} \\
        \vdots \\
        E_{k+N}
        \end{bmatrix}
$
are formed from nonoverlapping sets of quantization
errors, they induce mutually independent algebras. Hence, by recalling the second equality from (\ref{AGBH}), it follows that
$$
    \bF
    \left(
        T^{-k}(A) \bigcap B
    \right) =
    \bF
    (
        T^{-k}(A)
    )
    \bF
    (
        B
    ) =
    \bF(A) \bF(B).
$$
Since the last two equalities hold for any $k \> N$, they  imply the convergence
(\ref{limFinvTkAB}), thus completing the proof. \end{proof}

Note that the proof of Theorem~\ref{mixing} establishes a stronger result: under the assumption that $L$ is iteratively nonresonant, the algebras $\cQ _N^+$ and $T^{-k}(\cQ_N^+)$ are mutually independent for all $N \in \mN$ and $k \> N$,  and hence, by the version of Kolmogorov's zero-one law of Section~\ref{PFFA}, the algebra $\cQ _N^+ \bigcap T^{-k}(\cQ _N^+)$  consists of trivial subsets of the lattice whose frequencies are either zero or one.

\subsection{Distribution of the deviation of positive semitrajectories of
the quantized linear and supporting systems}
\label{DDPSQLSS}

For what follows, we define an affine map $T_*: \mR^n \to \mR^n$ by
\begin{equation}\label{Tstar}
    T_*(x)
    :=
    L x - \mu,
 \end{equation}
where
 \begin{equation}\label{vectormu}
    \mu
    :=
    \int_{R^{-1}(0)}
    u \rd u
\end{equation}
is the mean vector of the uniform probability distribution over the cell $R^{-1}(0)$.

\begin{definition}\label{suplinsys}
For the quantized linear $(R,L)$-system, the dynamical system in $\mR^n$ with the transition operator $T_*$ given by (\ref{Tstar}) is referred to as the supporting system.
\end{definition}

For every $N \in \mN$, we introduce a \emph{deviation map} ${\delta}_N: \mZ^n \to \mR^n$, which describes the deviation between the $N$th iterates of the quantized and supporting systems as
\begin{equation}\label{deviation1}
    \delta_N(x)
    :=
    T_*^N(x) - T^N(x)
    =
    \sum_{k=1}^{N} L^{N-k}
    (
        E_k(x) - \mu
    ).
\end{equation}
Also, we define another map $\xi_N: \mZ^n \to \mR^n$ by
\begin{equation}
\label{deviation}
    \xi_N(x)
    :=
    x -
    T_*^{-N}
    (
        T^N(x)
    ) =
    \sum_{k=1}^{N} L^{-k}
    (
        E_k(x) - \mu
    ).
\end{equation}
The maps $\delta_N$ and $\xi_N$ are related to each other by a
linear transformation
\begin{equation}\label{linbij}
    \delta_N
    =
    L^N \xi_N
\end{equation}
and satisfy recurrence equations
\begin{equation}
\label{recursion}
    \delta_N = L \delta_{N-1} + E_N - \mu,
    \qquad
    \xi_N = \xi_{N-1} + L^{-N}(E_N - \mu)
\end{equation}
for all $N \in \mN$, with initial conditions $\delta_0 = \xi_0 = 0$.

For every $N \in \mN$, the map $\xi_N$ in (\ref{deviation}) is an affine function of the first $N$ quantization errors. Hence, in view of Lemma~\ref{mapcomposition} and Theorem~\ref{distrerrors} for any iteratively  nonresonant matrix $L$, a map $\Xi_N: \mZ^n \to \mR^{Nn}$ defined by
$$
    \Xi_N
    :=
        \begin{bmatrix}
        \xi_1 \\
        \vdots \\
        \xi_N
        \end{bmatrix}
    =
        \begin{bmatrix}
        L^{-1} & & 0 \\
        \vdots & \ddots & \\
         L^{-1} & \cdots & L^{-N}
         \end{bmatrix}
        \begin{bmatrix}
        E_1 -\mu \\
        \vdots \\
        E_N - \mu
        \end{bmatrix},
$$
is uniformly distributed over the set
 $$
        \begin{bmatrix}
        L^{-1} & & 0 \\
        \vdots & \ddots & \\
         L^{-1} & \cdots & L^{-N}
        \end{bmatrix}
    (
        R^{-1}(0) - \mu
    )^N
 $$
with constant probability density function (PDF) $ | \det L |^{\frac{N(N+1)}{2}} $ with respect to $\mes_{Nn}$. Moreover, the corresponding algebra $\cS_{\Xi_N}$ coincides with the algebra $\cQ _N^+$ defined by (\ref{calQN}).

Therefore,  the maps $\delta_N$ and $\xi_N$ can be regarded as $\mR^n$-valued   random vectors on the probability space $(\mZ^n, \cQ ^+, \bF)$. From (\ref{recursion}) and the mutual independence and uniform distribution of the quantization errors, it follows that the sequence $(\delta_N)_{N \> 1}$  has the structure of a homogeneous Markov chain,  whilst $(\xi_N)_{N \> 1}$  is a non-homogeneous Markov chain with independent increments (whose average values are zero since $\bA(E_k) = \mu$ in view of (\ref{vectormu})). We will use these properties in Section~\ref{FCLTDPSQLSS} in order to establish a functional central limit theorem for the sequence $(\xi_N)_{N \> 1}$ for a class of quantized linear systems.

\subsection{Distribution of the deviation of negative semitrajectories of
the quantized linear and supporting systems}
\label{DDNSQLSS}

For any $k \in \mN$, the preimage
$$
    T^{-k}(x)
    :=
    \left\{
        y \in \mZ^n:\
        T^k(y) = x
    \right\}
$$
of a point $x \in \mZ^n$
is a finite (possibly empty) subset of the lattice $\mZ^n$. The set-valued sequence $(T^{-k}(x))_{k \> 1}$ describes a \emph{negative semitrajectory} of the quantized linear $(R,L)$-system, and its fragment
\begin{equation}\label{Nthbasin}
    T^{-1}(x),
    \ldots, T^{-N}(x)
\end{equation}
is a \emph{basin of attraction} of depth $N$ for the point $x$. The aim of this subsection is to study the deviation of negative semitrajectories of the quantized linear $(R,L)$-system from those of the supporting system introduced in the previous section. To this end, we define  yet another  dynamical system on $\mZ^n$ with the transition operator
$\wt{T}:  \mZ^n \to \mZ^n$ given by
\begin{equation}
\label{TT}
    \wt{T}(x)
    :=
    R
    (
        \mu + T_*^{-1}(x)
    ),
\end{equation}
where $\mu$ is the mean vector from (\ref{vectormu}), and $T_*$ is the transition operator of the supporting system in (\ref{Tstar}) with the inverse
$$
    T_*^{-1}(x) = L^{-1}(x+\mu).
$$
Therefore, the map $\wt{T}$ in (\ref{TT}) is representable as
\begin{equation}
\label{comtrans}
    \wt{T} = \wt{R}  \circ L^{-1},
\end{equation}
where $\wt{R}: \mR^{n} \to \mZ^n$ is a quantizer given
by \begin{equation}
\label{comquant}
    \wt{R}(u) :=
    R
    (
        u +
        (
            I_n + L^{-1}
        )
        \mu
    ).
\end{equation}

\begin{definition}\label{comsys}
For the quantized linear $(R,L)$-system, the quantized linear $(\wt{R}, L^{-1})$-system with the quantizer  $\wt{R}$ in (\ref{comquant}), is called the  compensating system.
\end{definition}

Note that the map $(R,L)\mapsto (\wt{R},L^{-1})$ is an involution in the sense that the compensating system for the quantized linear $(\wt{R},L^{-1})$-system  coincides with  the quantized linear $(R,L)$-system.

We will now define quantization errors $\wt{E}_k : \mZ^n \to \wt{R}^{-1}(0)$ for the compensating system in a similar fashion to the quantization errors $E_k$ for the quantized linear $(R,L)$-system in Section~\ref{IUDQE}:
\begin{equation}\label{comerrors}
    \wt{E}_k
    :=
    \wt{E}
    \circ
    \wt{T}^{k-1},
    \qquad
    k = 1, 2, 3, \ldots,
\end{equation}
where
\begin{equation}\label{firstcomerror}
    \wt{E}(x)
    :=
    L^{-1} x - \wt{T}(x).
\end{equation}
The quantization errors $\wt{E}_k$ of the compensating system take values in the cell $\wt{R}^{-1}(0)$ which, in view of (\ref{comquant}), is given by \begin{equation}\label{cominvR}
    \wt{R}^{-1}(0) =
    R^{-1}(0) -
    (
        I_n + L^{-1}
    )
    \mu.
\end{equation}

\begin{theorem}\label{distrcomerrors}
Suppose the matrix $L\in \mR^{n\x n}$ is iteratively nonresonant. Then:
\begin{itemize}
\item[{\bf (a)}]
the quantization errors $\wt{E}_k$ of the
compensating system in (\ref{comerrors}) are mutually independent, uniformly
distributed on the cell $\wt{R}^{-1}(0)$ in (\ref{cominvR}) and are measurable with
respect to the backward algebra $\cQ ^-$ given by
(\ref{backforalgebras}). More precisely, for any $N \in \mN$,
the map $\wt{\cE }_N: \mZ^n \to     (
        \wt{R}^{-1}(0)
    )^N
$, defined by
\begin{equation}\label{calcomE}
    \wt{\cE }_N
    :=
        \begin{bmatrix}
        \wt{E}_1 \\
        \vdots \\
        \wt{E}_N
        \end{bmatrix},
\end{equation}
is  $\mes_{Nn}$-distributed, and the algebra $\cS_{\wt{\cE }_N}$, generated by this map, coincides
with the algebra  $\cQ _N^-$ defined in (\ref{calQN});
\item[{\bf (b)}]
the algebras, generated by the quantization errors of the
compensating system, are representable as
$
    \cS _{\wt{E}_k} =
    \wt{T}^{-(k-1)}
    (
        \cQ _1^-
    )
    \subset
    \cQ _k^{-}
$
for all
$
    k \in \mN
$,
and are mutually independent.
\end{itemize}
\end{theorem}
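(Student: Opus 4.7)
The clean approach is to observe that the compensating system is itself a quantized linear system, namely the $(\wt{R}, L^{-1})$-system with transition operator $\wt{T} = \wt{R}\circ L^{-1}$ in (\ref{comtrans}), and then simply invoke Theorems~\ref{distrerrors} and~\ref{distrerrors1} applied to this system. For this invocation to be legitimate, I first need to verify the iterative nonresonance hypothesis for the matrix $L^{-1}$; but this is exactly the remark immediately following Definition~\ref{iternonres}, namely that $L$ is iteratively nonresonant if and only if $L^{-1}$ is. Moreover, $\wt{R}$ is a genuine quantizer (its zero-preimage cell $\wt{R}^{-1}(0)$ in (\ref{cominvR}) is a translate of the Jordan measurable cell $R^{-1}(0)$), so $(\wt{R}, L^{-1})$ indeed fits the framework of Definition~\ref{system}.

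The second step is to identify the forward algebra of the $(\wt{R}, L^{-1})$-system with the backward algebra $\cQ^-$ of the original $(R,L)$-system. By (\ref{deg-+L}) and (\ref{calLN}), the forward algebra for the compensating system is $\cQ_\infty(\wt\cL^+)$, where
\[
\wt\cL^+ = \begin{bmatrix} L^{-1} \\ L^{-2} \\ \vdots \end{bmatrix},
\qquad
\wt\cL_N^+ = \begin{bmatrix} L^{-1} \\ \vdots \\ L^{-N}\end{bmatrix},
\]
and these matrices differ from $\cL^-$ and $\cL_N^-$ only by reordering of the block-rows. Since permuting block-rows is the action of a block-permutation unimodular matrix, Lemma~\ref{Lquasiperprop}(a) gives $\cQ_\infty(\wt\cL^+) = \cQ_\infty(\cL^-) = \cQ^-$ and $\cQ_{Nn}(\wt\cL_N^+) = \cQ_{Nn}(\cL_N^-) = \cQ_N^-$. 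Similarly the associated algebra of the compensating system coincides with that of the original system, and its backward algebra is $\cQ^+$ (the involutive duality noted after Definition~\ref{comsys}).

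With these identifications in place, assertion (a) follows at once from Theorem~\ref{distrerrors}: applied to the $(\wt{R}, L^{-1})$-system, that theorem states that the quantization errors $\wt E_k$ in (\ref{comerrors})--(\ref{firstcomerror}) are mutually independent, uniformly distributed on $\wt R^{-1}(0)$, and that $\wt\cE_N$ in (\ref{calcomE}) is $\mes_{Nn}$-distributed with $\cS_{\wt\cE_N}$ equal to the forward algebra of the compensating system at level $N$, which by the identification above is $\cQ_N^-$. Likewise, assertion (b) is the translation of Theorem~\ref{distrerrors1} under $(R,L)\leadsto(\wt R, L^{-1})$: $\cS_{\wt E_k} = \wt T^{-(k-1)}(\cS_{\wt E_1}) = \wt T^{-(k-1)}(\cQ_1^-) \subset \cQ_k^-$, and mutual independence of the algebras $\cS_{\wt E_k}$ is inherited from the corresponding statement for the compensating system.

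The only potential obstacle is the algebra-identification step just described; once that is carried out carefully (in particular, invoking Lemma~\ref{Lquasiperprop}(a) with the appropriate block-permutation unimodular matrix of size $Nn$ to match the ordering of rows in $\cL_N^-$ and $\wt\cL_N^+$), everything else is a routine transcription of the proofs of Theorems~\ref{distrerrors} and~\ref{distrerrors1}, and no further calculation is required.
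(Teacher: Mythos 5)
Your proof is correct and follows essentially the same route as the paper's own (two-sentence) proof: note that $L^{-1}$ is iteratively nonresonant whenever $L$ is, and then apply Theorems~\ref{distrerrors} and~\ref{distrerrors1} to the compensating $(\wt{R},L^{-1})$-system. You additionally spell out the algebra-identification step that the paper leaves implicit, namely that the forward algebra of the compensating system coincides with $\cQ^-$ (and $\cQ_N^-$ at each finite level) via Lemma~\ref{Lquasiperprop}(a) applied to the block-row permutation relating $\wt{\cL}_N^+$ to $\cL_N^-$, which is a worthwhile clarification but not a different method.
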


\begin{proof}  It was remarked in Section~\ref{PFFA} that if the matrix $L$ is iteratively nonresonant then so is its inverse $L^{-1}$. Therefore, the assertions of the theorem can be established by applying Theorems~\ref{distrerrors} and~\ref{distrerrors1} to the compensating system. \end{proof}

Now, for any $N \in \mN$, we define a set-valued map  $\Sigma_N : \mZ^n \to
2^{\mZ^n}$ by using the transition operator $\wt{T}$ of the compensating system as
\begin{equation}\label{setSigmaN}
    \Sigma_N(x)
    :=
    T^{-N}(x) -
    \wt{T}^{N}(x).
\end{equation}
This allows  the preimage $T^{-N}(x)$ of a point $x\in \mZ^n$ to be represented as the translation of the set $\Sigma_N(x) \subset \mZ^n$ by the vector $\wt{T}^N(x) \in \mZ^n$:
\begin{equation}\label{invTNx}
    T^{-N}(x) =
    \wt{T}^N(x) + \Sigma_N(x).
\end{equation}
In particular, for $N=0$, the definition (\ref{setSigmaN}) implies that the set $\Sigma_0(x)$ is a singleton consisting of the origin:
\begin{equation}\label{setSigma0}
    \Sigma_0(x) = \{0\}.
\end{equation}
In view of this initial condition, it follows by induction that the sets $\Sigma_N(x)$ in (\ref{setSigmaN}) satisfy the recurrence equation
\begin{equation}\label{recSigma}
    \Sigma_N(x) =
    \mZ^n
    \bigcap
    (
        \wt{E}_N(x) + L^{-1}
        (
            \Sigma_{N-1}(x) + R^{-1}(0)
        )
    ),
    \qquad
    N \in \mN.
\end{equation}
Indeed, by using (\ref{setSigmaN}), (\ref{invTNx}) and recalling the first quantization error $\wt{E}$ from (\ref{firstcomerror}), it follows that
\begin{align*}
    \Sigma_N(x) & =
    \left(
        \mZ^n
        \bigcap
        (
            L^{-1}
            (
                T^{-(N-1)}(x) + R^{-1}(0)
            )
        )
    \right) -
    \wt{T}^N(x) \\
    & =
    \left(
        \mZ^n
        \bigcap
        \left(
            L^{-1}
            (
                \wt{T}^{N-1}(x) +
                \Sigma_{N-1}(x) + R^{-1}(0)
            )
        \right)
    \right) -
    \wt{T}^N(x) \\
    & =
    \left(
        \mZ^n
        \bigcap
        \left(
            L^{-1}
            \wt{T}^{N-1}(x)
            -
            \wt{T}^N(x)
            +
            L^{-1}
            (
                \Sigma_{N-1}(x) + R^{-1}(0)
            )
        \right)
    \right) \\
    & =
    \left(
        \mZ^n
        \bigcap
        \left(
            \wt{E}
            (
                \wt{T}^{N-1}(x)
            ) +
            L^{-1}
            (
                \Sigma_{N-1}(x) + R^{-1}(0)
            )
        \right)
    \right) \\
    & =
    \mZ^n
    \bigcap
    \left(
        \wt{E}_N(x) + L^{-1}
        (
            \Sigma_{N-1}(x) + R^{-1}(0)
        )
    \right)
\end{align*}
for any $N \in \mN$.
For what follows, we add the empty subset of the lattice to the class $\Pi$ in (\ref{totPi}) and denote by
\begin{equation}\label{spaceXi}
    \wt{\Pi}
    :=
    \left\{
        A \subset \mZ^n:\
        \# A < +\infty
    \right\} =
    \Pi
    \bigcup
    \{\emptyset\}
\end{equation}
the class of finite subsets of  $\mZ^n$. Also, we define a function $\Delta: \wt{\Pi}^2 \to [0,1]$ which maps a pair of such sets $A$ and $B$ to \begin{equation}\label{DelBA}
    \Delta(B \mid A) =
    \mes_n
    \left\{
        u \in \wt{R}^{-1}(0):\
        \mZ^n
        \bigcap
        \left(
            u  + L^{-1}
            (
                A + R^{-1}(0)
            )
        \right) = B
    \right\}.
\end{equation}
Note that
$ \sum_{B \in \wt{\Pi}} \Delta(B \mid A) = 1 $ for any $A \in \wt{\Pi}$, and hence, the function $\Delta(\cdot \mid A): \wt{\Pi} \to [0,1]$  describes a conditional probability mass function on the class $\wt{\Pi}$ (which is a denumerable set).

\begin{theorem}\label{markovbasin}
Suppose the matrix $L$ is iteratively nonresonant. Then the sequence of
the set-valued maps $\Sigma_N : \mZ^n \to \wt{\Pi}$ defined
by (\ref{setSigmaN}) is a homogeneous Markov chain with respect to
the filtration $\{\cQ _N^-\}_{N \> 1}$ formed from the algebras in
(\ref{calQN}) with the transition kernel $\Delta(\cdot \mid \cdot)$ in (\ref{DelBA}).
More precisely, for any $N \in \mN$ and any finite subsets $B_1, \ldots, B_N$ of the lattice $\mZ^n$ (that is, elements of the class $\wt{\Pi}$ in (\ref{spaceXi})),
$$
    \bigcap_{k=1}^{N}
    \Sigma_k^{-1}
    (
        B_k
    ) =
    \big\{
        x \in \mZ^n:\
        \Sigma_k(x) = B_k
        \ \
        {\rm for\ all}\
        k = 1, \ldots, N
    \big\} \in
    \cQ _N^-
$$
and
$$
    \bF
    \left(
        \bigcap_{k=1}^{N}
        \Sigma_k^{-1}
        (
            B_k
        )
    \right) =
    \prod_{k=1}^{N}
    \Delta
    (
        B_k \mid  B_{k-1}
    )
$$
with $B_0 := \{0\}$.
\end{theorem}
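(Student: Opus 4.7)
The plan is to reduce the theorem to a statement about the joint quantization error map $\wt{\cE}_N$ of the compensating system by exploiting the recursion~(\ref{recSigma}), and then to invoke Theorem~\ref{distrcomerrors}. First I would establish, by induction on $N$ using the initial condition (\ref{setSigma0}) and the recursion (\ref{recSigma}), that there exists a deterministic map $\Phi_N: (\wt{R}^{-1}(0))^N \to \wt{\Pi}^N$ such that $(\Sigma_1(x), \ldots, \Sigma_N(x)) = \Phi_N(\wt{\cE}_N(x))$ for all $x \in \mZ^n$, since (\ref{recSigma}) expresses $\Sigma_N(x)$ as a deterministic function of $\wt{E}_N(x)$ and $\Sigma_{N-1}(x)$ alone.

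Next, I would fix $B_1, \ldots, B_N \in \wt{\Pi}$, set $B_0 := \{0\}$, and define
\[
    G_k := \bigl\{u \in \wt{R}^{-1}(0):\ \mZ^n \bigcap \bigl(u + L^{-1}(B_{k-1} + R^{-1}(0))\bigr) = B_k\bigr\}
\]
for $k = 1, \ldots, N$, so that $\mes_n G_k = \Delta(B_k \mid B_{k-1})$ by (\ref{DelBA}). Using the representation from the first step together with the recursion (\ref{recSigma}), the joint event becomes
\[
    \bigcap_{k=1}^N \Sigma_k^{-1}(B_k) = \wt{\cE}_N^{-1}(G_1 \x \cdots \x G_N).
\]
The decoupling into a Cartesian product works because $G_k$ depends only on $B_{k-1}$ and $B_k$, so the conditional requirement $\Sigma_k = B_k$ given $\Sigma_{k-1} = B_{k-1}$ amounts to $\wt{E}_k \in G_k$ alone, independently of the earlier errors.

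I would then verify that each $G_k$ is Jordan measurable in $\mR^n$. Since $\wt{R}^{-1}(0)$ is a bounded Jordan measurable cell and $L^{-1}R^{-1}(0)$ is Jordan measurable, only finitely many $y \in \mZ^n$ produce a nonempty set $U_y := \wt{R}^{-1}(0) \bigcap \bigl(y - L^{-1}(B_{k-1} + R^{-1}(0))\bigr)$, and each such $U_y$ is Jordan measurable as an intersection of a bounded Jordan measurable set with a finite union of translations of the Jordan measurable set $-L^{-1}R^{-1}(0)$. Writing $G_k$ as $\bigl(\bigcap_{y \in B_k} U_y\bigr) \setminus \bigl(\bigcup_{y \in \mZ^n \setminus B_k} U_y\bigr)$ exhibits it as a finite Boolean combination of Jordan measurable sets, from which its Jordan measurability follows.

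Finally, under the iterative nonresonance assumption on $L$, Theorem~\ref{distrcomerrors}(a) asserts that $\wt{\cE}_N$ is $\mes_{Nn}$-distributed with $\cS_{\wt{\cE}_N} = \cQ_N^-$; applied to the Jordan measurable set $G_1 \x \cdots \x G_N \subset (\wt{R}^{-1}(0))^N$, this yields $\bigcap_{k=1}^N \Sigma_k^{-1}(B_k) \in \cQ_N^-$ with frequency $\prod_{k=1}^N \mes_n G_k = \prod_{k=1}^N \Delta(B_k \mid B_{k-1})$, as required. The main obstacle is the Jordan measurability step: one has to verify that only finitely many $y \in \mZ^n$ contribute, and that the boundaries of the sets involved have $\mes_n$-measure zero, both of which rest on the Jordan measurability and boundedness of $R^{-1}(0)$ and $\wt{R}^{-1}(0)$.
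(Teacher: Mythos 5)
Your proof is correct and follows essentially the same route as the paper's: both arguments hinge on expressing the $\Sigma_k$'s as deterministic, $\mes$-continuous functions of the compensating-system error map $\wt{\cE}_N$ via the recursion~(\ref{recSigma}), and then reducing to Theorem~\ref{distrcomerrors}. The paper packages this step through the auxiliary maps $\rho_N$ of (\ref{maprhoN}) and a citation to Lemma~\ref{mapcomposition}, leaving the joint-preimage structure implicit; you instead compute the preimage explicitly, identifying $\bigcap_{k=1}^N \Sigma_k^{-1}(B_k) = \wt{\cE}_N^{-1}(G_1 \x \cdots \x G_N)$ and checking Jordan measurability of each $G_k$ by hand. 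This is a helpful elaboration that makes the product structure of the frequency visible directly, but it is not a different method.
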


\begin{proof}
From (\ref{setSigma0}) and (\ref{recSigma}), it follows by induction that for any $N \in \mN$, the map $\Sigma_N$ in (\ref{setSigmaN}) is representable as
\begin{equation}\label{rhoNEN}
    \Sigma_N
    =
    \rho_N \circ \wt{\cE }_N.
\end{equation}
Here, $\wt{\cE }_N$ is the map defined by (\ref{calcomE}) and $\rho_N: (\wt{R}^{-1}(0))^N \to \wt{\Pi}$ is a $\mes_{Nn}$-continuous map satisfying the recurrence equation
\begin{equation}\label{maprhoN}
    \rho_N(u) =
        \mZ^n
        \bigcap
        \left(
            u_N + L^{-1}
            \left(
                \rho_{N-1}
                \left(
                        \begin{bmatrix}
                        u_1 \\
                        \vdots \\
                        u_{N-1}
                        \end{bmatrix}
                \right) +
                R^{-1}(0)
            \right)
        \right),
\end{equation}
where the vector
$
    u :=
        {\small\begin{bmatrix}
        u_1 \\
        \vdots \\
        u_N
        \end{bmatrix}}
    \in
    (
        \wt{R}^{-1}(0)
    )^N
$ is partitioned into $n$-dimensional subvectors $u_1, \ldots, u_N \in R^{-1}(0)$, and the initial condition $\rho_0 = \{0\}$ is used. The assertions of the theorem can now be obtained from the representation (\ref{rhoNEN}) and the recurrence relation (\ref{maprhoN}) by using Lemma~\ref{mapcomposition} and Theorem~\ref{distrcomerrors}. \end{proof}

The relation (\ref{invTNx}) implies that
\begin{equation}\label{invdev}
    T^{-N}(x) - T_*^{-N}(x) =
    \Sigma_N(x)-\wt{\delta}_N(x),
\end{equation}
where the maps $\wt{\delta}_N:
    \mZ^n \to \mR^n$ are defined by
$$
    \wt{\delta}_N
    :=
    T_*^{-N} - \wt{T}^N =
    \sum_{k=1}^{N}
    L^{k-N}
    (
        \wt{E}_k + L^{-1} \mu
    )
 $$
 and
satisfy the recurrence equation
$$
    \wt{\delta}_N =
    L^{-1}
    \wt{\delta}_{N-1} +
    \wt{E}_N +
    L^{-1} \mu,
    \qquad
    N = 1, 2, 3, \ldots,
 $$
with the initial condition $\wt{\delta}_0 = 0$. This equation is similar to the first of the recurrence relations
(\ref{recursion}).

Therefore, the importance of Theorems~\ref{distrcomerrors}
and~\ref{markovbasin} consists in representing the deviation
(\ref{invdev}) as the Minkowski sum  of two sequences driven
by mutually independent and uniformly distributed
quantization errors of the compensating system: the
$n$-dimensional sequence $\wt{\delta}_N(x)$ and the
set-valued sequence $\Sigma_N(x)$ which  are homogeneous
Markov chains with respect to the filtration $\{\cQ_N^-\}_{N \> 1}$ formed from the algebras (\ref{calQN}).

This  clarifies the role of the backward algebra $\cQ^-$ defined by (\ref{backforalgebras}): while the forward
algebra $\cQ  ^+$ is formed from events which pertain to the
deviation of positive semitrajectories of the quantized
linear $(R,L)$-system and the supporting system (see
Section~\ref{DDPSQLSS}), the backward algebra $\cQ ^-$
describes events related to the deviation of negative
semitrajectories of these systems.

Now, for any $k \in \mN$, we define a function $\nu_k : \mZ^n \to \mZ_+$ by
\begin{equation}\label{cardk}
    \nu_k(x)
    :=
    \# T^{-k}(x) =
    \# \Sigma_k(x)
\end{equation}
The corresponding map
$
    \cN_N :=
        \begin{bmatrix}
        \nu_1 \\
        \vdots \\
        \nu_N
        \end{bmatrix}:
    \mZ^n \to \mZ_+^N
$
describes the \emph{cardinality structure} of the basin
of attraction (\ref{Nthbasin}). From
Theorem~\ref{markovbasin}  (under the assumption that the matrix $L$ is iteratively nonresonant), it
follows that for any $N \in \mN$, the map $\cN_N$ is
distributed and $\cS_{\cN_N} \subset \cQ _N^-$.

\begin{theorem}\label{martingale}
Suppose the matrix $L$ is iteratively nonresonant. Then the
following functions, obtained by rescaling   (\ref{cardk}) as
\begin{equation}
\label{nut}
    \wt{\nu}_N :=
    |\det L|^N \nu_N:
    \mZ^n \to \mR_+,
\end{equation}
 form a martingale \cite{Shiryayev} with respect to the filtration
$\{\cQ _N^-\}_{N \> 1}$ of the  algebras
(\ref{calQN})  in the sense that
\begin{equation}
\label{mart}
    \bA
    (
        \wt{\nu}_{N+1} \cI_{A}
    ) =
    \bA
    (
        \wt{\nu}_N \cI_{A}
    )
    \quad
    {\rm for\ all}\ N \in \mN,\ A \in \cQ _N^- .
\end{equation}
\end{theorem}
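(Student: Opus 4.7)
The plan is to reduce the martingale identity \eqref{mart} to a one-step conditional-expectation computation built on the recurrence \eqref{recSigma} for $\Sigma_N$, the independence and uniform distribution of the quantization errors $\wt{E}_k$ of the compensating system (Theorem~\ref{distrcomerrors}), and a classical lattice-point identity that will supply the crucial factor $|\det L|^{-1}$. First, I would record that the set-valued maps $\rho_N$ constructed in the proof of Theorem~\ref{markovbasin} take values in bounded subsets of $\mZ^n$ whose cardinalities are uniformly bounded: since $R^{-1}(0)$ is Jordan measurable (hence bounded) and $L$ is fixed, the recurrence \eqref{maprhoN} gives $\#\rho_N\<C^N$ for some constant $C$. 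Consequently $\nu_N=\#\circ\rho_N\circ \wt{\cE}_N$ is a bounded $\mes_{Nn}$-continuous function of $\wt{\cE}_N$, so all averages $\bA(\wt{\nu}_{N+1}\cI_A)$ are well-defined via Lemma~\ref{distrcriteria}.

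Now fix $N\in\mN$ and $A\in\cQ_N^-\subset\cQ_{N+1}^-$. By Theorem~\ref{distrcomerrors}(a), there is a Jordan measurable set $G\subset(\wt{R}^{-1}(0))^N$ such that $A=\wt{\cE}_N^{-1}(G)=\wt{\cE}_{N+1}^{-1}(G\x \wt{R}^{-1}(0))$, and $\wt{\cE}_{N+1}$ is $\mes_{(N+1)n}$-distributed. Applying Lemma~\ref{distrcriteria} to the bounded $\mes_{(N+1)n}$-continuous integrand $(u_1,\ldots,u_{N+1})\mapsto \#\rho_{N+1}(u_1,\ldots,u_{N+1})\cI_G(u_1,\ldots,u_N)$ and then Fubini,
$$
    \bA(\nu_{N+1}\cI_A)
    =
    \int_G
    \left(
        \int_{\wt{R}^{-1}(0)}
        \#\rho_{N+1}(u_1,\ldots,u_{N+1})
        \rd u_{N+1}
    \right)
    \rd u_1 \cdots \rd u_N .
$$
By \eqref{maprhoN}, the inner integrand equals $\#\left(\mZ^n\bigcap\left(u_{N+1}+L^{-1}(\rho_N(u_1,\ldots,u_N)+R^{-1}(0))\right)\right)$.

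The key geometric step is the identity
$$
    \int_V
    \#
    \left(
        \mZ^n \bigcap (u+S)
    \right)
    \rd u
    =
    \mes_n (S)
$$
valid for any Lebesgue measurable cell $V\subset\mR^n$ (in our case $V=\wt{R}^{-1}(0)$) and any bounded Borel set $S\subset\mR^n$. This is obtained by writing $\#(\mZ^n\bigcap(u+S))=\sum_{z\in\mZ^n}\cI_S(z-u)$, swapping sum and integral, changing variables, and using that the translates $V-z$ partition $\mR^n$ (Definition~\ref{cell}). Applied to $S:=L^{-1}(\rho_N(u_1,\ldots,u_N)+R^{-1}(0))$ and combined with the disjointness of the translates $a+R^{-1}(0)$ for $a\in\rho_N(u_1,\ldots,u_N)$ (again because $R^{-1}(0)$ is a cell), the inner integral evaluates to $|\det L|^{-1}\#\rho_N(u_1,\ldots,u_N)$. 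Substituting back and reapplying Lemma~\ref{distrcriteria} in the reverse direction gives
$$
    \bA(\nu_{N+1}\cI_A)
    =
    |\det L|^{-1}\bA(\nu_N\cI_A),
$$
and multiplying by $|\det L|^{N+1}$ produces \eqref{mart}.

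The main technical point I would expect to require care is the bookkeeping that ensures $\nu_{N+1}\cI_A$ is represented as a bounded $\mes_{(N+1)n}$-continuous function of $\wt{\cE}_{N+1}$ (so that Lemma~\ref{distrcriteria} applies), together with the $\mes_n$-continuity of $u\mapsto \#(\mZ^n\bigcap(u+S))$ needed to justify Fubini on the right-hand side; both reduce to the Jordan measurability of $R^{-1}(0)$ and the boundedness bound $\#\rho_N\< C^N$ noted above. The remainder of the argument is purely the one-step independence packaged by Theorem~\ref{distrcomerrors}.
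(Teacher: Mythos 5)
Your proof is correct and follows essentially the same approach as the paper: both reduce the martingale identity to the one-step computation $\int_{\wt{R}^{-1}(0)}\#\bigl(\mZ^n\bigcap\bigl(u+L^{-1}(A+R^{-1}(0))\bigr)\bigr)\rd u = \#A/|\det L|$, proved via the lattice-point identity $\int_V\#(\mZ^n\bigcap(u+S))\rd u=\mes_n S$ together with $\mes_n(A+R^{-1}(0))=\#A$. The only cosmetic difference is that the paper packages the computation as a property of the Markov transition kernel $\Delta(\cdot\mid\cdot)$ and invokes Theorem~\ref{markovbasin}, whereas you integrate out the last coordinate directly from the recursion \eqref{maprhoN} via Lemma~\ref{distrcriteria} and Fubini; the two presentations are interchangeable.
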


\begin{proof}
Since, by Theorem~\ref{markovbasin}, the sequence $(\Sigma_N)_{N \> 1}$ of
the set-valued maps (\ref{setSigmaN}) is a homogeneous
Markov chain with respect to the filtration $\{\cQ_N^-\}_{N \> 1}$ with the transition probabilities
(\ref{DelBA}), the statement of Theorem~\ref{martingale}
will be proved  if we show that
\begin{equation}\label{DelBA1}
    \sum_{B \in \wt{\Pi}}
    \Delta(B \mid A) \# B
    =
    \frac{\# A}{|\det L|}
    \quad
    {\rm for\ all}\
    A \in \wt{\Pi}.
\end{equation}
To this end, we will first prove that if $V, G \subset \mR^n$ are Lebesgue measurable
and $V$ is a cell, then
\begin{equation}\label{DelBA2}
    \int_{V}
    \#
    \Big(
        \mZ^n
        \bigcap
        (
            u + G
        )
    \Big)
    \rd u =
    \mes_n G.
\end{equation}
Indeed,
\begin{align*}
    \int_V
    \#
    \Big(
        \mZ^n
        \bigcap
        (
            u + G
        )
    \Big)
    \rd u
    & =
    \int_V
    \sum_{z \in \mZ^n}
    \cI_{u+G}(z)
    du \\
    & =
    \sum_{z \in \mZ^n}
    \int_V
    \cI_{G}(z-u)
    du =
    \sum_{z \in \mZ^n}
    \mes_n
    (
        G \bigcap (z-V)
    )\\
     & =
    \mes_n
    \Big(G \bigcap \big(\mZ^n - V\big)\Big)
    =
    \mes_n G.
\end{align*}
A combination of (\ref{DelBA}) and (\ref{DelBA2}) allows the sum on
the left-hand side of (\ref{DelBA1}) to be represented as
\begin{align}
    \nonumber
    \sum_{B \in \wt{\Pi}}
    \Delta(B \mid A) \# B
    & =
    \int_{\wt{R}^{-1}(0)}
    \#
    \Big(
        \mZ^n
        \bigcap
        \big(
            u + L^{-1}
            (
                A + R^{-1}(0)
            )
        \big)
    \Big)
    \rd u \\
    \label{DelBA3}
    & =
    \mes_n
    (
        L^{-1}
        (
            A + R^{-1}(0)
        )
    ) =
    \frac
    {\mes_n
    (
        A + R^{-1}(0)
    )}
    {|\det L|}.
\end{align}
Since $R^{-1}(0)$ is a cell in $\mR^n$ and $A$ is a finite subset of
$\mZ^n$, then\ $ \mes_n ( A + R^{-1}(0) ) = \# A $.
Hence, the right-hand side of (\ref{DelBA3}) leads to that of (\ref{DelBA1}),
which completes the proof of the theorem. \end{proof}

The martingale property (\ref{mart}) of the rescaled functions (\ref{nut})   established  by Theorem~\ref{martingale} implies that (under the assumption that $L$ is iteratively nonresonant)
the average values  of the functions (\ref{cardk}) can be computed as $
\bA ( \nu_k ) = |\det L|^{-k}$ for all $ k \in \mN $.

In order to provide another corollary from Theorem~\ref{markovbasin}, we will now consider
the set $T^N(\mZ^n)$ of those points of $\mZ^n$ which are
\emph{reachable} in $N$ iterates of  the transition operator $T$. These reachability sets satisfy
\begin{equation}\label{Nreachable}
    T^{N+1}
    (
        \mZ^n
    )
    \subset
    T^N
    (
        \mZ^n
    ) =
    \{
        x \in \mZ^n:\
        \nu_N(x) > 0
    \}
    \quad
    {\rm for\  all}\
    N \in \mN.
\end{equation}

\begin{theorem}\label{holes}
Suppose the matrix $L$ is iteratively nonresonant. Then for any
$N \in \mN$, the frequency of the $N$th reachability set in (\ref{Nreachable}) can be computed as
$$
    \bF
    (
        T^N
        (
            \mZ^n
        )
    ) =
    H_N(\{0\})
$$
in terms of
functions $H_N: \Pi \to
[0,1]$ which are governed by the following linear recurrence equation
$$
    H_N(A)
    =
    \sum_{B \in \Pi}
    \Delta(B \mid A)  H_{N-1}(B)
$$
for
all $ A \in \Pi $, with
the initial condition $H_0 = 1$, where $\Delta(\cdot \mid \cdot)$ is the Markov transition kernel from (\ref{DelBA}).
\end{theorem}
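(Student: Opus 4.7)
The plan is to identify $T^N(\mZ^n)$ with a ``survival'' event for the set-valued Markov chain $(\Sigma_N)$ from Theorem~\ref{markovbasin}, and then to compute its frequency by expanding along all surviving trajectories. First I would note that by (\ref{Nreachable}) and (\ref{cardk}),
\[
T^N(\mZ^n)
= \{x \in \mZ^n:\ \nu_N(x) > 0\}
= \{x \in \mZ^n:\ \Sigma_N(x) \ne \emptyset\}
= \Sigma_N^{-1}(\Pi).
\]
Next I would verify that $\emptyset$ is an absorbing state of the chain $(\Sigma_N)$: if $\Sigma_{N-1}(x) = \emptyset$ then the Minkowski sum $\Sigma_{N-1}(x) + R^{-1}(0)$ is empty, so by the recursion (\ref{recSigma}) also $\Sigma_N(x) = \emptyset$. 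Consequently $\{\Sigma_N \in \Pi\} = \bigcap_{k=1}^N\{\Sigma_k \in \Pi\}$, and this event decomposes as the disjoint union
\[
\Sigma_N^{-1}(\Pi)
= \bigsqcup_{(B_1,\ldots,B_N) \in \Pi^N}\
\bigcap_{k=1}^N \Sigma_k^{-1}(\{B_k\}).
\]

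The crucial observation is that the union is \emph{effectively finite}. A straightforward induction on $N$ from (\ref{recSigma}), using only that $R^{-1}(0)$ and $\wt{R}^{-1}(0)$ are bounded cells, yields a uniform-in-$x$ bound on the diameter of $\Sigma_N(x)$ depending solely on $N$ and $L$. Hence only finitely many tuples $(B_1,\ldots,B_N) \in \Pi^N$ yield a nonempty term, and the disjoint union lies in the backward algebra $\cQ_N^-$ on which $\bF$ is finitely additive. Applying Theorem~\ref{markovbasin} term by term then gives
\[
\bF(T^N(\mZ^n))
= \sum_{(B_1,\ldots,B_N) \in \Pi^N}\ \prod_{k=1}^N \Delta(B_k \mid B_{k-1}),
\qquad B_0 = \{0\},
\]
with all but finitely many summands vanishing. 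On the other hand, iterating the linear recurrence $H_N(A) = \sum_{B \in \Pi} \Delta(B \mid A) H_{N-1}(B)$ from $H_0 \equiv 1$ produces the same multinomial sum evaluated at $A = \{0\}$, yielding $\bF(T^N(\mZ^n)) = H_N(\{0\})$.

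The main technical obstacle is precisely this uniform boundedness of $\Sigma_N(x)$ in $x$: without it, the decomposition would be a genuinely countable disjoint union, and passing from Theorem~\ref{markovbasin} to the frequency formula would require countable additivity, which $\bF$ lacks. Everything else is bookkeeping. The same diameter bound simultaneously guarantees that for each $A \in \Pi$ the transition kernel $\Delta(\cdot \mid A)$, defined by (\ref{DelBA}), is supported on a finite subfamily of $\Pi$ (the image $B = \mZ^n \bigcap (u + L^{-1}(A + R^{-1}(0)))$ being contained in a bounded set as $u$ ranges over $\wt{R}^{-1}(0)$). Consequently the recurrence for $H_N$ is a finite sum at each step, and $H_N: \Pi \to [0,1]$ is unambiguously specified by $H_0 \equiv 1$.
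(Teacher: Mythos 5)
Your argument follows the paper's route exactly (the paper's own proof is a one-liner: invoke Theorem~\ref{markovbasin} together with the fact that $\emptyset$ is absorbing for the chain $(\Sigma_N)$), and your elaboration of the finite-additivity issue is a genuine and correct addition—this is indeed the point that the paper glosses over, and it does need the effective finiteness of the decomposition.

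One phrasing slip: a uniform bound on the \emph{diameter} of $\Sigma_N(x)$ would not by itself rule out infinitely many tuples $(B_1,\ldots,B_N)$, since the sets could still sit at arbitrary positions in $\mZ^n$. What the recursion (\ref{recSigma}) actually gives, by induction from $\Sigma_0\equiv\{0\}$ and the Jordan measurability (hence boundedness) of $\wt{R}^{-1}(0)$ and $R^{-1}(0)$, is containment of $\Sigma_N(x)$ in a fixed bounded set $S_N$ independent of $x$: take $S_0:=\{0\}$ and $S_N:=\wt{R}^{-1}(0)+L^{-1}\bigl(S_{N-1}+R^{-1}(0)\bigr)$, so that $\Sigma_N(x)\subset\mZ^n\bigcap S_N$ for every $x$. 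That is what makes both the disjoint union and the recurrence for $H_N$ finite. Your closing sentence already contains this observation (the image $B$ lies in a bounded set as $u$ ranges over $\wt{R}^{-1}(0)$), so the fix is cosmetic; with the word ``diameter'' replaced by ``set'', the proof is complete.
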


\begin{proof}  The assertion of the theorem follows from
Theorem~\ref{markovbasin} and from the property that $\emptyset$ is
an absorbing state of the set-valued Markov chain $(\Sigma_N)_{N
\> 1}$ in its state space $\wt{\Pi}$ defined by (\ref{spaceXi}). \end{proof}

From Theorem~\ref{holes}, it follows that, under the assumption that the matrix $L$ is iteratively nonresonant,
\begin{equation}\label{FTZ}
    \bF
    (
        T
        (
            \mZ^n
        )
    ) =
    H_1(\{0\}) =
    1 -
    \Delta
    (
        \emptyset \mid \{0\}
    ),
\end{equation}
where
\begin{equation}\label{Del00}
    \Delta
    (
        \emptyset \mid \{0\}
    ) =
    \mes_n
    \Big\{
        u \in \wt{R}^{-1}(0):\
        \mZ^n \bigcap
        \big(
            u + L^{-1} R^{-1}(0)
        \big) =
        \emptyset
    \Big\}
\end{equation}
is the frequency of the set $\mZ^n \setminus T(\mZ^n)$ of ``holes'' in $\mZ^n$ (not reachable for $T$) which quantifies nonsurjectivity of the transition operator $T$. Moreover, since the relation (\ref{FTZ}) involves only the first iterate of $T$, it remains valid if the nonsingular matrix $L$ is resonant (that is, not necessarily \emph{iteratively} nonresonant), with $T(\mZ^n)$ being a frequency measurable $L^{-1}$-quasiperiodic subset of $\mZ^n$.

\section
{Quantized linear systems: neutral case}
\label{QLSNC}
\setcounter{equation}{0}

\subsection
{The class of quantized linear systems being considered}
\label{CQLSBC}

We will now consider a particular class of quantized linear $(R,L)$-systems on the integer lattice $\mZ^n$ of even dimension $n := 2r$, with the matrix $L \in \mR^{n \x n}$ being similar to an orthogonal matrix:
\begin{equation}\label{LUJU}
    L
    :=
    \sum_{k=1}^{r}
    U_k J_k V_k.
\end{equation}
Here, $U_k \in \mR^{n \x  2}$ and $V_k \in \mR^{2\x n}$
are the blocks of a nonsingular matrix $U \in \mR^{n\x n}$ and its
inverse,
\begin{equation}\label{UkVk}
    U  :=
        \begin{bmatrix}
         U_1 & \ldots & U_r
         \end{bmatrix},
        \qquad
        U^{-1}
         :=
            \begin{bmatrix}
            V_1 \\
            \vdots \\
            V_r
            \end{bmatrix},
 \end{equation}
and
\begin{equation}\label{Jk}
    J_k :=
        \begin{bmatrix}
        \cos \theta_k & - \sin \theta_k \\
        \sin \theta_k &   \cos \theta_k
        \end{bmatrix}
\end{equation}
are the matrices of planar rotation by angles $\theta_k \in (0,2\pi)$.
The matrix $L$, given by (\ref{LUJU})--(\ref{Jk}), has $r$
two-dimensional invariant subspaces $U_k \mR^2$. For what follows, we assume that the spectrum of $L$ is nondegenerate, that is,
its eigenvalues $\re^{\pm i\theta_k}$,
$1 \< k \< r$, are all pairwise different.

Note that, for any given nonsingular matrix  $U \in \mR^{n \x n}$, those vectors  $\theta := (\theta_k)_{1 \< k \< r} \in (0,2\pi)^r$,  for which the corresponding matrix  $L$ in (\ref{LUJU})--(\ref{Jk}) is iteratively resonant, form a set of $r$-dimensional Lebesgue measure zero. Indeed, the set of such vectors $\theta$ is representable as a countable union
$$
    \bigcup_{N \> 1}\
    \bigcup_{u \in \mZ^{Nn}\setminus \{0\},
    v \in \mZ^n}
    \Theta_{N,u,v}
$$
of the sets
$$
    \Theta_{N,u,v}
    :=
    \Theta_{N,u,v}^{(1)}
    \x
    \ldots
    \x
    \Theta_{N,u,v}^{(r)},
$$
where
$$
    \Theta_{N,u,v}^{(k)}
    :=
    \left\{
        \theta_k \in (0,2\pi):\
        \sum_{\ell=1}^{N}
        J_k^{-\ell} U_k^{\rT} u_{\ell} = U_k v
    \right\}.
$$
Here, the vector $u$ is partitioned into $n$-dimensional subvectors $u_1, \ldots, u_N$. For any $N \in \mN$,\ $u \in \mZ^{Nn}\setminus \{0\}$ and $v \in \mZ^n$, there exists at least one value of the index $k=1, \ldots, r$ such that the set $\Theta_{N,u,v}^{(k)}$ consists of real roots of a nonconstant trigonometric polynomial and hence, is finite. Therefore, each of the sets $\Theta_{N,u,v}$ has zero $r$-dimensional Lebesgue measure and so does their countable union.

Hence, for any given nonsingular matrix $U \in \mR^{n \x n}$, the matrix $L$,  described by (\ref{LUJU})--(\ref{Jk}), is iteratively nonresonant and has a nondegenerate spectrum for $\mes_r$-almost all vectors $\theta \in (0,2\pi)^r$ of rotation angles.

Since the eigenvalues of the matrix $L$ have unit modulus (and hence, the linear dynamical system $x \mapsto Lx$ is marginally stable),  the corresponding quantized linear $(R,L)$-system will be called \emph{neutral}.

\subsection{A functional central limit theorem for the deviation of positive
semitrajectories of  the quantized linear and supporting
systems}\label{FCLTDPSQLSS}

In what follows, we will need a technical lemma which is given below for completeness of exposition.

\begin{lemma}
\label{LPsiL} Suppose the matrix $L \in \mR^{n \x n}$, given by
(\ref{LUJU})--(\ref{Jk}), has a nondegenerate spectrum. Then, for any real
positive definite  symmetric matrix $\Psi$ of order $n$,  the following convergence holds:
\begin{equation}\label{matrixPhi}
    \Phi
    :=
    \lim_{N \to +\infty}
    \left(
        \frac{1}{N}
        \sum_{\ell=1}^{N}
         L^{-\ell} \Psi
         (
            L^{-\ell}
         )^{\rT}
    \right)
        =
        \frac{1}{2}
        \sum_{k=1}^{r}
        \sigma_k^2 U_k U_k^{\rT},
 \end{equation}
 where
\begin{equation}\label{sigmak}
     \sigma_k
     :=
     \sqrt
     {\Tr
     (
        V_k \Psi V_k^{\rT}
     )}.
\end{equation}
\end{lemma}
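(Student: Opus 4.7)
The plan is to exploit the spectral structure of $L$, decomposing each power $L^{-\ell}$ into rotations on the invariant two-dimensional subspaces, and then apply an elementary Cesàro-averaging argument to the resulting oscillatory terms. First, reading $U^{-1}U = UU^{-1} = I_n$ in the block form (\ref{UkVk}) gives the biorthogonality $V_j U_k = \delta_{jk} I_2$ and completeness $\sum_{k=1}^r U_k V_k = I_n$; together with (\ref{LUJU}) they yield the spectral formula $L^{\ell} = \sum_{k=1}^r U_k J_k^{\ell} V_k$ for every $\ell \in \mZ$. Since each $J_k$ is orthogonal, $(L^{-\ell})^{\rT} = \sum_{k=1}^r V_k^{\rT} J_k^{\ell} U_k^{\rT}$, whence
\begin{equation*}
    L^{-\ell} \Psi (L^{-\ell})^{\rT}
    =
    \sum_{j,k=1}^r U_j J_j^{-\ell} A_{jk} J_k^{\ell} U_k^{\rT},
    \qquad
    A_{jk} := V_j \Psi V_k^{\rT}.
\end{equation*}
Averaging over $\ell = 1, \ldots, N$ commutes with the finite double sum, so it suffices to show that $M_{jk}^{(N)} := \frac{1}{N}\sum_{\ell=1}^N J_j^{-\ell} A_{jk} J_k^{\ell}$ converges to $\frac{1}{2}\delta_{jk}\sigma_k^2 I_2$ for each pair $(j,k)$.

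Next I would diagonalize each rotation as $J_k = P_k D_k P_k^{-1}$, with $D_k = \mathrm{diag}(\re^{i\theta_k}, \re^{-i\theta_k})$ and $P_k$ the standard complex eigenbasis. Setting $B_{jk} := P_j^{-1} A_{jk} P_k$, every entry of the averaged matrix $\frac{1}{N}\sum_{\ell=1}^N D_j^{-\ell} B_{jk} D_k^{\ell}$ has the form $(B_{jk})_{pq} \cdot \frac{1}{N}\sum_{\ell=1}^N \re^{i(\mu-\lambda)\ell}$, where $\re^{i\lambda}$ is a diagonal entry of $D_j$ and $\re^{i\mu}$ a diagonal entry of $D_k$ (so $\lambda \in \{\pm\theta_j\}$ and $\mu \in \{\pm\theta_k\}$). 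By the elementary geometric-sum estimate, this scalar average tends to $1$ if $\mu - \lambda \in 2\pi\mZ$ and to $0$ otherwise.

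The main point, and the only place where the hypotheses bite, is to rule out resonances $\mu \equiv \lambda \pmod{2\pi}$, i.e., $\re^{i\mu} = \re^{i\lambda}$. Since $\re^{i\mu}$ and $\re^{i\lambda}$ are eigenvalues of $L$, the nondegenerate-spectrum assumption forces these eigenvalues to coincide, hence $j = k$ and $\mu = \lambda$ (in particular $\theta_k \notin \{0,\pi\}$, since otherwise $\re^{i\theta_k} = \re^{-i\theta_k}$). Consequently $M_{jk}^{(N)} \to 0$ for $j \ne k$, while for $j = k$ only the two diagonal entries of $B_{kk}$ survive and the limit equals $P_k\,\mathrm{diag}\bigl((B_{kk})_{11}, (B_{kk})_{22}\bigr)\,P_k^{-1}$.

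A short direct calculation using the explicit entries of $P_k$ and $P_k^{-1}$ shows that for any real symmetric $2 \times 2$ matrix $A$ both diagonal entries of $P_k^{-1} A P_k$ equal $\frac{1}{2}\Tr A$; applied to $A_{kk} = V_k\Psi V_k^{\rT}$ (symmetric by construction and positive definite because $V_k$ has full row rank) this gives $(B_{kk})_{11} = (B_{kk})_{22} = \frac{1}{2}\sigma_k^2$ in the notation of (\ref{sigmak}). Hence $\lim_N M_{kk}^{(N)} = \frac{1}{2}\sigma_k^2\,P_k I_2 P_k^{-1} = \frac{1}{2}\sigma_k^2\,I_2$, and assembling the pieces yields $\Phi = \sum_{k=1}^r U_k\bigl(\frac{1}{2}\sigma_k^2 I_2\bigr)U_k^{\rT} = \frac{1}{2}\sum_{k=1}^r \sigma_k^2\,U_k U_k^{\rT}$, which is (\ref{matrixPhi}). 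I expect the resonance analysis to be the substantive step; the remaining $2\times 2$ arithmetic is essentially the statement that the averaged outer product of a uniformly rotating unit vector equals $\frac{1}{2}I_2$.
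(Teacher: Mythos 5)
Your proof is correct and follows essentially the same route as the paper: block decomposition of $L^{-\ell}\Psi(L^{-\ell})^{\rT}$ via (\ref{UkVk}), diagonalization of each $J_k$ in the complex eigenbasis $W=\frac{1}{\sqrt{2}}\begin{bmatrix}1&1\\-i&i\end{bmatrix}$, Cesàro averaging of the resulting oscillatory factors with the nondegenerate-spectrum hypothesis killing off-diagonal and cross-block terms, and finally the observation that conjugation of a real symmetric $2\times2$ matrix by $W$ produces equal diagonal entries $\frac{1}{2}\Tr$. The paper phrases the last two steps via a Hadamard product $F_{j,k}^{(N)}\odot(W^*\Psi_{j,k}W)$ and the identity $W(I_2\odot(W^*FW))W^*=\frac{\Tr F}{2}I_2$, but this is the same entry-wise calculation you carry out.
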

\begin{proof}
For any $N \in \mN$, let
\begin{equation}\label{matrixPhiN}
    \Phi_N
    :=
    \frac{1}{N}
    \sum_{\ell=1}^{N}
    L^{-\ell} \Psi \big(L^{-\ell}\big)^{\rT}
 \end{equation}
denote the matrix whose convergence is considered in (\ref{matrixPhi}). By using
(\ref{LUJU})--(\ref{UkVk}), this  matrix  can be represented as
\begin{equation}\label{matrixPhiN1}
    \Phi_N
    =
    \sum_{j,k=1}^{r}
    U_j \Phi_{j,k}^{(N)} U_k^{\rT},
\end{equation}
where
\begin{equation}\label{blockPhijk}
    \Phi_{j,k}^{(N)}
    :=
    \frac{1}{N}
    \sum_{\ell=1}^{N}
    J_j^{-\ell}
    \Psi_{j,k}
    J_k^{\ell},
\end{equation}
and
 \begin{equation}\label{Psijk}
    \Psi_{j,k}
    :=
    V_j  \Psi  V_k^{\rT}.
\end{equation}
Each of the planar rotation matrices in (\ref{Jk}) is representable as
\begin{equation}\label{JkW}
    J_k =
    W
        \begin{bmatrix}
        \re^{i\theta_k} & 0 \\
        0 &
    \re^{-i\theta_k}
        \end{bmatrix}
    W^*,
\end{equation}
where $(\cdot)^*$ denotes the complex  conjugate transpose, and
\begin{equation}\label{matrixW}
     W
     :=
     \frac{1}{\sqrt{2}}
        \begin{bmatrix}
        1 & 1 \\
        -i & i
        \end{bmatrix}.
 \end{equation}
Substitution of (\ref{JkW}) into (\ref{blockPhijk}) leads to
 \begin{equation}\label{PhijkN}
    \Phi_{j,k}^{(N)} =
    W
    (
        F_{j,k}^{(N)}
        \odot
        (
            W^* \Psi_{j,k} W
        )
    )
    W^*,
 \end{equation}
 where $\odot$ denotes the Hadamard product  of matrices \cite{Horn},  and \begin{equation}\label{matrixFjkN}
    F_{j,k}^{(N)} =
    \frac{1}{N}
    \sum_{\ell=1}^{N}
        \begin{bmatrix}
        \re^{-i\ell\theta_j} \\
        \re^{i\ell\theta_j}
        \end{bmatrix}
        \begin{bmatrix}
        \re^{-i\ell\theta_k} \\
        \re^{i\ell\theta_k}
        \end{bmatrix}^*.
 \end{equation}
Under the assumption that the spectrum of the matrix $L$ is nondegenerate, the following convergence holds:
 \begin{equation}\label{limFjkN}
    \lim_{N \to +\infty}
    F_{j,k}^{(N)}
    =
    \left\{
        \begin{matrix}
        I_2 & {\rm for} & j = k \\
        0 & {\rm for} & j \ne k
        \end{matrix}
    \right..
 \end{equation}
A straightforward verification leads to the property of the matrix $W$ in (\ref{matrixW}) that
\begin{equation}
\label{Hadiden}
    W
    \left(
        I_2 \odot
        (
            W^* F W
        )
    \right) W^*
    =
    \frac{\Tr F}{2}
    I_2
 \end{equation}
for any real symmetric $(2\x 2)$-matrix $F$. Now, from  (\ref{matrixPhiN1}) and (\ref{PhijkN})--(\ref{Hadiden}), it follows that
\begin{equation}
\label{xxxx}
    \lim_{N \to +\infty}
    \Phi_N =
    \frac{1}{2}
    \sum_{k=1}^{r}
    U_k U_k^{\rT}
    \Tr \Psi_{k,k}.
\end{equation}
In view of (\ref{sigmak}), (\ref{matrixPhiN}) and (\ref{Psijk}), the convergence (\ref{xxxx}) is equivalent to (\ref{matrixPhi}), which completes the proof of the lemma. \end{proof}

Note that the matrix $\Phi$, given by (\ref{matrixPhi}) and (\ref{sigmak}), is symmetric, positive definite and satisfies $ L \Phi L^{\rT} = \Phi $. The latter property implies the orthogonality of the matrix $\Phi^{-1/2} L \sqrt{\Phi}$, where $\sqrt{\Phi} \in \mR^{n\x n}$ is a (not necessarily symmetric) matrix square root of $\Phi$ (satisfying $\sqrt{\Phi}(\sqrt{\Phi})^{\rT} = \Phi$) with the inverse $\Phi^{-1/2} := (\sqrt{\Phi})^{-1}$. It will be convenient to use the following  matrix square root of $\Phi$:
\begin{equation}\label{Phiroot}
    \sqrt{\Phi}
    :=
    \frac{1}{\sqrt{2}}
        \begin{bmatrix}
        \sigma_1 U_1 & \ldots & \sigma_r U_r
        \end{bmatrix}
\end{equation}
which satisfies
\begin{equation}\label{PhiLPhi}
    \Phi^{-1/2}  L \sqrt{\Phi}
    =
        \begin{bmatrix}
        J_1 &  & 0 \\
        & \ddots &  \\
        0 &  & J_r
        \end{bmatrix}= J.
 \end{equation}

Now, let the matrix $\Psi \in  \mR^{n \x n}$ in Lemma~\ref{LPsiL} be the covariance matrix of  the uniform distribution over the set $R^{-1}(0)$:
\begin{equation}
\label{matrixPsi}
    \Psi
    :=
    \int_{R^{-1}(0)}
    (u-\mu)(u-\mu)^{\rT}\rd u,
\end{equation}
where $\mu$ is the mean vector from (\ref{vectormu}). The matrix $\Psi$ is symmetric and positive definite. Its nonsingularity follows from the property that $R^{-1}(0)$ is a cell in $\mR^n$ and hence, can not be contained by an $(n-1)$-dimensional hyperplane.

By using the matrix $\Phi \in \mR^{n\x n}$ expressed through (\ref{matrixPhi}) in terms of the matrix $\Psi$ in (\ref{matrixPsi}), and recalling (\ref{deviation}), we will now define, for arbitrary  $N \in \mN$ and $t \in [0,1]$, a map $W_{N,t}:  \mZ^n \to \mR^n$ by
\begin{equation}\label{WNt}
    W_{N,t}
    :=
    \frac{1}{\sqrt{N}}
    \Phi^{-1/2}
    \left(
        \xi_{\lfloor Nt \rfloor} + \lfp Nt \rfp
        \big(
            \xi_{\lceil Nt \rceil} -
            \xi_{\lfloor Nt \rfloor}
        \big)
    \right).
\end{equation}
Here, $\lceil  \cdot \rceil$  denotes  the ceiling function, and $t$ plays the role of a continuous  time variable. For any fixed $N \in \mN$ and $x \in \mZ^n$, the map $W_{N,\bullet}(x): [0,1] \to \mR^n$ is a continuous piecewise linear function which satisfies
$$
    W_{N,\frac{k}{N}}(x)
    =
        \frac{1}{\sqrt{N}}
    \Phi^{-1/2}
    \xi_k(x),
    \qquad
    k = 0, \ldots, N.
$$
Hence, the map $W_{N,t}$ in (\ref{WNt}) generates a map $\wt{W}_N: \mZ^n \to C$ which maps a point $x \in \mZ^n$ to the function $W_{N,\bullet}(x)$ belonging to the complete separable metric space
 \begin{equation}\label{spaceC0}
    C
    :=
    C_0
    (
        [0,1], \mR^n
    )
 \end{equation}
of continuous functions $f: [0,1] \to \mR^n$ satisfying $f(0) = 0$. The space $C$ is endowed in a standard fashion with the uniform norm $\| f \| := \max_{t \in [0,1]} |f(t)|$ and the corresponding Borel $\sigma$-algebra $\cB_C$.

Under the assumptions that the matrix $L$ is iteratively nonresonant and has a nondegenerate  spectrum, the family $\{W_{N,t}:\ t \in [0,1]\}$  of the maps (\ref{WNt}) can be regarded, for any given $N\in \mN$,  as an $\mR^n$-valued random process on the probability space $(\mZ^n, \cQ ^+, \bF)$. This follows from the property (which can be obtained by using Theorem~\ref{distrerrors}) that for any $p \in \mN$ and any reals $0 < t_1 < \ldots < t_p \< 1$, the map
\begin{equation}\label{colWNt}
        \mho_{N,t_1, \ldots, t_p}
        :=
        \begin{bmatrix}
        W_{N,t_1} \\
        \vdots \\
         W_{N,t_p}
         \end{bmatrix}:\,
    \mZ^n \to \mR^{pn}
\end{equation} is distributed with  a   countably additive probability measure
$D_{N, t_1, \ldots, t_p}$ on $(\mR^{pn}, \cB^{pn})$ (with $\cB^{pn}$ the $\sigma$-algebra of
Borel subsets of $\mR^{pn}$),  and the algebra induced by this map is
contained by the algebra $\cQ _N^+$ given by (\ref{calQN}).  The
probability measures  $D_{N,  t_1, \ldots, t_p}$ satisfy the
compatibility conditions of A.N.Kolmogorov~\cite{Shiryayev} and are the
finite-dimensional distributions of the process (\ref{WNt}).

On the other hand, under the above mentioned assumptions on matrix $L$, for any $N \in \mN$,  the map $\wt{W}_N:  \mZ^n \to C$,  specified by by (\ref{WNt}),  is distributed with a countably additive probability measure $\wt{D}_N$ on  $(C, \cB_C)$, and the algebra induced by this map is again contained by $\cQ _N^+$. Therefore, the map $\wt{W}_N$ can be regarded as a random element on the probability space $(\mZ^n, \cQ ^+,\bF)$ with values in the metric space $C$. The relationship between the probability measures $\wt{D}_N$ and $D_{N, t_1, \ldots, t_p}$ is described in terms of the cylinder sets as
$$
    D_{N,  t_1, \ldots, t_p}(B) =
    \wt{D}_N
    \left\{
        f \in C:
            \begin{bmatrix}
            f(t_1) \\
            \vdots \\
            f(t_p)
            \end{bmatrix}
        \in
        B
    \right\}
$$
for all $ B \in \cB^{pn} $.

We will now establish a functional central limit theorem for the map $\wt{W}_N$, that is, the weak convergence of the probability measure $\wt{D}_N$ as $N \to +\infty$, to the countably additive probability measure $\cW$ on $(C, \cB_C)$ corresponding to the $n$-dimensional standard Wiener process. This will be carried out using the standard scheme~\cite{Billingsley}:  first, the weak convergence of the finite-dimensional distributions $D_{N,t_1,\ldots,t_p}$ of the random process (\ref{WNt}) to those of the standard Wiener process will be proved; then it will be shown that the sequence of probability measures $(\wt{D}_N)_{N \> 1}$ satisfies the Prokhorov criterion of relative compactness in the topology of weak convergence of probability measures on $(C, \cB_C)$.

\begin{lemma}\label{flt}
Suppose the matrix $L$ in (\ref{LUJU})--(\ref{Jk}) is iteratively nonresonant and has a nondegenerate
spectrum. Then the finite-dimensional
distributions  of the  random process (\ref{WNt}) weakly
converge to those of the  $n$-dimensional standard Wiener
process as  $N \to +\infty$. More precisely, for any $p \in
\mN$,  any reals
 \begin{equation}\label{tk}
     t_0 := 0 < t_1 <\ldots < t_p \< 1
 \end{equation}
 and any Jordan measurable  set $B \subset \mR^{pn}$, the distribution of the map
        $\mho_{N, t_1, \ldots, t_p}$ in (\ref{colWNt}) satisfies
\begin{align}
    \nonumber
    \lim_{N \to +\infty}\!
    \bF\!
    \left\{
        x\in \mZ^n:\,
        \mho_{N, t_1, \ldots, t_p}(x) \in B
%
    \right\} \!
    =&
    (2\pi)^{-pn/2}
    \prod_{k=1}^{p}
    (
        t_k-t_{k-1}
    )^{-n/2} \\
    \label{limFANp}
    & \x\!
    \int_{B}
    \exp
    \left(
        -\frac{1}{2}
        \sum_{k=1}^{p}
        \frac{|u_k -u_{k-1}|^2}{t_k-t_{k-1}}
    \right)
    \rd u_1\!\x \ldots \x\! \rd u_p,
\end{align}
where $u_0 := 0$.
\end{lemma}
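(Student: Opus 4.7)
The plan is to reduce the statement, via the distributed-map framework of Section \ref{DM} together with Theorem \ref{distrerrors}, to a classical multivariate Lindeberg central limit theorem applied to the increments of the process $W_{N,\bullet}$.

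As noted immediately before the lemma, the map $\mho_{N,t_1,\ldots,t_p}$ is $D_{N,t_1,\ldots,t_p}$-distributed on $(\mR^{pn}, \cB^{pn})$. By Theorem \ref{distrerrors}, $D_{N,t_1,\ldots,t_p}$ coincides with the classical joint law, on a standard probability space, of $\mho_{N,t_1,\ldots,t_p}$ regarded as a linear-affine functional of a sequence $E_1,\ldots,E_{\lceil Nt_p\rceil}$ of iid random vectors uniformly distributed on $R^{-1}(0)$. Since $B$ is Jordan measurable, $\mes_{pn}(\partial B) = 0$; the Gaussian measure whose density appears on the right-hand side of (\ref{limFANp}) is absolutely continuous with respect to $\mes_{pn}$, so $B$ is a continuity set for that limit. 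Thus, by Definition \ref{distribution} and the portmanteau theorem, it suffices to prove that $D_{N,t_1,\ldots,t_p}$ converges weakly to the Gaussian measure in question.

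I would next pass to the increments $\Delta_k := W_{N,t_k} - W_{N,t_{k-1}}$, $t_0:=0$, via an affine bijection of unit Jacobian. Up to a remainder $r_{N,k}$ of order $O(1/\sqrt{N})$ arising from the piecewise-linear interpolation and floor/ceiling rounding in (\ref{WNt}), the classical-law representative of $\Delta_k$ is
\[
    \Delta_k
    \;=\;
    \frac{1}{\sqrt{N}}
    \sum_{\ell \in I_k}
    \Phi^{-1/2} L^{-\ell}(E_\ell - \mu)
    + r_{N,k},
    \qquad
    I_k := \{\lfloor Nt_{k-1}\rfloor+1,\ldots,\lfloor Nt_k\rfloor\}.
\]
The key algebraic identity, derived from (\ref{PhiLPhi}), is $\Phi^{-1/2}L^{-\ell} = J^{-\ell}\Phi^{-1/2}$; since $J$ is orthogonal, each summand above has norm uniformly bounded by $\|\Phi^{-1/2}\|\,\mathrm{diam}(R^{-1}(0))/\sqrt{N}$. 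The index sets $I_1,\ldots,I_p$ are pairwise disjoint, so the mutual independence of $E_1, E_2,\ldots$ (Theorem \ref{distrerrors}) yields the mutual independence of the leading terms of $\Delta_1,\ldots,\Delta_p$, and the problem decouples into $p$ marginal CLTs.

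For each $k$, the Lindeberg condition holds trivially by the uniform $O(1/\sqrt{N})$ bound on the summands; centered-ness is immediate from (\ref{vectormu}). The covariance matrix of the leading sum equals
\[
    \Phi^{-1/2}
    \Big(\tfrac{1}{N}\textstyle\sum_{\ell \in I_k} L^{-\ell}\Psi (L^{-\ell})^{\rT}\Big)
    (\Phi^{-1/2})^{\rT},
\]
with $\Psi$ the covariance matrix (\ref{matrixPsi}) of the uniform distribution on $R^{-1}(0)$. Writing the sum over $I_k$ as the difference of the two Cesàro partial sums up to $\lfloor Nt_k\rfloor$ and $\lfloor Nt_{k-1}\rfloor$, and invoking Lemma \ref{LPsiL} on each, this converges to $(t_k-t_{k-1})\Phi^{-1/2}\Phi(\Phi^{-1/2})^{\rT} = (t_k-t_{k-1}) I_n$. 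The multivariate Lindeberg CLT then gives $\Delta_k \Rightarrow \cN(0, (t_k-t_{k-1})I_n)$; a Slutsky argument absorbs $r_{N,k}$; joint independence across $k$ produces the Gaussian product limit whose density is precisely the right-hand side of (\ref{limFANp}); and the portmanteau conclusion of the second paragraph delivers (\ref{limFANp}). The only delicate point, which I expect to be the main obstacle, is the bookkeeping needed to show that the boundary contributions at the endpoints of the $I_k$'s (including the interpolation remainder $r_{N,k}$) are uniformly $O(1/\sqrt{N})$ and do not interfere across different $k$'s in the joint Slutsky step; the neutrality of $L$ is used precisely here, through the uniform bound $\sup_\ell \|L^{\pm\ell}\| < \infty$ that follows from the similarity of $L$ to an orthogonal matrix.
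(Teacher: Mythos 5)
Your proposal is correct and reaches the same conclusion, but it takes a genuinely different route from the paper. The paper also reduces immediately to weak convergence of $D_{N,t_1,\ldots,t_p}$ and invokes independent increments, but it then states (without the increment bookkeeping) that this ``reduces to proving the weak convergence of $D_N$ [the law of $W_{N,1}$]'', and proves that one-marginal CLT by a \emph{direct characteristic function computation}: expanding $\phi_N(u) = \prod_{k=1}^N \phi\bigl(N^{-1/2}(\Phi^{-1/2}L^{-k})^{\rT}u\bigr)$, taking logarithms, using the Taylor expansion $\phi(v) = 1 - \tfrac{1}{2}\|v\|_\Psi^2 + o(|v|^2)$, and applying Lemma \ref{LPsiL} to identify the limit as $\re^{-|u|^2/2}$. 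You instead pass explicitly to the increments $\Delta_k$, verify the Lindeberg condition via the uniform $O(N^{-1/2})$ bound (which, as you note, follows from the orthogonality identity $\Phi^{-1/2}L^{-\ell}=J^{-\ell}\Phi^{-1/2}$), compute the asymptotic covariance via Lemma \ref{LPsiL}, invoke the multivariate Lindeberg CLT as a black box, and absorb the interpolation remainders $r_{N,k}$ by Slutsky. Both proofs rely on the same two inputs --- Theorem \ref{distrerrors} for mutual independence and uniformity of the quantization errors, and Lemma \ref{LPsiL} for the asymptotic covariance --- so the substance is shared. What your version buys is an explicit, rigorous treatment of the reduction from the $p$-point joint distribution to the single-increment CLT and of the endpoint/interpolation corrections, which the paper glosses over in a single sentence; what you pay for is the reliance on an external CLT theorem, whereas the paper's characteristic-function argument is entirely self-contained and handles the general covariance matrix $\Psi$ in a seamless way without needing to bound individual summands.
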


\begin{proof}
Let the numbers $p \in \mN$ and $t_1, \ldots, t_p$ in (\ref{tk}) be fixed but otherwise arbitrary. Then, for
any
$
    N
    \>
    \frac{1}{\min_{1 \< k \< p}(t_k -
    t_{k-1})}
$,
 the distribution $D_{N,  t_1, \ldots, t_p}$ of the map $\mho_{N, t_1, \ldots, t_p}$
in (\ref{colWNt})  is absolutely continuous with respect to
$\mes_{pn}$. Hence, for any such $N$, the following equality  holds for any Jordan measurable set
$B \subset \mR^{pn}$:
$$
    \bF
    \left\{
        x\in \mZ^n:\,
        \mho_{N, t_1, \ldots, t_p}(x)
        \in
        B
    \right\} =
    D_{N,  t_1, \ldots, t_p}(B).
$$
Therefore, the assertion of the lemma will be proved if we establish the weak convergence of the probability measure $D_{N,  t_1, \ldots, t_p}$ to the joint distribution of the values of the $n$-dimensional standard Wiener process at the moments of time (\ref{tk}). In view of the property that the sequence $(\xi_k)_{k\> 1}$ of the maps (\ref{deviation}) has independent increments, this task reduces to proving the weak convergence of the distribution $D_N$ of the map
\begin{equation}\label{WN1}
    W_{N,1} =
    \frac{1}{\sqrt{N}}\Phi^{-1/2}
    \xi_N
    =
    \frac{1}{\sqrt{N}}\Phi^{-1/2}     \sum_{k=1}^{N} L^{-k}
    (
        E_k - \mu
    ) :
    \mZ^n \to \mR^n
\end{equation}
to the $n$-dimensional Gaussian measure $D$ with zero
mean vector and the identity covariance matrix.
The probability  measure $D_N$ can be uniquely recovered from its
characteristic function $\phi_N: \mR^n \to \mC$
defined by \cite{Shiryayev}
\begin{equation}\label{phiN1}
    \phi_N(u) :=
    \int_{\mR^{pn}}
    \re^{i u^{\rT} v}
     D_N(\rd v).
 \end{equation}
From Theorem~\ref{distrerrors} and (\ref{WN1}), it follows that
\begin{equation}\label{phiN2}
    \phi_N(u) =
    \prod_{k=1}^{N}
    \phi
    \left(
        \frac{1}{\sqrt{N}}
        (
            \Phi^{-1/2} L^{-k}
        )^{\rT}
        u
    \right),
\end{equation}
where $\phi: \mR^n \to \mC$ is the characteristic function for
the uniform distribution over the set  $R^{-1}(0)-\mu$:
 \begin{equation}\label{phiv1}
    \phi(v)
    :=
    \int_{R^{-1}(0)}
    \re^{i v^{\rT} (w-\mu)}
     \rd w.
\end{equation}
Note that the function (\ref{phiv1}) is infinitely differentiable with respect to $v \in \mR^n$ and its asymptotic behaviour near the origin is
\begin{equation}\label{phiv2}
    \phi(v)
    =
    1 - \frac{1}{2} \| v \|_{\Psi}^2 + o(|v|^2)
    \quad
    {\rm as}\
    v \to 0,
\end{equation}
where $\|v\|_{\Psi} := \sqrt{v^{\rT} \Psi v}$ denotes the Euclidean
norm in $\mR^n$ associated with the covariance matrix $\Psi$ in (\ref{matrixPsi}). By using
(\ref{phiv2}), (\ref{phiN2}) and Lemma~\ref{LPsiL}, it follows that
\begin{align*}
    \lim_{N \to +\infty}
    {\rm ln}
    \phi_N(u)
    & =
    -\frac{1}{2}
     \lim_{N \to +\infty}
     \left(
        \frac{1}{N}
        \sum_{k=1}^{N}
        \left\|
            (
                \Phi^{-1/2} L^{-k}
            )^{\rT}
            u
        \right\|^2_{\Psi}
     \right) \\
    & =
    - \frac{1}{2}
    u^{\rT}
    \Phi^{-1/2}
    \lim_{N \to +\infty}
    \left(
        \frac{1}{N}
        \sum_{k = 1}^{N}
        L^{-k}
        \Psi
        (L^{-k})^{\rT}
    \right)
    (\Phi^{-1/2})^{\rT}
    u
     =
    - \frac{|u|^2}{2}
\end{align*}
holds for any $u \in \mR^n$,
 and hence,
$
     \lim_{N \to +\infty}
     \phi_N(u)  =
     \re^{-\frac{|u|^2}{2}}
$. The latter limit implies that $D_N$ converges weakly to the standard normal distribution $D$ in $\mR^n$
as $N \to +\infty$, and the lemma is proved. \end{proof}

In order to formulate the functional central limit theorem below, we will now define
the subalgebra
\begin{equation}\label{tildBC0}
    \wt{\cB}_C
    :=
    \left\{
        B \in \cB_C:\
        \cW
        (
            \partial B
        ) = 0\
        {\rm and}\
        \wt{D}_N
        (
            \partial B
        )=0\
        {\rm for\ all\ sufficiently\ large}\ N \in \mN \right\}
\end{equation}
of the algebra of $\cW$-continuous Borel subsets of the
metric space (\ref{spaceC0}) (as before, $\cW$ denotes  the
Wiener measure on $( C, \cB_C)$).

\begin{theorem}\label{mainflt}
Suppose the matrix $L$ in (\ref{LUJU})--(\ref{Jk}) is iteratively nonresonant and has a nondegenerate spectrum. Then,  for any $B \in
\wt{\cB}_C$,
\begin{equation}\label{mainflt1}
    \lim_{N \to +\infty}
    \bF
    \left\{
        x \in \mZ^n:\
        \wt{W}_N(x) \in B
    \right\} =
    \cW(B) .
\end{equation}
\end{theorem}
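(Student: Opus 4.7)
The plan is to follow Donsker--Billingsley's classical two-step scheme for proving invariance principles in the space $C$ of (\ref{spaceC0}): establish (i) convergence of the finite-dimensional distributions of the process $\{W_{N,t}\}_{t\in[0,1]}$, and (ii) tightness of the sequence $(\wt D_N)_{N\> 1}$ of induced probability measures on $(C,\cB_C)$. Together these yield the weak convergence $\wt D_N \Rightarrow \cW$ in the topology of weak convergence on $(C,\cB_C)$. The desired identity (\ref{mainflt1}) is then obtained from the Portmanteau theorem, applied to the set $B\in\wt\cB_C$ in (\ref{tildBC0}), in combination with the fact that, because $\wt W_N$ is $\wt D_N$-distributed and $\wt D_N(\partial B)=0$ for all sufficiently large $N$, the frequency on the left-hand side of (\ref{mainflt1}) coincides with $\wt D_N(B)$ as soon as $N$ is large.

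Step (i) has already been carried out: by Lemma~\ref{flt}, for every $p\in\mN$ and every sequence of times (\ref{tk}), the joint distribution of $\mho_{N,t_1,\ldots,t_p}$ in (\ref{colWNt}) converges weakly, as $N\to+\infty$, to the joint law of the $n$-dimensional standard Wiener process at the instants $t_1,\ldots,t_p$, whose density appears on the right-hand side of (\ref{limFANp}).

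Step (ii) is where I expect the main work to lie. I would verify the Billingsley modulus-of-continuity criterion for tightness on $C$. The key structural inputs are: first, that $L$ is similar to an orthogonal matrix in the sense of (\ref{LUJU})--(\ref{Jk}), so that the integer powers $L^{-k}$ are uniformly bounded in operator norm; second, that the quantization errors take values in the bounded Jordan measurable cell $R^{-1}(0)$; and third, that, by Theorems~\ref{distrerrors} and~\ref{distrerrors1}, the increments $\Delta_k := L^{-k}(E_k-\mu)$ of the process $\xi_N$ in (\ref{deviation}) are mutually $\bF$-independent, $\cQ^+$-measurable, centered, and uniformly bounded. By Theorem~\ref{distrerrors}, all moment-type quantities of the form $\bA\bigl(\phi(\Delta_{k_1},\ldots,\Delta_{k_r})\bigr)$ for Jordan continuous bounded $\phi$ reduce to ordinary Lebesgue integrals against the product Haar measure on $(R^{-1}(0))^r$. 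Thus for any grid points $0 \< k_1 < k_2 \< N$,
\begin{equation*}
    \bA\Bigl(\bigl|W_{N,k_2/N} - W_{N,k_1/N}\bigr|^4\Bigr)
    \< C\Bigl(\tfrac{k_2-k_1}{N}\Bigr)^2
\end{equation*}
for a constant $C$ independent of $N$, $k_1$, $k_2$, by the standard fourth-moment bound for sums of bounded independent centered vectors. Because $W_{N,\bullet}(x)$ is piecewise linear by (\ref{WNt}), the analogous bound for arbitrary $0\le s<t\le 1$ follows by interpolation, and the supremum in the modulus of continuity need only be controlled over finitely many grid pairs for each fixed $N$ — this is what keeps the argument within the finitely additive calculus of Section~\ref{AF}. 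Billingsley's Theorem~15.6 (the Kolmogorov--Chentsov tightness criterion, applied to the countably additive measure $\wt D_N$ on $(C,\cB_C)$) then yields the required tightness.

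The main obstacle I anticipate is the translation of the moment bounds from the finitely additive probability space $(\mZ^n,\cQ^+,\bF)$ into statements that feed cleanly into Billingsley's tightness criterion, which is classically phrased for countably additive probability spaces. The right move is to exploit that $\wt W_N$ is $\wt D_N$-distributed for a \emph{countably} additive $\wt D_N$ on $(C,\cB_C)$: tightness is a property of this sequence of measures, and all moment inequalities for $\wt D_N$ (via its finite-dimensional marginals $D_{N,t_1,\ldots,t_p}$) reduce, through Theorem~\ref{distrerrors} and Lemma~\ref{LPsiL}, to the Lebesgue-integral computations already used in the proof of Lemma~\ref{flt}. Once tightness and finite-dimensional convergence are established, Prokhorov's theorem yields $\wt D_N\Rightarrow \cW$, and Portmanteau applied on $\wt{\cB}_C$ completes the proof of~(\ref{mainflt1}).
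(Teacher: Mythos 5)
Your proposal follows the paper's scheme in all essentials: transfer the problem to the countably additive measures $\wt D_N$ on $(C,\cB_C)$; obtain finite-dimensional convergence from Lemma~\ref{flt}; prove tightness via fourth-moment estimates that exploit the mutual independence and uniform boundedness of the increments $L^{-\ell}(E_\ell - \mu)$ together with the orthogonality of $J = \Phi^{-1/2}L\sqrt{\Phi}$; then invoke Portmanteau on $\wt{\cB}_C$ after noting that $\bF(\wt W_N^{-1}(B)) = \wt D_N(B)$ for large $N$. The one place where you diverge from the paper is in which sufficient condition for tightness you verify. You propose the moment criterion $\bA\big(|W_{N,t}-W_{N,s}|^4\big) \le C\,(t-s)^2$, checked on grid pairs and extended to non-grid $s,t$ by exploiting the piecewise linear structure of $W_{N,\bullet}$; the paper instead verifies the oscillation condition (\ref{love00}) by first passing the maximum over the window $j\le k\le N$ to a single tail with a L\'{e}vy/Ottaviani-type maximal inequality (their inequality (10.7) of Billingsley) and then applying Chebyshev with the fourth moment of the full tail sum, producing the explicit bound (\ref{last}). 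Both are standard routes to tightness in Billingsley's $C[0,1]$ theory, both rest on the same probabilistic inputs from Theorems~\ref{distrerrors} and~\ref{distrerrors1} and Lemma~\ref{LPsiL}, and your moment bound is correct: with $m := k_2-k_1$ the fourth moment of the sum is $O(m^2)$ by independence and boundedness, so scaling by $N^{-2}$ yields $O((m/N)^2)$. One small citation slip: in the 1968 edition of Billingsley the moment-based tightness theorem for $C[0,1]$ is Theorem~12.3, not Theorem~15.6 (the latter concerns $D[0,1]$), but this is purely a reference issue and not a gap in the argument.
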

\begin{proof}
For any element $B\in \wt{\cB}_C$ of the algebra (\ref{tildBC0}) and for
all sufficiently large $N \in \mN$, the set
$
    \wt{W}_N^{-1}(B) =
    \left\{
        x \in \mZ^n:\
        \wt{W}_N(x) \in B
    \right\}
$
is frequency measurable, and its frequency is given by
$
    \bF
    (
        \wt{W}_N^{-1}(B)
    ) =
    \wt{D}_N(B)
$.
Since every $B \in  \wt{\cB}_C$ is a $\cW$-continuous Borel subset of $C$, the relation (\ref{mainflt1}) will follow from the weak convergence of the measure $\wt{D}_N$ to $\cW$ as $N \to +\infty$. Therefore, in view of Lemma~\ref{flt}, it remains to show that the sequence $(\wt{D}_N)_{N \> 1}$ satisfies the Prokhorov criterion of relative compactness in the topology of weak convergence of probability measures on $(C, \cB_C)$. The latter task  reduces to verifying the following condition (see, for example, \cite[Theorem 8.4]{Billingsley}):
 \begin{equation}\label{love00}
    \inf_{\lambda > 1}
    \left(
        \lambda^2
        \limsup_{N \to +\infty}
        \max_{0 \< j \< N}
        \bF
        \left\{
            x \in \mZ^n:\
            \max_{j \< k \< N}
            \left|
                W_{N, \frac{j}{N}}(x) -
                W_{N, \frac{k}{N}}(x)
            \right|
            \>
            \lambda
        \right\}
    \right) = 0 .
\end{equation}
To this end, let $N \in \mN$,\ $0 \< j \< N$ and $\lambda > 1$ be fixed but otherwise arbitrary. Since, under the assumption that $L$ is iteratively nonresonant, the sequence of the maps (\ref{deviation}) has mutually independent increments with zero average value, then a multidimensional version of \cite[inequality (10.7)]{Billingsley} leads to
\begin{align}
    \nonumber
    & \bF
    \left\{
        x \in \mZ^n:\
        \max_{j \< k \< N}
        \left|
            W_{N, \frac{j}{N}}(x) -
            W_{N, \frac{k}{N}}(x)
        \right| \> \lambda
    \right\} \\
    \nonumber
     & =
    \bF
    \left\{
        x \in \mZ^n:\
        \frac{1}{\sqrt{N}}
        \max_{j \< k \< N}
        \left|
            \Phi^{-1/2}
            \sum_{\ell = j+1}^{k} L^{-\ell}
            (
                E_{\ell}(x)-\mu
            )
        \right|
        \>
        \lambda
    \right\} \\
    \label{love01}
    & \<
    2
    \bF
    \left\{
        x \in \mZ^n:\
        \frac{1}{\sqrt{N}}
        \left|
            \Phi^{-1/2}
            \sum_{\ell = j+1}^{N} L^{-\ell}
            (
                E_{\ell}(x)-\mu
            )
        \right|
        \> \lambda - \sqrt{2}
    \right\} .
\end{align}
By applying the Chebyshev inequality \cite{Shiryayev} to the right-hand side of
(\ref{love01}), it follows that, for any $\lambda > \sqrt{2}$,
\begin{align}
    \nonumber
    & \bF
    \left\{
        x \in \mZ^n:\
        \frac{1}{\sqrt{N}}
        \left|
            \Phi^{-1/2}
            \sum_{\ell = j+1}^{N} L^{-\ell}
            (
                E_{\ell}(x)-\mu
            )
        \right|
        \>
        \lambda - \sqrt{2}
    \right\} \\
    \label{love02}
    & \qquad \qquad \<
    \frac{1}{N^2    (
        \lambda - \sqrt{2}
    )^4
}\,
    \bA
    \left(
        \left|
            \Phi^{-1/2}
            \sum_{\ell = j+1}^{N} L^{-\ell}
            (
                E_{\ell}-\mu
            )
        \right|^4
    \right)  .
 \end{align}
The fourth moment on the right-hand side of (\ref{love02}) can be estimated by using the mutual independence and uniform distribution of the quantization errors over  the set $R^{-1}(0)$. From the orthogonality of the matrix (\ref{PhiLPhi}) and from (\ref{matrixPsi}), it follows that
\begin{align}
\nonumber
    \bA
    \left(
        \left|
            \Phi^{-1/2}
            \sum_{\ell = j+1}^{N}
            L^{-\ell}
            (E_{\ell}-\mu )
        \right|^4
    \right)
    = &
    \bA
    \left(
        \left|
            \sum_{\ell = j+1}^{N}
            J^{-\ell}
            \Phi^{-1/2}
            (E_{\ell}-\mu)
        \right|^4
    \right) \\
\nonumber
     = &
    (N-j)
    \int_{R^{-1}(0)}
    \left|
        \Phi^{-1/2}
        (u - \mu)
    \right|^4 \rd u \\
\nonumber
    & +
    (N-j) (N-j-1)
    (
        \Tr
        \Upsilon
    )^2 \\
\nonumber
    & +
    2
    \sum_{j+1 \< \ell \ne m \< N}
    \Tr
    \left(
        \Upsilon
        J^{\ell -m}
        \Upsilon
        J^{m-\ell}
    \right) \\
 \label{love03}
    & \<
    N
    \int_{R^{-1}(0)}
    \left|
        \Phi^{-1/2}
        (u - \mu)
    \right|^4 \rd u + N^2 K,
 \end{align}
where the parameter $ K := (\Tr  \Upsilon)^2 + 2 \Tr (\Upsilon^2) $ is associated with an  auxiliary matrix $ \Upsilon := \Phi^{-1/2} \Psi (\Phi^{-1/2})^{\rT} $. The relations (\ref{love01})--(\ref{love03}) lead to the inequality
\begin{equation}
\label{last}
    \limsup_{N \to +\infty}
    \max_{0 \< j \< N}
    \bF
    \left\{
        x \in \mZ^n:\
        \max_{j \< k \<  N}
        \left|
            W_{N, \frac{j}{N}}(x) - W_{N, \frac{k}{N}}(x)
        \right|
        \>
        \lambda
    \right\}
    \<
    \frac{2 K}{(\lambda -\sqrt{2})^4}
\end{equation}
which holds for any $\lambda > \sqrt{2}$. Since the right-hand side of (\ref{last}) is $o(\lambda^{-2})$ as $\lambda \to +\infty$, this inequality establishes (\ref{love00}), which, in view of the above discussion, implies the assertion of the theorem.
\end{proof}

\begin{theorem}\label{clt}
Suppose the matrix $L$ in (\ref{LUJU})--(\ref{Jk}) is iteratively nonresonant and has a nondegenerate spectrum. Then the distribution  of the map
 \begin{equation}
    \label{deltanormed}
        \frac{1}{\sqrt{N}}
        \Phi^{-1/2} \delta_N : \mZ^n \to \mR^n
 \end{equation}
 (where use is made of (\ref{deviation1}), (\ref{matrixPhi}) and
 (\ref{matrixPsi}))  weakly converges to the $n$-dimensional
 Gaussian measure with the zero mean vector and identity covariance
 matrix as  $N \to +\infty$.  More precisely, for any Jordan measurable set
 $B \subset \mR^n$,
\begin{equation}
    \label{love}
    \lim_{N \to +\infty}
    \bF
    \left\{
        x \in \mZ^n:\
        \frac{1}{\sqrt{N}}
        \Phi^{-1/2}
        \delta_N(x)
        \in B
    \right\}
    =
    (2\pi)^{-n/2}
    \int_{B}
    \re^{- \frac{|u|^2}{2}}
    \rd u .
\end{equation}
\end{theorem}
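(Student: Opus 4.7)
}
The plan is to reduce the statement to Lemma~\ref{flt} with $p=1$, $t_1 = 1$ by exploiting the algebraic relation between $\delta_N$ and $\xi_N$ together with the structure of the matrix $L$. From the linear bijection in (\ref{linbij}), $\delta_N = L^N \xi_N$, and from (\ref{PhiLPhi}) the similarity $\Phi^{-1/2} L \sqrt{\Phi} = J$ holds with $J := \blockdiag(J_1,\ldots, J_r)$ orthogonal. Since $\Phi^{-1/2}$ is defined as the inverse of $\sqrt{\Phi}$ (not as the symmetric square root), we have $\Phi^{-1/2}\sqrt{\Phi} = I_n$, so that $\Phi^{-1/2} L = J \Phi^{-1/2}$ and, by induction, $\Phi^{-1/2} L^N = J^N \Phi^{-1/2}$ for every $N \in \mN$. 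Therefore, recalling the definition (\ref{WNt}) at $t = 1$,
$$
    \frac{1}{\sqrt{N}} \Phi^{-1/2} \delta_N
    \; = \;
    \frac{1}{\sqrt{N}} J^N \Phi^{-1/2} \xi_N
    \; = \;
    J^N W_{N,1}.
$$

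The next step is to pass to the limit via characteristic functions, which handles the $N$-dependent orthogonal factor $J^N$ cleanly. Lemma~\ref{flt} (with $p=1$, $t_1 = 1$) asserts that $W_{N,1}$ is distributed with a probability measure $D_N$ on $\mR^n$ converging weakly to the $n$-dimensional standard Gaussian measure $D$. Denoting by $\phi_N$ the characteristic function of $D_N$ and by $\phi(u) := \re^{-|u|^2/2}$ the characteristic function of $D$, the map $\frac{1}{\sqrt{N}} \Phi^{-1/2} \delta_N$ is distributed (as a composition of the $\mes_{Nn}$-distributed map $\cE_N$ with a $\mes_{Nn}$-continuous linear transformation, via Lemma~\ref{mapcomposition} and Theorem~\ref{distrerrors}), and its characteristic function at $u \in \mR^n$ equals $\phi_N\big((J^N)^{\rT} u\big)$. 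Since $(J^N)^{\rT}$ is orthogonal for every $N$, the vector $(J^N)^{\rT} u$ lies on the sphere of radius $|u|$, and the uniform-on-compacts convergence $\phi_N \to \phi$ together with the rotational invariance of $\phi$ (namely $\phi((J^N)^{\rT} u) \to \phi(u)$ as $N \to +\infty$ since $|\phi_N(v) - \phi(v)|$ is small uniformly in $|v| = |u|$) yields pointwise convergence of the characteristic functions to $\phi$. By L\'evy's continuity theorem \cite{Shiryayev}, the distributions of $\frac{1}{\sqrt{N}} \Phi^{-1/2} \delta_N$ converge weakly to the standard Gaussian measure on $\mR^n$.

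Finally, the pointwise convergence of the probabilities in (\ref{love}) for every Jordan measurable set $B \subset \mR^n$ is obtained from the weak convergence just established together with the portmanteau theorem \cite{Billingsley}: the boundary $\partial B$ of a Jordan measurable set has zero $n$-dimensional Lebesgue measure and hence zero measure under the standard Gaussian distribution (which is absolutely continuous with respect to $\mes_n$), so $B$ is a continuity set of the limit measure, and the weak convergence specialises to the equality of the $\bF$-frequency on the left-hand side of (\ref{love}) and the Gaussian integral on the right-hand side. The only subtle point of the argument is the presence of the $N$-dependent orthogonal factor $J^N$; this is the step that would be the main obstacle if one tried to argue by direct continuous mapping, but it becomes routine once one passes to characteristic functions and invokes the rotational invariance of the Gaussian law.
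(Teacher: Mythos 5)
Your proof is correct, and it takes a genuinely different route from the paper's after the common algebraic reduction. Both proofs start identically by writing $\frac{1}{\sqrt{N}}\Phi^{-1/2}\delta_N = J^N W_{N,1}$ via (\ref{linbij}) and (\ref{PhiLPhi}), and both must cope with the $N$-dependent orthogonal factor $J^N$ (which rules out a naive continuous-mapping argument). The paper handles this by working with test functions $f_N := f\circ J^N$: these share a Lipschitz constant with $f$, so by Arzel\`a--Ascoli the restrictions $\{f_N|_{B_\eps}\}$ admit a finite $\eps$-net in $C(B_\eps,\mR)$, and after splitting $\int f_N\,\rd D_N$ into parts inside and outside $B_\eps$, the rotational invariance of the Gaussian $D$ removes the $J^N$-dependence; liminf/limsup bounds then give $\int f\,\rd(D_N\circ J^{-N}) \to \int f\,\rd D$. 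You instead work on the Fourier side: the characteristic function of $J^N W_{N,1}$ is $\phi_N((J^N)^{\rT}u)$, and since $(J^N)^{\rT}u$ ranges over the sphere $\{|v|=|u|\}$, uniform-on-compacts convergence $\phi_N\to\phi$ plus the rotational invariance of $\phi(u)=\re^{-|u|^2/2}$ gives pointwise convergence $\phi_N((J^N)^{\rT}u)\to\phi(u)$, whence L\'evy continuity and the portmanteau theorem finish the job. Your route is shorter and arguably more transparent, since it exploits the fact that the proof of Lemma~\ref{flt} already runs through characteristic functions. One point you gloss over: Lemma~\ref{flt}'s proof only exhibits \emph{pointwise} convergence $\phi_N(u)\to\re^{-|u|^2/2}$; you should note explicitly that this upgrades to uniform convergence on compact sets because the limit is continuous (a standard consequence of weak convergence $D_N\Rightarrow D$, e.g.\ via equicontinuity of $\{\phi_N\}$ on compacts, which follows from tightness). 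This is routine but is exactly the fact doing the work in your argument, so it deserves a sentence.
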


\begin{proof}
The linear bijection (\ref{linbij}) between the maps (\ref{deviation1}) and (\ref{deviation}) allows the map (\ref{deltanormed}) to be represented as
\begin{equation}
\label{clt1}
        \frac{1}{\sqrt{N}}
        \Phi^{-1/2}
        \delta_N
        =
        \frac{1}{\sqrt{N}}
        \Phi^{-1/2}
        L^N \xi_N
        = J^N W_{N,1},
\end{equation}
where use is made of (\ref{PhiLPhi}) and (\ref{WN1}). Therefore, the map in (\ref{deltanormed}) is $D_N \circ J^{-N}$-distributed, where $D_N$ is the distribution of the map $W_{N,1}$. Hence, for any Jordan measurable $B \subset \mR^n$, the frequency of the set on the left-hand side of (\ref{love}) coincides with $D_N(J^{-N} B)$. Therefore, the assertion of the theorem can be proved by establishing the weak convergence of the probability measure $D_N \circ J^{-N}$ to the   $n$-dimensional Gaussian measure $D$ with the zero mean vector and  identity covariance matrix as $N \to +\infty$.  For this purpose, we will use the weak convergence of $D_N$ to $D$ as $N\to +\infty$ (which was established in the proof of Lemma~\ref{flt}), and the invariance of the probability measure $D$ with respect to orthogonal transformations (since standard normal PDFs are isotropic).
With any given bounded Lipschitz continuous function $f: \mR^n \to \mR$, we associate the functions
\begin{equation}
\label{fN}
    f_N := f \circ J^N,
    \qquad
    N \in \mN.
\end{equation}
Due to orthogonality of the matrix $J$ in (\ref{PhiLPhi}), the functions $f_N$ have the same Lipschitz continuity constant as the function $f$. Therefore, in view of the Arzela-Ascoli theorem, for any $\eps > 0$, the restrictions of these functions to a ball $ B_{\eps} := \{ u \in \mR^n:\ |u| \< \frac{1}{\eps}\}$ form a totally bounded set
\begin{equation}
\label{fNBeps}
  \left\{
    \left.
        f_N
    \right|_{B_{\eps}}:\
    N \in \mN
    \right\}
\end{equation}
in the Banach space $C(B_{\eps},\mR)$ of continuous functions on the ball. Hence, there exist $N_{\eps} \in \mN$, a surjective map\ $ \nu_{\eps}:  \mN \to \{1, 2, \ldots, N_{\eps}\} $ and a finite $\eps$-net
\begin{equation}
\label{epsnet}
    \{
        f_k^{(\eps)}:\
        1 \< k \< N_{\eps}
    \}
    \subset
    C(B_{\eps},\mR)
\end{equation}
for the set (\ref{fNBeps}) satisfying
\begin{equation}
\label{setfromnet}
    \sup_{N \> 1,\ u \in B_{\eps}}
    \big|
        f_N(u)-f_{\nu_{\eps}(N)}^{(\eps)}(u)
    \big|
    \<
    \eps .
\end{equation}
Now, recalling (\ref{clt1}) and  (\ref{fN}), consider the  following integrals
\begin{align}
\nonumber
    \bA\left(        f\left(\frac{1}{\sqrt{N}}
        \Phi^{-1/2}
        \delta_N\right)
\right)
& =
    \int_{\mR^n}
    f(u) (D_N \circ J^{-N})(\rd u)\\
\label{fDJN}
    & =
    \int_{B_{\eps}}
    f_N(u) D_N(\rd u)
    +
    \int_{\mR^n \setminus B_{\eps}}
    f_N(u) D_N(\rd u)  .
\end{align}
In view of the weak
convergence of $D_N$ to $D$, the rightmost integral in
(\ref{fDJN}) can be bounded asymptotically as
\begin{equation}
\label{lab3}
    \limsup_{N \to +\infty}
    \left|
        \int_{\mR^n \setminus B_{\eps}}
        f_N(u) D_N(\rd u)
    \right|
    \<
    \left(
        1 -
        D(B_{\eps})
    \right)
    \|f\|,
\end{equation}
where $\|f\|:=     \sup_{u \in \mR^n} |f(u)|$ is the uniform norm of $f$.
A combination of the same weak convergence with  (\ref{setfromnet}) leads to
\begin{equation}\label{lab1}
    \liminf_{N \to +\infty}
    \int_{B_{\eps}}
    f_N(u) D_N(\rd u)
    \>
    \min_{1 \< k \< N_{\eps}}
    \int_{B_{\eps}}
    f_k^{(\eps)}(u) D(\rd u) - \eps .
\end{equation}
On the other hand, from (\ref{setfromnet}) and the rotational invariance of the Gaussian measure $D$, it follows that
\begin{equation}\label{lab2}
    \min_{1 \< k \< N_{\eps}}
    \int_{B_{\eps}}
    f_k^{(\eps)}(u)
    D(\rd u)
    \>
    \min_{N \> 1}
    \int_{B_{\eps}}
    f_N(u) D(\rd u) - \eps
    =
    \int_{B_{\eps}}
    f(u) D(\rd u) - \eps .
\end{equation}
A combination of (\ref{lab1}) with (\ref{lab2}) yields  $ \liminf_{N \to +\infty} \int_{B_{\eps}} f_N(u) D_N(\rd u) \> \int_{B_{\eps}} f(u) D(\rd u) - 2 \eps $. Hence, in view of  (\ref{lab3}), it follows that
$$
    \liminf_{N \to +\infty}
    \int_{\mR^n}
    f_N(u) D_N(\rd u)
    \>
    \int_{\mR^n}
    f(u) D(\rd u) - 2
    \Big(
        \eps +
        \left(
            1 - D(B_{\eps})
        \right)
        \|f\|
    \Big).
$$
The arbitrariness of $\eps > 0$ in the latter inequality and the property $\lim_{\eps \to +0} D(B_{\eps}) = 1$ imply that
$$
    \liminf_{N \to +\infty} \int_{\mR^n} f_N(u) D_N(\rd u) \> \int_{\mR^n} f(u) D(\rd u).
$$
A similar reasoning leads to the inequality
$$
    \limsup_{N \to +\infty} \int_{\mR^n} f_N(u) D_N(\rd u) \< \int_{\mR^n} f(u) D(\rd u).
$$
A combination of the last two inequalities leads to the following convergence for the left-hand side of (\ref{fDJN}):
\begin{equation}\label{fff}
    \lim_{N \to +\infty}
    \int_{\mR^n}
    f(u) (D_N \circ J^{-N})(\rd u) = \int_{\mR^n} f(u) D(\rd u).
\end{equation}
Since $f: \mR^n \to \mR$ is an arbitrary bounded Lipschitz continuous function, then, in view of the well-known criterion~\cite{Shiryayev} of the weak convergence of probability measures, (\ref{fff}) implies that $D_N \circ J^{-N}$ weakly converges to $D$ as $N \to +\infty$, whence the assertion of the theorem follows.
\end{proof}

\begin{theorem}
\label{fltcor}
Suppose the matrix $L$ in (\ref{LUJU})--(\ref{Jk}) is iteratively nonresonant and has a nondegenerate spectrum. Then for any Jordan
measurable set $B \subset \mR_+^r$,
\begin{align}
    \nonumber
    \lim_{N \to +\infty}
    \bF&
    \left\{
        x \in \mZ^n:\
        \sqrt{\frac{2}{N}}
        \left(
            \frac
            {\max_{1 \< k \< N}
            |
                V_j \delta_k(x)
            |}
            {\sigma_j}
        \right)_{1\< j \< r}
        \in
        B
    \right\} \\
    \label{love000}
    & =
    \int_{B}
    \prod_{k=1}^{r}
    \tau
    (
        u_k
    )
    du_1
    \x
    \ldots
    \x
    du_r,
\end{align}
where use is made of (\ref{deviation1}), (\ref{matrixPhi}) and (\ref{matrixPsi}), and $\tau(\cdot)$ denotes the PDF for the largest Euclidean deviation of the two-dimensional standard Wiener process from the origin over the time interval  $[0,1]$.
\end{theorem}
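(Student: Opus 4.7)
The approach is to rewrite the functional inside the frequency in (\ref{love000}) as a continuous functional of the rescaled piecewise-linear process $\wt{W}_N : \mZ^n \to C$ from Section~\ref{FCLTDPSQLSS} and then invoke the functional central limit theorem (Theorem~\ref{mainflt}). First, using the linear bijection (\ref{linbij}) and the spectral decomposition $L^k = \sum_{m=1}^r U_m J_m^k V_m$, the biorthogonality $V_j U_m = \delta_{jm} I_2$ (from (\ref{UkVk})) gives $V_j L^k = J_j^k V_j$, whence $|V_j \delta_k| = |V_j \xi_k|$ by orthogonality of $J_j$. Applying the same biorthogonality to the explicit square root (\ref{Phiroot}) yields $V_j \sqrt{\Phi} = (\sigma_j/\sqrt{2})\Pi_j$, where $\Pi_j \in \{0,1\}^{2\times n}$ is the projector onto the $j$-th pair of coordinates. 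Combining with $\xi_k = \sqrt{N}\sqrt{\Phi}\, W_{N,k/N}$ (from the definition (\ref{WNt}) at the lattice points) and writing $W_{N,t}^{(j)} := \Pi_j W_{N,t}$, I obtain $\sqrt{2/N}\,|V_j\delta_k|/\sigma_j = |W_{N,k/N}^{(j)}|$. Since $W_{N,t}^{(j)}$ is piecewise linear in $t$ and $|{\cdot}|$ is convex (so its maximum on every subinterval is attained at an endpoint), $\max_{1 \< k \< N}|W_{N,k/N}^{(j)}| = \max_{t\in[0,1]}|W_{N,t}^{(j)}|$. Hence the event in (\ref{love000}) equals $\wt{W}_N^{-1}(h^{-1}(B))$, where $h : C \to \mR_+^r$ is the $1$-Lipschitz continuous functional $f \mapsto (\max_{t\in[0,1]}|\Pi_j f(t)|)_{1\<j\<r}$.

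Second, I identify the Wiener-measure limit. Under $\cW$ on $(C,\cB_C)$ the blocks $\Pi_1 W, \ldots, \Pi_r W$ are mutually independent standard $2$-dimensional Brownian motions, since the $n$-dimensional identity covariance structure decouples along disjoint coordinate $2$-blocks. Consequently, $h_*\cW$ on $\mR_+^r$ is the product measure with density $\prod_{j=1}^r \tau(u_j)$ (each factor being the PDF of the running maximum of the Euclidean norm of a $2$-dimensional standard Brownian motion on $[0,1]$), so that the right-hand side of (\ref{love000}) is precisely $\cW(h^{-1}(B))$.

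It remains to pass to the limit via Theorem~\ref{mainflt}, which requires $h^{-1}(B) \in \wt{\cB}_C$. Continuity of $h$ yields $\partial h^{-1}(B) \subset h^{-1}(\partial B)$; the Jordan measurability of $B$ gives $\mes_r(\partial B) = 0$; and the absolute continuity of $\prod_j\tau$ then forces $\cW(h^{-1}(\partial B)) = (h_*\cW)(\partial B) = 0$. For the second condition defining $\wt{\cB}_C$, note that by Theorem~\ref{distrerrors} the vector $(E_1,\ldots,E_N)$ is uniformly distributed on the polytope $(R^{-1}(0))^N$, and the linear recursion (\ref{recursion}) turns this into an absolutely continuous joint distribution of $(\xi_k)_{1\<k\<N}$, hence of the vertices $(W_{N,k/N})_{0\<k\<N}$; the pushforward $h_*\wt{D}_N$ on $\mR_+^r$ is therefore absolutely continuous and vanishes on the Lebesgue-null set $\partial B$ for all $N \> N_0$. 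This confirms $h^{-1}(B) \in \wt{\cB}_C$, and Theorem~\ref{mainflt} then gives the asserted convergence (\ref{love000}).

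The algebraic reduction in the first step is routine once the spectral structure (\ref{LUJU})--(\ref{Jk}) and the explicit square root (\ref{Phiroot}) are on the table, and the identification of the limit is immediate from the block independence of standard Brownian motion. The only genuine technical point is the verification that $h^{-1}(B)$ lies in the restrictive algebra $\wt{\cB}_C$; the absolute-continuity argument for the pushforward measures $h_*\wt{D}_N$ is what needs careful justification, and if that turns out to be awkward it can be bypassed by sandwiching $B$ between open and closed Jordan approximations whose preimages under $h$ are automatically $\cW$-continuity sets and using the monotonicity of the frequency functional.
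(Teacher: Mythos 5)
Your proof follows essentially the same route as the paper's: reduce $\sqrt{2/N}\,\sigma_j^{-1}|V_j\delta_k|$ to $|\Pi_j W_{N,k/N}|$ via the explicit square root (\ref{Phiroot}) and the relation (\ref{PhiLPhi}), observe that piecewise linearity of $t\mapsto W_{N,t}$ lets the discrete maximum be replaced by $\max_{t\in[0,1]}$, compose with the Lipschitz functional $\phi(f)=(\max_t|\Pi_j f(t)|)_j$, and invoke Theorem~\ref{mainflt} together with the independence of the $r$ two-dimensional coordinate blocks of the standard Wiener process. The one place you go further is in actually verifying that $\phi^{-1}(B)\in\wt{\cB}_C$ (both $\cW$-continuity and $\wt{D}_N$-continuity for large $N$); the paper asserts this membership without elaboration, so your supplementary argument is a correct and welcome addition rather than a deviation.
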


\begin{proof}
By using (\ref{Phiroot}) and (\ref{PhiLPhi}), it follows that,
for any $1 \< j \< r$ and  $1 \< k \< N$,
\begin{equation}\label{love001}
    \sqrt{\frac{2}{N}}
    \sigma_j^{-1}
    V_j
    \delta_k =
    J_j^k
    \Pi_j
    W_{N, \frac{k}{N}},
\end{equation}
where
\begin{equation}\label{Pij}
    \Pi_j :=
        \begin{bmatrix}
        0_{2 \x 2(j-1)} & I_2 & 0_{2\x (n-2j)}
        \end{bmatrix}
    \in
    \mR^{2 \x n}
\end{equation}
with $0_{p \x q}$ denoting the zero $(p  \x q)$-matrix. Since each of the rotation matrices $J_j$ is orthogonal, (\ref{love001})  implies that
\begin{equation}\label{love002}
    \sqrt{\frac{2}{N}}
    \sigma_j^{-1}
    |
        V_j
        \delta_k
    | =
    \left|
        \Pi_j
        W_{N, \frac{k}{N}}
    \right| .
\end{equation}
Note that the vector $W_{N,t}(x)$ in (\ref{WNt}) depends on the auxiliary time variable $t \in [0,1]$ in a continuous   piece-wise linear fashion (it is a linear function of $t \in [(k-1)/N, k/N]$ for any $1 \< k \< N$). Therefore,  it follows  from (\ref{love002}) that
\begin{equation}
\label{love003}
    \sqrt{\frac{2}{N}}\,
    \frac
    {\max_{1 \< k \< N}
    |
        V_j
        \delta_k
    |}
    {\sigma_j} =
    \max_{t \in [0,1]}
    |
        \Pi_j
        W_{N,t}
    | .
\end{equation}
Now, consider the vector-valued map
\begin{equation}\label{alphaN}
    \alpha_N
    :=
    \sqrt{\frac{2}{N}}
    \left(
        \frac
        {\max_{1 \< k  \< N}
        |
            V_j
            \delta_k
        |}
        {\sigma_j}
    \right)_{1\< j \< r} =
    \left(
        \max_{t \in [0,1]}
        |
            \Pi_j W_{N,t}
        |
    \right)_{1 \< j \< r}:
    \mZ^n \to \mR_+^r
 \end{equation}
whose entries are described by (\ref{love003}). For any $N \in \mN$, this map is representable as the composition $\alpha_N = \phi \circ \wt{W}_N$ of the map $\wt{W}_N: \mZ^n \to C$ defined above and a Lipschitz continuous map  $\phi: C \to \mR^r_+$ on the space (\ref{spaceC0}) given by
\begin{equation}
\label{mapphif}
    \phi(f)
    :=
    \left(
        \max_{t \in [0,1]}
        |
            \Pi_j f(t)
        |
    \right)_{1 \< j \< r} .
 \end{equation}
The  preimage $\phi^{-1}(B)$ of any Jordan measurable set
$B \subset \mR^r_+$ belongs to the algebra (\ref{tildBC0}), and
hence, application of Theorem~\ref{mainflt} leads to  $
    \lim_{N \to +\infty}
    \bF
    (
        \alpha_N^{-1}(B)
    ) =
    \cW(\phi^{-1}(B))
$. The latter convergence implies (\ref{love000}) in view of (\ref{Pij}), (\ref{alphaN}) and (\ref{mapphif}). Here, we have also used the property that
 the projections of the $2r$-dimensional standard Wiener process onto
$r$ pair-wise orthogonal two-dimensional subspaces are mutually
independent two-dimensional standard Wiener processes).
\end{proof}

\subsection{An application to the rounded-off planar rotations}
\label{ERPR}

We will now apply the above results to a dynamical system on the two-dimensional lattice $\mZ^2$, with which a celebrated problem on the rounded-off planar rotations \cite{Diamond1,Kuznetsov} is concerned. More precisely, consider the quantized linear $(R_*, L)$-system, where $R_*: \mR^2 \to \mZ^2$ is the roundoff quantizer with
\begin{equation}\label{invRstar}
    R_*^{-1}(0) =
    [-1/2, 1/2)^2
\end{equation}
(see Section~\ref{DQLS}),  and
\begin{equation}\label{rot}
    L :=
        \begin{bmatrix}
        \cos \theta & -\sin \theta \\
        \sin \theta & \cos \theta
        \end{bmatrix}
\end{equation}
is the matrix of rotation by angle $\theta \in (0,2\pi)$. This  dynamical system, whose transition operator is given by
$$
    T = R_*\circ L,
$$
is a particular case of the neutral quantized linear systems described in Section \ref{CQLSBC}. Indeed, the matrix $L$ in (\ref{rot}) is of the form (\ref{LUJU})--(\ref{Jk}), with $r = 1$ and $U = I_2$. Note that the assumption for the matrix $L$ to have a nondegenerate spectrum is equivalent to  $\theta \ne \pi$ which holds, for example, if $\theta \in (0, \pi/2)$.
Furthermore, it is assumed throughout this subsection that the rotation angle $\theta$ belongs to the set
\begin{equation}\label{setTheta}
    \Theta
    :=
    \big\{
        \theta \in (0,\pi/2):\
        {\rm the\ matrix}\ L\ {\rm in}\ (\ref{rot})\
        {\rm is\ iteratively\ nonresonant}
    \big\};
\end{equation}
see Section~\ref{PFFA}. As discussed in Section~\ref{CQLSBC}, the set $\Theta$ is of full Lebesgue measure, and moreover, the set $(0, \pi/2) \setminus \Theta$ of ``pathological'' values of the rotation angle is countable. Also, $\theta /\pi$ is irrational for any $\theta \in \Theta$.

The following theorem provides corollaries from the corresponding results of the previous subsection and Section~\ref{QLSGC}. For its formulation, we note that, in view of (\ref{invRstar}) and (\ref{rot}), the mean vector $\mu$ in (\ref{vectormu}) vanishes,
$$
    \mu = \int_{[-1/2, 1/2)^2} u \rd u = 0,
$$
and hence, the supporting system with the transition operator in (\ref{Tstar}) coincides with the original linear system. Also note that the matrix $\Phi$ in (\ref{matrixPhi}) and the covariance matrix $\Psi$ in (\ref{matrixPsi}) for the uniform distribution over the square in (\ref{invRstar}) take the form \begin{equation}\label{PhiPsi}
    \Phi = \Psi =
    \int_{[-1/2, 1/2)^2}
    u u^{\rT} \rd u =
    \frac{I_2}{12}.
\end{equation}

\begin{theorem}\label{cor1}
Suppose the rotation angle $\theta$ belongs to the set $\Theta$ in
(\ref{setTheta}).  Then the quantized linear $(R_*,
L)$-system, specified by (\ref{invRstar}) and (\ref{rot}), satisfies the following properties:
\begin{itemize}
\item[{\bf (a)}]
the quantization  errors $E_k$ given by (\ref{kthsteperror})
are mutually independent and uniformly distributed over the square
$[-1/2, 1/2)^2$. In  particular,
$$
    \bF
    \left(
        \bigcap_{k=1}^{N}
        E_k^{-1}(B_k)
    \right) =
    \prod_{k=1}^{N}
    \mes_2 B_k
$$
holds
 for any $N \in \mN$ and any
Jordan measurable subsets $B_1, \ldots, B_N$ of the
square;
\item[{\bf (b)}]
for any Jordan measurable set $B \subset \mR^2$, \begin{equation}\label{lim01}
    \lim_{N \to  +\infty}
    \bF
    \left\{
        x \in \mZ^2:\
        \sqrt{\frac{12}{N}}
        (
            L^N x - T^N(x)
        )
        \in
        B
    \right\} =
    \frac
    {1}
    {2\pi}
    \int_{B}
    \re^{-\frac{|u|^2}{2}}
    \rd u.
\end{equation}
In particular, for  any $\alpha \> 0$,
\begin{equation}\label{lim02}
    \lim_{N \to +\infty}
    \bF
    \left\{
        x \in \mZ^2:\
        \sqrt
        {\frac{12}{N}}
        |
            L^N x - T^N(x)
        | >
        \alpha
    \right\}
    =
    \re^{-\frac{\alpha^2}{2}};
\end{equation}
\item[{\bf (c)}]
for any $\alpha \> 0$,
$$
    \lim_{N \to +\infty}
    \bF
    \left\{
        x \in \mZ^2:\
        \sqrt
        {\frac{12}{N}}
        \max_{1 \< k \< N}
        |
            L^k x - T^k(x)
        | >
        \alpha
    \right\} =
    \int_{\alpha}^{+\infty}
    \tau(u)
    \rd u,
 $$
where $\tau:\mR_+ \to \mR_+$ is the PDF of the largest Euclidean deviation of the two-dimensional standard Wiener process from the origin over the time interval $[0,1]$;

\item[{\bf (d)}]
the cardinality of the preimage  of a point under the
transition operator $T$ takes values $0, 1, 2$ with the following frequencies
\begin{equation}\label{freq03}
    \bF
    \left\{
        x \in \mZ^2:\
        \# T^{-1}(x) = k
    \right\} =
    \left\{
        \begin{matrix}
        \beta    & {\rm for} & k = 0\\
        1-2\beta & {\rm for} & k = 1\\
        \beta    & {\rm for} & k = 2\\
        \end{matrix}
    \right.,
\end{equation}
where
\begin{equation}\label{beta00}
    \beta :=
    1-
    \bF
    (
        T
        (
            \mZ^2
        )
    ) =
    (
        \cos \theta + \sin \theta -1
    )^2
\end{equation}
is the frequency of ``holes'' in the  lattice $\mZ^2$ which are not reachable for the system.
\end{itemize}
\end{theorem}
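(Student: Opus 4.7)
Parts (a)--(c) are specializations of the general results of Sections~\ref{IUDQE}--\ref{FCLTDPSQLSS} to the two-dimensional rotation setup, and part (d) requires an explicit geometric analysis of the overlap pattern of rotated unit squares on the lattice.

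For part (a), $\theta \in \Theta$ makes $L$ iteratively nonresonant, so Theorem~\ref{distrerrors} applied to $R_*$ with $R_*^{-1}(0) = [-1/2, 1/2)^2$ yields mutual independence and uniform distribution of the $E_k$, and the product formula follows from $\bigcap_{k=1}^N E_k^{-1}(B_k) = \cE_N^{-1}(B_1 \x \cdots \x B_N)$. For part (b), the central symmetry of $[-1/2, 1/2)^2$ forces $\mu = 0$, so $T_* = L$ and $\delta_N(x) = L^N x - T^N(x)$; from (\ref{PhiPsi}), $\Phi^{-1/2} = \sqrt{12}\,I_2$, whence Theorem~\ref{clt} directly yields (\ref{lim01}). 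Setting $B = \{u \in \mR^2 : |u| > \alpha\}$ and integrating the standard Gaussian density in polar coordinates gives $\re^{-\alpha^2/2}$, proving (\ref{lim02}). For part (c), Theorem~\ref{fltcor} applies with $r = 1$ and $U_1 = V_1 = I_2$, so $\sigma_1^2 = \Tr \Psi = \Tr(I_2/12) = 1/6$; the normalization $\sqrt{2/N}/\sigma_1 = \sqrt{12/N}$ thus matches and the convergence follows by taking $B = (\alpha, +\infty)$.

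For part (d), set $c := \cos\theta$, $s := \sin\theta$ and $K := L^{-1}[-1/2, 1/2)^2$, a unit-area rotated square. Since $T^{-1}(x) = \mZ^2 \cap (L^{-1}x + K)$, I first prove $\#T^{-1}(x) \leq 2$: distinct preimages differ by lattice vectors in $K - K$, whose rotated-frame form is essentially $[-1, 1]^2$, and the integer lattice points $v$ with $Lv \in [-1, 1]^2$ are exactly $\{0, \pm e_1, \pm e_2\}$, because $Le_1 = (c, s)$ and $Le_2 = (-s, c)$ lie inside but $L(\pm e_1 \pm e_2)$ and $L(\pm 2e_k)$ each possess a coordinate of modulus at least $c + s > 1$. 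Three distinct preimages would yield three pairwise differences in $\{\pm e_1, \pm e_2\}$, but the difference of any two distinct elements of this set lies in $\{\pm 2e_k, \pm e_1 \pm e_2\}$, hence outside $K - K$, contradicting the identity $y_3 - y_2 = (y_3 - y_1) - (y_2 - y_1)$.

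To compute $\beta$, note that $\mu = 0$ gives $\wt{R}_* = R_*$ via (\ref{comquant}), so Theorem~\ref{holes} with $N = 1$ together with (\ref{FTZ})--(\ref{Del00}) yields
\begin{equation*}
    \beta
    =
    \mes_2
    \{
        u \in [-1/2, 1/2)^2 :
        \mZ^2 \cap (u + K) = \emptyset
    \}.
\end{equation*}
Introduce the $\mZ^2$-periodic multiplicity $m(y) := \#\{z \in \mZ^2 : y - z \in K\}$, which satisfies $\int_{[-1/2, 1/2)^2} m\,\rd y = \mes_2 K = 1$. A measure-preserving reflection on the torus $\mR^2/\mZ^2$ (using $\mes_2(K \triangle (-K)) = 0$) identifies $\beta$ with $\mes_2\{y \in [-1/2, 1/2)^2 : m(y) = 0\}$, and the bound $m \leq 2$ (coming from $\#T^{-1} \leq 2$) combined with $\sum_k \mes_2\{m = k\} = 1 = \sum_k k\,\mes_2\{m = k\}$ forces $\mes_2\{m = 0\} = \mes_2\{m = 2\}$. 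A Fubini argument using the $\mZ^2$-tiling by $[-1/2, 1/2)^2$ then gives
\begin{equation*}
    2\beta
    =
    \int_{[-1/2, 1/2)^2} m(m-1)\,\rd y
    =
    \sum_{z \neq 0}
    \mes_2(K \cap (K + z)).
\end{equation*}
Only $z \in \{\pm e_1, \pm e_2\}$ contribute (the bound $c + s > 1$ excludes other $z$, as above), and in the rotated frame each $K \cap (K + e_k)$ is an axis-aligned rectangle of dimensions $(1-c) \times (1-s)$, so $2\beta = 4(1-c)(1-s)$; expansion yields $\beta = (c + s - 1)^2$, establishing (\ref{beta00}). Finally, Theorem~\ref{martingale} gives $\bA(\nu_1) = |\det L|^{-1} = 1$, i.e.\ $\bF\{\nu_1 = 1\} + 2\bF\{\nu_1 = 2\} = 1$, which combined with $\sum_{k=0}^{2} \bF\{\nu_1 = k\} = 1$ and $\bF\{\nu_1 = 0\} = \beta$ produces (\ref{freq03}). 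The main obstacle is the overlap computation: identifying the four axis-neighbors as the sole contributors and reducing each intersection to a simple rectangle in the rotated frame.
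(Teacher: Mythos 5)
Your parts (a)--(c) follow the paper's proof exactly: (a) is read off from Theorem~\ref{distrerrors}, (b) from Theorem~\ref{clt} together with the observation that $\mu=0$ and $\Phi=I_2/12$ (so $\Phi^{-1/2}=\sqrt{12}\,I_2$) and a polar-coordinate integration for~(\ref{lim02}), and (c) from Theorem~\ref{fltcor} with $r=1$, $U_1=V_1=I_2$, $\sigma_1^2=\Tr\Psi=1/6$.

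Part (d) is where you take a genuinely different route, and a more self-contained one. The paper derives the identity~(\ref{fr04}) from Theorems~\ref{markovbasin} and~\ref{holes} via~(\ref{DelBA}), (\ref{FTZ}) and~(\ref{Del00}), but then leaves the actual evaluation of the resulting area and the partition of $[-1/2,1/2)^2$ into the regions $\{n=0\},\{n=1\},\{n=2\}$ to the picture in Fig.~\ref{fig3}; in particular, the paper never explicitly proves $\# T^{-1}(x)\le 2$. You replace the picture with three algebraic ingredients: (i) a clean lattice argument that the only integer vectors in $K-K$ are $\{0,\pm e_1,\pm e_2\}$, giving $\# T^{-1}\le 2$; (ii) the two moment identities $\int m=1$ and $\int m(m-1)=\sum_{z\neq 0}\mes_2\bigl(K\cap(K+z)\bigr)$, which together with $m\le 2$ force $a_0=a_2=\beta$ and $2\beta=4(1-\cos\theta)(1-\sin\theta)$; and (iii) Theorem~\ref{martingale} for $\bA(\nu_1)=|\det L|^{-1}=1$, which is a structural alternative to reading the three measures off the figure. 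This is a rigorous and arguably cleaner derivation than the paper's pictorial one, and it makes explicit why exactly the axis neighbours $\pm e_1,\pm e_2$ contribute. Two very minor quibbles: the coordinate of $L(\pm 2e_k)$ that escapes $(-1,1)$ has modulus at least $\sqrt{2}$, not necessarily $c+s$ (the conclusion is unaffected, since $\max(2c,2s)\ge\sqrt2>1$ always); and the reflection argument identifying $\beta$ with $\mes_2\{m=0\}$ does not actually need $\mes_2(K\triangle(-K))=0$ --- the substitution $u\mapsto -u$ already shows $n(u)=m(-u)$, and negation is measure-preserving on the torus regardless of the shape of~$K$.
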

\begin{proof}
The assertion (a) of the theorem is a corollary from
Theorem~\ref{distrerrors}. The assertion (b) is established by noting
that (\ref{lim01}) follows from (\ref{PhiPsi})  and
Theorem~\ref{clt}.  The relation (\ref{lim02}) can be
obtained from (\ref{lim01}) by using the fact that
the squared Euclidean norm of a two-dimensional  Gaussian random
vector with zero mean and identity covariance matrix has
the $\chi^2$-distribution with two degrees of freedom:
$$
    \frac{1}{2\pi}
    \int_{u \in \mR^2:\ |u|^2 > \alpha}
    \re^{
        -\frac{|u|^2}{2}
    }
    du  =
    \re^{
        -\frac{\alpha}{2}
    }
    \quad
    {\rm for\ all}\
    \alpha \> 0.
$$
The assertion (c) follows from Theorem~\ref{fltcor}. The
assertion (d) is a  corollary from
Theorems~\ref{markovbasin} and~\ref{holes}. Indeed, by using
(\ref{DelBA}), it follows that
\begin{align}
    \nonumber
    \bF &
    \left\{
        x \in \mZ^2:\
        \# T^{-1}(x) = k
    \right\} \\
    \label{fr04}
    & =
    \mes_2
    \left\{
        u \in [-1/2, 1/2)^2:\
        \#
        \left(
            \mZ^2
            \bigcap
            (
                u + L^{-1} [-1/2, 1/2)^2
            )
        \right)
        = k
    \right\}.
\end{align}
In combination with (\ref{FTZ}) and (\ref{Del00}) (see also Fig.~\ref{fig3}), the relation (\ref{fr04}) leads to
(\ref{freq03}) and (\ref{beta00}). \end{proof}

\begin{figure}[htbp]
\begin{center}
\unitlength=1.5mm
\linethickness{0.4pt}
\begin{picture}(55.00,50.00)

\put(5.00,20.00){\line(3,4){9.00}}
\put(5.00,20.00){\line(4,-3){12.00}}
\put(17.00,11.00){\line(3,4){9.00}}
\put(14.00,32.00){\line(4,-3){12.00}}
\put(15.50,21.50){\circle*{1.00}}
\put(13.50,19.50){\makebox(0,0)[cc]{$_0$}}
\put(15.50,21.50){\line(1,0){15.00}}
\put(15.50,21.50){\line(0,1){15.00}}

\put(20.00,20.00){\line(3,4){9.00}}
\put(20.00,20.00){\line(4,-3){12.00}}
\put(32.00,11.00){\line(3,4){9.00}}
\put(29.00,32.00){\line(4,-3){12.00}}
\put(30.50,21.50){\circle*{1.00}}
\put(30.50,19.50){\makebox(0,0)[cc]{$_1$}}
\put(5.00,35.00){\line(3,4){9.00}}
\put(5.00,35.00){\line(4,-3){12.00}}
\put(17.00,26.00){\line(3,4){9.00}}
\put(14.00,47.00){\line(4,-3){12.00}}
\put(15.50,36.50){\circle*{1.00}}
\put(13.50,36.50){\makebox(0,0)[cc]{$_1$}}

\put(20.00,35.00){\line(3,4){9.00}}
\put(20.00,35.00){\line(4,-3){12.00}}
\put(32.00,26.00){\line(3,4){9.00}}
\put(29.00,47.00){\line(4,-3){12.00}}
\put(30.50,36.50){\circle*{1.00}}
\put(30.50,36.50){\line(-1,0){15.00}}
\put(30.50,36.50){\line(0,-1){15.00}}

\put(5.00,36.00){\line(0,1){14.00}}
\put(9.50,48.00){\vector(-1,0){4.00}}
\put(9.50,48.00){\vector(4,-3){3.00}}
\put(10.50,51.00){\makebox(0,0)[cc]{$\theta$}}

\put(31.30,20.90){\line(4,-3){13.00}}
\put(32.80,10.40){\line(4,-3){7.00}}
\put(31.40,10.20){\line(-3,-4){5.50}}
\put(29.90,20.70){\line(-3,-4){10.00}}
\put(38.40,8.70){\vector(3,4){3.00}}
\put(40.50,11.50){\vector(-3,-4){3.00}}
\put(41.50,8.50){\makebox(0,0)[cc]{$\frac{1}{2}$}}
\put(24.60,8.30){\vector(4,-3){3.00}}
\put(25.40,7.70){\vector(-4,3){3.00}}
\put(23.50,6.00){\makebox(0,0)[cc]{$\frac{1}{2}$}}
\put(23,21.50){\vector(1,0){21}}
\put(15.50,29){\vector(0,1){21}}
\put(23,21.50){\line(-1,0){21}}
\put(15.50,29){\line(0,-1){21}}

\multiput(18.55,27.9)(0.4,-0.3){13}{$\cdot$}%
\multiput(18.85,28.3)(0.4,-0.3){13}{$\cdot$}%
\multiput(19.15,28.7)(0.4,-0.3){13}{$\cdot$}%
\multiput(19.45,29.1)(0.4,-0.3){13}{$\cdot$}%
\multiput(19.75,29.5)(0.4,-0.3){13}{$\cdot$}%
\multiput(20.05,29.9)(0.4,-0.3){13}{$\cdot$}%
\multiput(20.35,30.3)(0.4,-0.3){13}{$\cdot$}%
\multiput(20.65,30.7)(0.4,-0.3){13}{$\cdot$}%
\multiput(20.95,31.1)(0.4,-0.3){13}{$\cdot$}%
\multiput(21.25,31.5)(0.4,-0.3){13}{$\cdot$}%
\multiput(21.55,31.9)(0.4,-0.3){13}{$\cdot$}%
\multiput(21.85,32.3)(0.4,-0.3){13}{$\cdot$}%
\end{picture}
\end{center}\vskip-10mm
\caption{Calculation of the frequency $\bF(\mZ^2 \setminus T(\mZ^2))$ of ``holes'' in the two-dimensional lattice $\mZ^2$ which are not reachable for the rounded-off planar rotation with a generic angle $\theta$. This frequency is equal to the area $(\cos\theta+\sin\theta-1)^2$ of the shaded  square.} \label{fig3}
\end{figure}
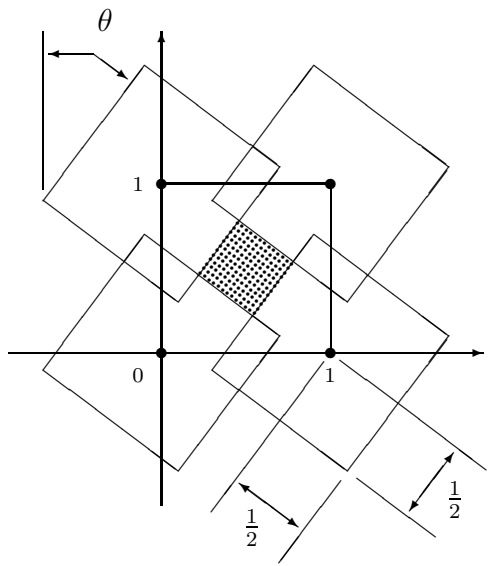

Unlike the statements (a)--(c) of Theorem~\ref{cor1}, its assertion (d) involves only the first iterate of the transition operator $T$ and  remains valid for nonresonant (that is, not necessarily \emph{iteratively} nonresonant) matrices $L$ in (\ref{rot}); see Section~\ref{DDNSQLSS}. Since for the orthogonal $(2\x2)$-matrices, the nonresonance property is equivalent to irrationality of $\re^{i\theta}$, then Theorem~\ref{cor1}(d) holds, in particular, for $\theta = \frac{\pi}{6}$. In this case, the rational independence of the rows of the corresponding matrix
$$
    \begin{bmatrix}
        I_2\\
        L
    \end{bmatrix}
    =
    \begin{bmatrix}
        1 & 0\\
        0 & 1\\
        \frac{\sqrt{3}}{2} & -\frac{1}{2}\\
        \frac{1}{2} & \frac{\sqrt{3}}{2}
    \end{bmatrix}
$$
can also be verified directly. For this rotation angle, a fragment of the reachability set $T(\mZ^2)$ is shown in  Fig.~\ref{fig4}.
\begin{figure}[htbp]
\begin{center}
\includegraphics[width=9cm]{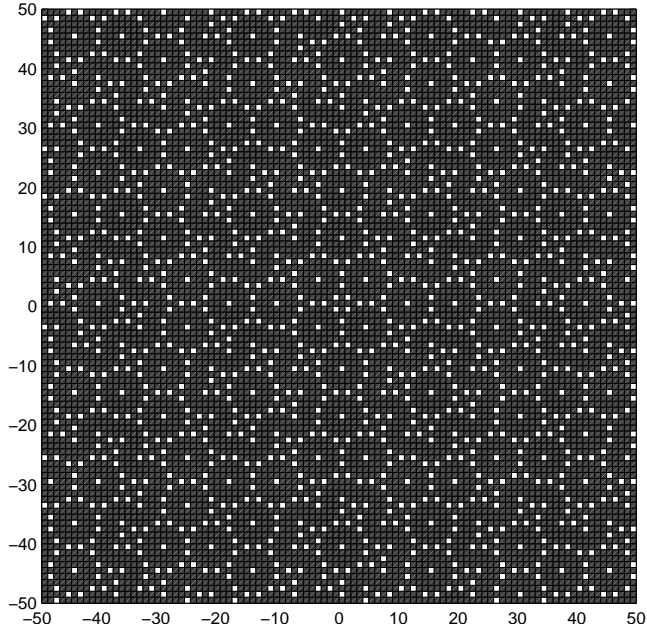}
\end{center}\vskip-4mm
\caption{A fragment $\{-50,\ldots, 50\}^2$ of the lattice $\mZ^2$, with the grey pixels depicting the
image $T(\mZ^2)$ of the lattice under the rounded-off planar rotation with angle $\theta = \frac{\pi}{6}$. The white pixels represent the
complement $\mZ^2 \setminus T(\mZ^2)$ consisting of ``holes'' in the lattice which are not reachable for the discretized system.} \label{fig4}
\end{figure}
This is an example of a frequency measurable $L^{-1}$-quasiperiodic subset of the lattice, which is not spatially periodic. According to (\ref{beta00}) of Theorem~\ref{cor1}(d),   the frequency of the reachability set is
\begin{equation}
\label{theory}
    \bF(T(\mZ^2))
    =
    1-
    \left(
        \cos \frac{\pi}{6} + \sin \frac{\pi}{6} -1
    \right)^2
    =
    \frac{\sqrt{3}}{2} = 0.8660...
\end{equation}
The relative fraction of reachable points in the moderately large fragment of the lattice under consideration is $0.8659...$, with the relative error of the theoretical prediction in (\ref{theory}) being $0.015\%$.

Also note that, by Theorem~\ref{cor1}(d), the frequency $\beta$  of ``holes'' for the rounded-off planar rotation in (\ref{beta00})  asymptotically achieves its largest value $ (\sqrt{2}-1)^2 = 0.1716...$ for generic rotation angles $\theta$ approaching $\frac{\pi}{4}$. For such rotation angles, the transition operator $T$ manifests ``minimal surjectivity''.

A comparison of the  above discussed
theoretical predictions with experimental results can also be found in \cite{Kozyakin}, where the relative fraction of points in sufficiently large rectangular fragments of $\mZ^2$ were calculated (along with their frequencies) for other events  relevant to the phase portraits of the rounded-off planar rotations. That comparison also demonstrates close proximity of the numerical experiment and the predictions provided by the frequency-based approach.

\section*{Acknowledgements}

The author is grateful to Victor S. Kozyakin, Alexei V. Pokrovskii, Eugene A. Asarin and Nikolai A. Kuznetsov for inspiring discussions on spatially  discretized dynamics in 1994--1996. Insightful discussions of the results of this report in November 1996   at Alexander Yu. Veretennikov's research seminar at the Institute for Information Transmission Problems, the Russian Academy of Sciences, are deeply appreciated. Linguistic corrections in the present version of the report have benefited from Anna I. Vladimirov's comments which  are also gratefully acknowledged.


\end{document}